\theoremstyle{plain}
\newtheorem{theorem}{Theorem}[section]
\newtheorem{thm}[theorem]{Theorem}
\newtheorem{cor}[theorem]{Corollary}
\newtheorem{prop}[theorem]{Proposition}
\newtheorem{lem}[theorem]{Lemma}
\newtheorem{obs}[theorem]{Observation}
\theoremstyle{definition}
\newtheorem{rem}[theorem]{Remark}
\newtheorem{defn}[theorem]{Definition}
\newtheorem{eg}[theorem]{Example}
\newtheorem{egs}[theorem]{Examples}
\newtheorem{ques}[theorem]{Question}
\newcommand{\bE}{{\mathbb{E}}}
\newcommand{\bF}{{\mathbb{F}}}
\newcommand{\bN}{{\mathbb{N}}}
\newcommand{\bZ}{{\mathbb{Z}}}
\newcommand{\bm}{{\mathbf{m}}}
\newcommand{\bn}{{\mathbf{n}}}
\newcommand{\bl}{{\mathbf{l}}}
\newcommand{\fk}{{\mathfrak{k}}}
\newcommand{\fs}{{\mathfrak{s}}}
\newcommand{\ft}{{\mathfrak{t}}}
\newcommand{\fu}{{\mathfrak{u}}}
\newcommand{\ep}{\varepsilon}
\renewcommand{\phi}{\varphi}
\newcommand{\upchi}{{\raise.35ex\hbox{\ensuremath{\chi}}}}
\newcommand{\qforal}{\quad\text{for all}\quad}
\newcommand{\dom}{\operatorname{dom}}
\newcommand{\codom}{\operatorname{codom}}
\newcommand{\id}{{\operatorname{id}}}
\newcommand{\im}{\operatorname{Im}}
\newcommand{\Fth}{\mathbb{F}_\theta^+}
\newcommand{\mt}{\varnothing}
\newcommand{\ol}{\overline}
\newcommand{\ul}{\underline}
\newcommand{\sym}{\rm Sym}
\begin{document}
\title[$k$-graphs $\&$ the YBE]
{The Interplay between $k$-graphs and \\ the Yang-Baxter Equation}
\author[D. Yang]
{Dilian Yang}
\address{Dilian Yang,
Department of Mathematics $\&$ Statistics, University of Windsor, Windsor, ON
N9B 3P4, CANADA} \email{dyang@uwindsor.ca}

\begin{abstract}
In this paper, we initiate the study of the interplay between $k$-graphs and the Yang-Baxter equation. 
For this, we provide two very different perspectives. 
One one hand, we show that the set of all set-theoretic solutions of the Yang-Baxter equation is a special class of 
single-vertex $k$-graphs. As a consequence, we construct an infinite family of large solutions of the 
Yang-Baxter equation from an arbitrarily given one. On the other hand, we prove that all single-vertex $k$-graphs are 
YB-semigroups of square-free, involutive solutions of the Yang-Baxter equation. 
Other various connections are also investigated. 

\end{abstract}

\subjclass[2010]{Primary 16T25, 81R50; Secondary 20F36, 18B40.}
\keywords{Yang-Baxter equation, $k$-graph, set-theoretic solution}
\thanks{The author was partially supported by an NSERC Discovery grant.}

\date{}
\maketitle

\section{Introduction}
\label{Intr}

The main aim of this paper is to initiate the study of the interplay between $k$-graphs and the Yang-Baxter equation,
both of which are very active research areas.  These two areas at first sight seem unrelated. 
However, the interplay between them turns out to be powerful, fascinating, and certainly deserves more attention.  
It should be mentioned that their link already appeared in several places in disguise. See, for example, \cite{FS01, RSY03, RS05, DY09}. 
Unfortunately, it seems that they have never been explored in detail according to my best knowledge. 
This paper can be thought of the algebraic theory of the interplay. The analytic theory will be studied elsewhere.

The Yang-Baxter equation was first introduced by C. N. Yang in the field of statistical mechanics \cite{Yan67}. It turns out to be also very 
useful in many other areas, such as quantum groups, link invariants, and the conformal field theory. Let $V$ be a vector space over a field $K$. A linear 
automorphism $R$ of $V\otimes V$ is called a \textit{solution of the Yang-Baxter equation} if 
\begin{align*}
r_{12}r_{23}r_{12}=r_{23}r_{12}r_{23}.
\end{align*} 
Here $r_{12}$ and $r_{23}$ are the leg notation: $r_{12}=r\otimes \id_V$ and $r_{23}=\id_V\otimes r$. It is generally a rather challenging task to find  all 
solutions of the Yang-Baxter equation. So V. G. Drinfeld suggested to find a class of solutions in \cite{Dri92},
which are known as set-theoretic solutions. Let $X$ be a non-empty set. A \textit{set-theoretic solution on $X$} is a bijection $R: X\times X\to X\times X$ satisfying 
\begin{align*}
R_{12}R_{23}R_{12}=R_{23}R_{12}R_{23}.
\end{align*} 
Obviously, every set-theoretic solution $R$ on $X$ determines a solution $r$ of the Yang-Baxter equation on the $K$-vector space spanned by $X$. 
Recently, there have been a lot of works, which deal with set-theoretic solutions. See, for example, \cite{CES04, ESS, GC, GM, GM2, GV98, LYZ, S} to name just a few, and the references therein. 

The study of $k$-graphs is relatively new. $k$-graphs were introduced by Kumjian and Pask in \cite{KP00}.
Their motivation was to generalize both directed graph C*-algebras (refer to \cite{Rae05} for the related references) 
and the Cuntz-Krieger algebras investigated by Robertson-Steger \cite{RS96, RS99},
via studying $k$-graph C*-algebras. 
A \textit{$k$-graph} is a countable small category $\Lambda$ with a degree map $d:\Lambda\to \bN^k$ satisfying a certain factorization property.
Due to the factorization property, $k$-graphs are much harder to study, compared with directed graphs. 
A special class of $k$-graphs, called single-vertex $k$-graphs, is singled out to study systematically in \cite{DPY08, DPY10, DY091, DY09, Pow07}. It turns out that 
this is a very interesting, nice and tractable class. Roughly speaking, a single-vertex $k$-graph is a unital cancellative semigroup, 
which are generated by $k$ types of generators $E_i:=\{e^i_1,\dots , e^i_{N_i}\}$ ($1\le i\le k$) satisfying some commutation relations 
determined by a set of permutations $\{\vartheta_{ij}, 1\le i<j\le k: \vartheta_{ij}\text{ is a permutation in } S_{N_i N_j}\}$ \cite{DY09}. 
The key observation/motivation for this paper is the following: to find all set-theoretic solutions of the Yang-Baxter equation is equivalent to study those 
single-vertex $k$-graphs with $k\ge 3$, all $N_i$ equal, and all $\vartheta_{ij}$ equal. 

The rest of this paper is structured as follows. In Section \ref{S:YBE}, the background on the Yang-Baxter equation is given. 
We review $k$-graphs in Section \ref{S:ps}. We also recall the equivalence between product systems of graphs over $\bN^k$ and 
$k$-graphs. The reason to do so is because it is easier to observe some important properties of $k$-graphs, which
will be used later. 
Our main section, Section \ref{S:relation}, explores the interplay between $k$-graphs and the Yang-Baxter equation, and show  
how they are beneficial to each other. The short section, Section \ref{S:pc}, describes some relations between product conjugacy and homomorphisms 
of solutions to the Yang-Baxter equation. In Section \ref{S:nondeg}, we prove the equivalence between the non-degeneracy of a solution to the Yang-Baxter equation and the unique pullback and pushout properties of its associated $k$-graph. An immediate consequence is that the non-degeneracy of a solution to the Yang-Baxter
equation implies the non-degeneracy 
of its higher levels. Finally, in the last section, we briefly discuss the homology and cohomology theories of the Yang-Baxter equation. 

\subsection*{Notation and Conventions}
Let us finish this section with the following notation and conventions. 

For a non-empty set $X$, we let $\bF_X^+$ denote the unital semigroup generated by $X$. 
For an integer $n\ge 1$, by $X^n$ we mean the Cartesian product of $n$ copies of $X$, and 
we often identify $(x_1,\ldots, x_n)\in X^n$ with the word $x_1\cdots x_n$ in $\bF_X^+$. 
The notation $\sym_X$ stands for the symmetric group of $X$. 
In this paper, for simplicity $X$ is assumed to be non-empty and finite, although most of our results also hold true even if $X$ is infinite. 

For a given map $R:X\times X\to X\times X$ and $n\ge 3$, we use $R_{ij}$ ($1\le i<j\le n$) to denote the leg notation of $R$ on $X^n$: 
$R_{ij}$ is equal to the map $R$ when restricted to the Cartesian product of the $i$th and $j$th positions,
and is equal to the identity map $\id_X$ when restricted to other positions. 
 
By $\bN$, we denote the set of all non-negative integers. 
We also use the following combinatorial notation:  for a natural number $N\ge 1$, let $[N]$ stand for the set $\{1,\ldots, N\}$.

\section{The Yang-Baxter Equation}
\label{S:YBE}

We present some necessary background on the Yang-Baxter equation in this section.
The material here is well-known in the literature. See, for example, \cite{CJdR10, ESS, GC, LYZ} for 
more information. 

\subsection{Definitions, Examples and Basic Properties}

Let $X$ be a non-empty set, and $R:X^2\to X^2$ be a bijection given by 
\begin{align}
\label{E:sigma0}
R(x,y)=(u,v).
\end{align} 

\begin{defn}
Let $R$ be defined in \eqref{E:sigma0}. 
We say that 
\begin{itemize}
\item
$R$ is \textit{involutive} if $R^2=\id_{X^2}$;
\item $R$ is \textit{square-free} if $R(x,x)=(x,x)$ for all $x\in X$;
\item $R$ is  \textit{non-degenerate} if the map $x\mapsto v$ is bijective for any fixed $y\in X$, and the map $y\mapsto u$ is bijective 
for any fixed $x\in X$. 
\end{itemize}
\end{defn}

\begin{defn}
\label{D:YBE}
Let $R:X^2\to X^2$ be a bijection. 

(i) We call $R$ a \textit{set-theoretical solution of the Yang-Baxter equation} (abbreviated as \textit{YBE}, or the \textit{braided relation})
if
\begin{align}
\label{E:YBE}
R_{12}R_{23}R_{12}=R_{23}R_{12}R_{23}
\end{align}
on $X^3$. We often simply call $R$ a \textit{YBE solution on $X$}. Sometimes, we write it as 
$R_X$\footnote{This may cause some confusion, since $\id_X$ is a YBE solution on $X$, while it as usual also denotes the identity map on $X$.
So, to avoid any possible confusion, we always write $\id_X$ to denote the identity map on $X$. 
} 
or a pair $(X,R)$ in order to emphasize it is a solution on $X$. 

(ii) An involutive, non-degenerate YBE solution $R$ on $X$ is often said to be \textit{symmetric}. 

(iii) Let $\sigma:X^2\to X^2$ be the flip map $\sigma(x,y)=(y,x)$ for all $x,y\in X$. If $R$ is a solution of YBE on $X$, then 
$r:=\sigma R$ is a \textit{set-theoretical solution of the quantum Yang-Baxter equation} (abbreviated as \textit{QYBE}) \textit{on $X$}:
\[
r_{12}r_{13}r_{23}=r_{23}r_{13}r_{12}.
\] 
As above, we often simply call $r$ a \textit{QYBE solution on $X$}.
\end{defn}

Let us rewrite $R$ in \eqref{E:YBE} as follows:
\begin{align}
\label{E:sigma}
R(x,y)=(\alpha_x(y), \beta_y(x))\qforal x, y\in X.
\end{align}
The characterizations given below in terms of $\alpha$ and $\beta$ are well-known and also easy to 
prove by simple calculations. 

\begin{lem}
\label{L:basic}
Let $R$ be a bijection on $X^2$ defined by \eqref{E:sigma}.  
Then we have the following: 
\begin{itemize}
\item[(i)] $R$ is square-free, if and only if for all $x\in X$
\[
\alpha_x(x)=\beta_x(x)=x \qforal x\in X. 
\]

\item[(ii)] $R$ is involutive, if and only if 
\[
\alpha_{\alpha_x(y)} \beta_y(x)=x\ \text{and}\ \beta_{\beta_y(x)} \alpha_x(y)=y\qforal x,y\in X.
\]

\item[(iii)] $R$ is non-degenerate, if and only if for any fixed $x\in X$, both $\alpha_x$ and $\beta_x$ are bijections on $X$.  

\item[(iv)] $R$ satisfies the YBE, if and only if for all $x,y,z\in X$
\begin{align}
\label{E:hom}
\alpha_x \alpha_y&=\alpha_{\alpha_x(y)} \alpha_{\beta_y(x)},\\
\label{E:antihom}
\beta_y \beta_x&=\beta_{\beta_y(x)} \beta_{\alpha_x(y)},\\
\label{E:comp}
\beta_{\alpha_{\beta_y(x)}(z)} \alpha_x(y)&=\alpha_{\beta_{\alpha_y(z)}(x)} \beta_z(y).
\end{align}
\end{itemize}
\end{lem}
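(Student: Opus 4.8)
The plan is to obtain all four equivalences by direct computation from \eqref{E:sigma}: parts (i)--(iii) are immediate, and part (iv) reduces to a coordinatewise expansion of \eqref{E:YBE}. For (i): by \eqref{E:sigma} we have $R(x,x)=(\alpha_x(x),\beta_x(x))$, so $R(x,x)=(x,x)$ for all $x$ is literally the condition $\alpha_x(x)=\beta_x(x)=x$. For (ii): writing $u=\alpha_x(y)$ and $v=\beta_y(x)$, we get $R^2(x,y)=R(u,v)=\big(\alpha_{\alpha_x(y)}\beta_y(x),\ \beta_{\beta_y(x)}\alpha_x(y)\big)$, and $R^2=\id_{X^2}$ is exactly the statement that this equals $(x,y)$ for all $x,y$. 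For (iii): non-degeneracy asks that for each fixed $y$ the map $x\mapsto v=\beta_y(x)$ be a bijection and for each fixed $x$ the map $y\mapsto u=\alpha_x(y)$ be a bijection, which is precisely the assertion that every $\alpha_x$ and every $\beta_y$ is a bijection of $X$.

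For (iv), observe that \eqref{E:YBE} is an equality of self-maps of $X^3$, hence holds if and only if the two sides agree at every $(x,y,z)$, equivalently if and only if all three coordinates agree there. I would therefore expand both sides, using \eqref{E:sigma} in the two active slots at each step. Applying $R_{12}$, then $R_{23}$, then $R_{12}$ to $(x,y,z)$ yields
\[
R_{12}R_{23}R_{12}(x,y,z)=\Big(\alpha_{\alpha_x(y)}\alpha_{\beta_y(x)}(z),\ \ \beta_{\alpha_{\beta_y(x)}(z)}\alpha_x(y),\ \ \beta_z\beta_y(x)\Big),
\]
and applying $R_{23}$, then $R_{12}$, then $R_{23}$ yields
\[
R_{23}R_{12}R_{23}(x,y,z)=\Big(\alpha_x\alpha_y(z),\ \ \alpha_{\beta_{\alpha_y(z)}(x)}\beta_z(y),\ \ \beta_{\beta_z(y)}\beta_{\alpha_y(z)}(x)\Big).
\]
Equating first coordinates for all $x,y,z$ is \eqref{E:hom}; equating third coordinates gives $\beta_z\beta_y=\beta_{\beta_z(y)}\beta_{\alpha_y(z)}$, which is \eqref{E:antihom} after renaming $(z,y)$ as $(y,x)$; and equating the middle coordinates is \eqref{E:comp}. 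Conversely, if \eqref{E:hom}, \eqref{E:antihom} and \eqref{E:comp} all hold, the two displayed triples coincide for every $(x,y,z)$, so \eqref{E:YBE} holds; this proves (iv).

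I do not expect any genuine obstacle here: the only thing requiring care is the bookkeeping of the nested subscripts (the symbols $\alpha$ and $\beta$ indexed by values that are themselves $\alpha$- or $\beta$-images), together with the harmless relabelling that identifies the third-coordinate identity with \eqref{E:antihom} rather than with a superficially different relation. A useful consistency check is that the first and third coordinates of both sides involve only $\alpha$'s and only $\beta$'s respectively --- matching the ``homomorphism'' and ``anti-homomorphism'' names --- while the genuinely mixed ``compatibility'' relation \eqref{E:comp} sits in the middle slot.
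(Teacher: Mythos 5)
Your computation is correct and is precisely the ``simple calculation'' the paper alludes to --- the paper itself omits the proof, declaring the characterizations well-known and easy to verify. Both expansions of $R_{12}R_{23}R_{12}$ and $R_{23}R_{12}R_{23}$ check out coordinate by coordinate, and you correctly handle the relabelling that identifies the third-coordinate identity with \eqref{E:antihom}.
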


It is sometimes convenient to write (\ref{E:hom}) and (\ref{E:antihom}) as 
\[
\alpha_x \alpha_y=\alpha_u \alpha_v\quad \text{and} \quad \beta_y\beta_x=\beta_v \beta_u
\]
respectively, provided that $R(x,y)=(u,v)$.

\begin{rem}
\label{R:lrderived}
By Lemma \ref{L:basic} (iv),  it is easy to see that $R(x,y)=(\beta_x(y),x)$ is a YBE solution on $X$, if and only if so is $R(x,y)=(y,\beta_y(x))$.  
Solutions of such forms are said to be \textit{of derived-type}. 
\end{rem}

\begin{eg}
\label{Eg:YB}
Some simple examples of YBE solutions on $[N]$ are given below.
\begin{enumerate}
\item
$R=\id$: $R(i,j)=(i,j)$ for all $i,j\in [N]$.  
\item
$R$ is the flip operator: $R(i,j)=(j,i)$ for all $i,j\in [N]$, where the addition is modulo $N$. This is called the \textit{trivial solution} on $[N]$. 
\item
$R(i,j)=(j+1,i+1)$ for all $i,j\in [N]$.
\item
$R(i,j)=(j+1,i)$ for all $i,j\in [N]$.
\item 
$R(i,j)=(j,{2j-i})$  for all $i,j\in [N]$, where $N\ge 3$ and the subtraction is modulo $N$. This is called the \textit{dihedral solution}  of the YBE on $[N]$. 
\end{enumerate}

When $N=2$, one can check that (i) - (iv) actually exhaust (up to isomorphism) all YBE solutions on $[2]$ (cf. \cite{Pow07} and Section \ref{S:pc} below). 
\end{eg}

\begin{eg}
\label{Eg:2.7}
Here is an easy generalization of Example \ref{Eg:YB}. 

(i) 
The YBE solutions $R$ in Example \ref{Eg:YB} (ii), (iii), (iv)
are of the form $R(x,y)=(f(y),g(x))$ for some mappings $f$ and $g$ on $X$. 
Notice that such a mapping $R$ is a YBE solution if and only if $fg=gf$.
Those solutions are called \textit{permutation solutions} of YBE on $X$.

(ii) 
A bijection $R(x,y)=(f(x),g(y))$ is a YBE solution on $X$ if and only if 
$f^2=f$, $g^2=g$ and $fgf=gfg$. 
But since $R$ is bijective, both $f$ and $g$ are invertible. So $f=g=\id_X$, implying $R=\id_{X^2}$. 
So Example \ref{Eg:YB} (i) is actually the only solution of such form. 
\end{eg}


\subsection{Some Known Constructions -- from Old to New}
\label{SS:oldnew}

In what follows, we review some known constructions of YBE solutions from given ones,
which will be useful later.

\subsection*{$1^\circ$ Cartesian Product Solutions \cite{ESS}}
Let 
\[
R_X(x_1,x_2)=(\alpha_{x_1}(x_2), \beta_{x_2}(x_1)) \text{ and } R_Y(y_1,y_2)=(\tilde\alpha_{y_1}(y_2), \tilde\beta_{y_2}(y_1))
\]
be YBE solutions on $X$ and $Y$, respectively. The
\textit{Cartesian product} of $R_X$ and $R_Y$ is a YBE solution $R$ on $X\times Y$ defined by 
\[
R\big((x_1,y_1),(x_2,y_2)\big)=\big((\alpha_{x_1}(x_2),\tilde\alpha_{y_1}(y_2)),(\beta_{x_2}(x_1), \tilde\beta_{y_2}(y_1))\big).
\]

\subsection*{$2^\circ$ Regular Extensions \cite{GM}} 
Let $R_X$ and $R_Y$ be YBE solutions on $X$ and $Y$, respectively. 
Motivated from \cite{ESS}, a very general construction of YBE solutions on the disjoint union $X\sqcup Y$,
called \textit{regular extensions of $R_X$ and $R_Y$},
is fully studied in \cite{GM}. For our purpose, here we only review two particular ones.

(i) \underline{The trivial extension}: It is the YBE solution $R$ on $X\sqcup Y$ defined via
\begin{align*}
R|_{X^2}&=R_X,\ R|_{Y^2}=R_Y,\ 
R|_{X\times Y}=\sigma_{X,Y},\ R|_{Y\times X}=\sigma_{Y,X}. 
\end{align*}
Here $\sigma_{X,Y}: X\times Y\to Y\times X$ is the flip operator; likewise for $\sigma_{Y,X}$.  

(ii) \underline{$\id_{X^2}\times_\theta\id_{Y^2}$}:
Let $R_X=\id_{X^2}$, $R_Y=\id_{Y^2}$, and $\theta: X\times Y\to Y\times X$ be a bijection. 
Define $R:(X\sqcup Y) \times (X\sqcup Y)\to (X\sqcup Y) \times (X\sqcup Y)$ to be 
\begin{align*}
R|_{X^2}=\id_{X^2}, \ R|_{Y^2}=\id_{Y^2},\ 
R|_{X\times Y}= \theta,\ R|_{Y\times X}=\theta^{-1}.
\end{align*}
Then $R$ is a YBE solution on $X\sqcup Y$, which is denoted as $\id_{X^2}\times_\theta \id_{Y^2}$.

\subsection*{$3^\circ$ Derived Solutions \cite{LYZ, S}}
Let $R$ be a non-degenerate YBE solution on $X$. Then from 
\begin{align}
\label{E:derived}
(x,y)\mapsto \big(\alpha_x(y),\beta_y(x)\big)\mapsto \big(\alpha_{\alpha_x(y)}\circ \beta_y(x), \beta_{\beta_y(x)}\circ \alpha_x(y)\big)
\end{align}
we get a new YBE solution $R'$ on $X$
\[
R'(\beta_y(x),y)=\big(\beta_{\beta_y(x)}\circ \alpha_x(y)), \beta_y(x)\big).
\]
Namely,
\[
R'(x,y)= \big(\beta_{x}\circ \alpha_{\beta_y^{-1}(x)}(y), x\big)\qforal x,y\in X.
\]
This solution $R'$ is called the \textit{derived solution} of $R$. 

Completely similar to above, it follows from \eqref{E:derived} that 
\[
(x,\alpha_x(y))\mapsto \big(\alpha_x(y),\alpha_{\alpha_x(y)}\circ\beta_y(x)\big)
\]
i.e.,
\[
(x,y)\mapsto \big(y,\alpha_y\circ\beta_{\alpha_x^{-1}(y)}(x)\big)
\]
is also a YBE solution on $X$ (\cite{LYZ}). But this one is closely related to $R'$ by Remark \ref{R:lrderived}.

One can easily see that if $R$ is symmetric, then $R'$ is the trivial solution. It is shown in \cite{ESS, LYZ, S} that $R$ and $R'$ are equivariant via a very special map.

\subsection{YB-Semigroups and YB-Groups}

Let us associate to a given YBE solution two important objects -- its YB-semigroup and YB-group. 
\begin{defn}
Let $R$ be a YBE solution on $X$.

$\blacktriangleright$ The \textit{YB-semigroup of $R$}, denoted as $G_R^+$, is the unital semigroup generated by $X$ with commutation relations determined by $R$:
\[
G_{R}^+={}_{\text{sgp}}\big\langle X; xy=y'x' \text{ whenver }R(x,y)=(y',x')\big\rangle.
\]

$\blacktriangleright$ The \textit{YB-group of $R$}, denoted as $G_R$, is the group generated by $X$ with commutation relations determined by $R$:
\[
G_{R}={}_{\text{gp}}\big\langle X; xy=y'x' \text{ whenever }R(x,y)=(y',x')\big\rangle.
\]
\end{defn}

We should mention that, in the literature on the YBE,\footnote{There is a bias on the notation here: in the literature on the YBE, $G_R^+$ and $G_R$ are often written as $G_X^+$ and $G_X$, 
respectively. Our intention here is to unify the notation for the YBE and the notation used for $k$-graphs later.}  
$G_R^+$ and $G_R$ are also known as the \textit{structure semigroup} and \textit{structure group} of $R$ on $X$,
because they play vital roles in the classification of YBE solutions \cite{ESS, GM, GV98, LYZ, S}. In particular, symmetric YBE solutions are completely classified 
by bijective 1-cocycle quadruples in \cite{ESS}; it is shown in \cite{GV98} that they are essentially the same as semigroups of I-type, and closely
related to semigroups of skew polynomial type, Bieberbach groups.
YBE solutions on groups are completely characterized in \cite{LYZ}. 
Also, some results in \cite{ESS} are generalized to arbitrary non-degenerate YBE solutions in \cite{S}.


One can easily rephrase the characterization given in Lemma \ref{L:basic} (iv) in terms of representations of YB-semigroups and YB-groups
(cf., e.g., \cite{ESS, GM}). 

\begin{cor}
\label{C:char}
A map $R(x,y)=(\alpha_x(y), \beta_y(x))$ is a YBE solution on $X$, if and only if
\begin{itemize}
\item[(i)] $\alpha$ can be extended to a left action of $G_R$ and/or $G_R^+$ on $X$, 
\item[(ii)] $\beta$ can be extended to a right action of $G_R$ and/or $G_R^+$ on $X$, and 
\item[(iii)] the compatibility condition \eqref{E:comp} holds. 
\end{itemize}
 \end{cor}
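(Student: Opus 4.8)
The plan is to unwind the definitions of the YB-semigroup $G_R^+$ and YB-group $G_R$ and match them against the algebraic identities in Lemma \ref{L:basic}(iv). The key observation is that a left action of a semigroup (or group) presented by generators and relations is the same thing as an assignment of maps to the generators together with the requirement that the images of the two sides of each defining relation agree. So "$\alpha$ extends to a left action of $G_R^+$ on $X$" unpacks to: the map $G_R^+\ni w \mapsto (\text{composition of }\alpha\text{'s})$ is well-defined, and by the presentation of $G_R^+$ this is well-defined precisely when, for every relation $xy = y'x'$ with $R(x,y) = (y',x')$, we have $\alpha_x\alpha_y = \alpha_{y'}\alpha_{x'}$. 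But that is exactly \eqref{E:hom} in the form $\alpha_x\alpha_y = \alpha_u\alpha_v$ recorded after Lemma \ref{L:basic}. Dually, "$\beta$ extends to a right action" unpacks to $\beta_y\beta_x = \beta_v\beta_u$ for every relation, which is exactly \eqref{E:antihom}. Condition \eqref{E:comp} is carried over verbatim as (iii).

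First I would record the easy direction: if $R$ is a YBE solution, then by Lemma \ref{L:basic}(iv) the three identities \eqref{E:hom}, \eqref{E:antihom}, \eqref{E:comp} hold; \eqref{E:hom} says the $\alpha$-assignment respects every defining relation of $G_R^+$, hence extends (multiplicatively, sending a word $x_1\cdots x_n$ to $\alpha_{x_1}\cdots\alpha_{x_n}$) to a monoid homomorphism $G_R^+ \to \sym_X$ — i.e.\ a left action — and similarly \eqref{E:antihom} gives the right action; \eqref{E:comp} is (iii). For the group $G_R$ one uses additionally that each $\alpha_x$ is only required to be a well-defined map for the semigroup action, but since $R$ being a bijection does not by itself force $\alpha_x$ invertible, the "$G_R$" clause should be read under the standing assumption that makes the generators act invertibly; in the symmetric / non-degenerate case each $\alpha_x \in \sym_X$ (Lemma \ref{L:basic}(iii)), so the semigroup homomorphism factors through $G_R$. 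I would make this caveat explicit rather than gloss over it, since the corollary as stated writes "and/or $G_R$".

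For the converse, I would assume (i), (ii), (iii). From (i), restricting the left action to generators gives $\alpha_x\alpha_y = \alpha_{y'}\alpha_{x'}$ whenever $R(x,y)=(y',x')$, which is \eqref{E:hom}; from (ii) likewise \eqref{E:antihom}; and (iii) is \eqref{E:comp} directly. Then Lemma \ref{L:basic}(iv) yields that $R$ satisfies the YBE. This half is essentially a transcription once the presentation-to-action dictionary is in place.

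The only real subtlety — and the step I'd be most careful about — is the "well-defined" part of "extends to an action", i.e.\ making precise that a set map on generators that respects the defining relations of a monoid/group presentation does extend consistently to the whole monoid/group. This is the universal property of presentations and is routine, but it is where the content of the phrase "$\alpha$ can be extended to a left action" actually lives, so the proof should state it cleanly (e.g.\ "by the universal property of the presentation of $G_R^+$, the assignment $x \mapsto \alpha_x$ extends to a monoid homomorphism into $\sym_X$ if and only if $\alpha_x\alpha_y = \alpha_u\alpha_v$ for $R(x,y)=(u,v)$"). A secondary point worth a remark is consistency of the left and right actions on the same set $X$ — nothing forces them to commute, and indeed they need not; condition \eqref{E:comp} is precisely the compatibility that is missing, which is why it cannot be absorbed into (i)–(ii) and must be listed separately.
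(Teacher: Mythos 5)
Your proposal is correct and follows exactly the route the paper intends: the paper offers no written proof, merely asserting that the corollary is a rephrasing of Lemma \ref{L:basic}(iv) via the presentations of $G_R^+$ and $G_R$, and your unwinding through the universal property of those presentations is precisely that rephrasing. Your explicit caveat that the $G_R$ clause requires the $\alpha_x$ (and $\beta_x$) to act invertibly --- which the bijectivity of $R$ alone does not guarantee --- is a genuine point of care that the paper's ``and/or'' glosses over, and is worth keeping.
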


\begin{egs}

Let us consider the following examples.

(i) If $R=\id$ on $[N]$, then $G_R^+=\bF_N^+$ and $G_R=\bF_N$.

(ii) If $R$ is the trivial YBE solution on $[N]$, then $G_R^+=\bN^N$ and $G_R=\bZ^N$. 

(iii) If $R$ is the dihedral solution on $[3]$, then 
\begin{align*}
G_R^+
&=\langle e_1, e_2,e_3: e_1e_2=e_2e_3=e_3e_1, \ e_2e_1=e_3e_2=e_1e_3\rangle,\\
G_R
&=\langle e_1, e_2: e_1^2=e_2^2, \ e_1e_2e_1=e_2e_1e_2\rangle.
\end{align*}
Let $e_\mt$ be the identity of $G_R$. Then $G_R/\langle e_1^2-e_\mt, e_2^2-e_\mt\rangle$ is isomorphic to the dihedral group $D_3$.

\end{egs}

\section{$k$-graphs}

\label{S:ps}

$k$-graphs have been attracting a great deal of attention because they provide a lot of interesting operator algebras 
and have some nice applications.
In this section, we only give a very short account. 
The main sources of this sections are \cite{FS01, KP00, RS05, DY09}.

\subsection{$k$-Graphs}

Let $k \ge 1$ be an integer  and regard the monoid $\bN^k$ as a category with one object. Also we write $\{\epsilon_i:1\le i\le k\}$ to 
denote the standard basis of $\bN^k$.  

\begin{defn}
\label{D:Pgr}
A \textit{$k$-graph} (or \textit{higher-rank graph}) is a countable small category $\Lambda$ with a degree map $d:\Lambda\to \bN^k$ satisfying the following 
factorization property:
\begin{center}
whenever $\lambda\in\Lambda$ satisfies $d(\lambda)=\bm+\bn$, there are unique elements $\mu, \nu\in\Lambda$ such that 
$d(\mu)=\bm$, $d(\nu)=\bn$ and $\lambda=\mu\nu$. 
\end{center}
\end{defn}

Let us recall that a (directed) graph $E=(E^0,E^1,r_E,s_E)$ consists of non-empty countable sets $E^0$ (called the vertex set), $E^1$ (called
the edge set), the source and range maps $s_E,r_E:E^1\to E^0$. 
Let $E^n=\{\lambda=e_1e_2\cdots e_n: e_1,\ldots, e_n\in E^1, r_E(e_i)=s_E(e_{i+1}) \text{ for }1\le i\le n-1\}$.
One can extend the source and range maps to $E^n$: $s_E(\lambda)=s_E(e_1)$ and $r_E(\lambda)=r_E(e_n)$ 
for $\lambda=e_1\cdots e_n\in E^n$. The path space of $E$ is defined to be
$E^*=\bigcup_{n=0}^\infty E^n$.
A graph can be naturally identified with $1$-graphs as follows. Let $\Lambda=E^*$, $d(\lambda)=n$,
$r(\lambda)=s_E(\lambda)$ and $s(\lambda)=r_E(\lambda)$  for all $\lambda\in E^n$. 
Thus $k$-graphs are a higher dimensional generalization of directed graphs. 

For $\bn\in \bN^k$, let $\Lambda^\bn=d^{-1}(\bn)$. Naturally, $\Lambda^{\mathbf{0}}$ and $\cup_{i=1}^k\Lambda^{\epsilon_i}$ can be 
regarded as the vertex and edge sets of $\Lambda$, respectively. It is often useful to think of a $k$-graph as a graph whose edges are labelled in $k$ 
(distinct) colours. 
There are source and range maps $s, r: \Lambda\to \Lambda^{\mathbf{0}}$ such that $r(\lambda)\lambda s(\lambda)=\lambda$ for all $\lambda\in\Lambda$.
For $v\in\Lambda^{\mathbf{0}}$ and $\bn\in\bN^k$,
set $v\Lambda^\bn=\{\lambda\in\Lambda: r(\lambda)=v, d(\lambda)=\bn\}$. 
We say that a $k$-graph $\Lambda$ has \textit{no sources} (resp. be \textit{row-finite})  if 
$|v\Lambda^\bn|>0$ (resp. $|v\Lambda^\bn|<\infty$)
for all $v\in\Lambda^{\mathbf{0}}$ and $\bn\in \bN^k$.
In this paper, all $k$-graphs are always assumed to have no sources and be row-finite. 

It is proved in \cite{FS01, RS05} that the class of $k$-graphs is equivalent to the class of product systems of graphs over $\bN^k$. 
Since we will use this relation later, let us digress for a while to recall the notion of such product systems. 

Let $E=(E^0, E^1, r_E, s_E)$ and  $F=(E^0, F^1, r_F, s_F)$ be two graphs with the same 
vertex set $E^0$.  Construct a new directed graph $E\times_{E^0} F$, where
\begin{align*} 
&(E\times_{E^0}F)^0=E^0,\\ 
&(E\times_{E^0}F)^1=\big\{(\lambda,\mu):\lambda\in E^1, \mu\in F^1, r_E(\lambda)=s_F(\mu)\big\},\\
&s(\lambda,\mu)=s_E(\lambda),\  r(\lambda,\mu)=r_F(\mu).
\end{align*}

\begin{defn}
\label{D:ps}
A \textit{product system $(E,\alpha)$ of graphs over $\bN^k$} consists of a family of graphs $\big\{E_\bn:=(E^0, E_\bn^1, r_\bn,s_\bn):\bn\in \bN^k\big\}$,
which  have common vertex set $E^0$ and disjoint edge sets $E_\bn^1$ ($\bn\in \bN^k$), such that the following properties hold: 
\begin{itemize}
\item for all $\bm,\bn\in \bN^k$, there are graph isomorphisms $\alpha_{\bm,\bn}:E_\bm\times_{E^0}E_\bn\to E_{\bm+\bn}$ satisfying the associativity condition
\[
\alpha_{\bm+\bn,\bl}(\alpha_{\bm,\bn}(\lambda,\mu),\nu)=\alpha_{\bm,\bn+\bl}(\lambda,\alpha_{\bn,\bl}(\mu,\nu))\quad (\bl, \bm, \bn\in\bN^k)
\]
for all $(\lambda,\mu)\in(E_\bm\times_{E^0}E_\bn)^1$ and $(\mu,\nu)\in(E_\bn\times_{E^0}E_\bl)^1$;

\item
$
E_{\mathbf 0}=(E^0,E^0,\id_{E^0},\id_{E^0}).
$
\end{itemize}
\end{defn}


Here is the equivalence between $k$-graphs and product systems of graphs over $\bN^k$ (\cite{FS01, RS05}). 
For a given product system $(E,\alpha)$ of graphs over $\bN^k$,
let $\Lambda_E$ be the category with objects $E^0$ and morphisms $E^1:=\bigsqcup_{\bn\in \bN^k}E_\bn^1$, with $s(\lambda)=\dom(\lambda)$, 
$r(\lambda)=\codom(\lambda)$, $d(\lambda)=\bn$ for all $\lambda\in E_\bn^1$.
Then $(\Lambda_E, d)$ is a $k$-graph. 

Conversely, for a given $k$-graph $(\Lambda,d)$,  the factorization property determines an isomorphism 
$\alpha_{\bm, \bn}: \Lambda^\bm\times_{\Lambda^{\mathbf{0}}} \Lambda^\bn\to \Lambda^{\bm+\bn}$ for all $\bm, \bn\in\bN^k$. 
Now let $(E_\Lambda)^0:=\Lambda^{\mathbf{0}}$, 
$(E_\Lambda)_\bn^1:=\Lambda^\bn=\{\lambda\in\Lambda: d(\lambda)=\bn\}$ 
for all $\bn\in \bN^k$, $\dom(\lambda)=s(\lambda)$, and $\codom(\lambda)=r(\lambda)$ for all $\lambda\in\Lambda$. 
Then $(E,\alpha)$ is a product system of graphs over $\bN^k$.

The following simple but very useful observation becomes obvious now.

\begin{obs}
\label{O:subps}
Let $\Lambda$ be a $k$-graph, and $S$ be a sub-semigroup of $\bN^k$ including $\mathbf{0}$, which has rank $n\in\bN$. 
Then the restriction of $\Lambda$ onto $S$:
\[
\Lambda_S:=\big\{\lambda\in\Lambda: d(\lambda)\in S\big\},
\]
is an $n$-graph. 
\end{obs}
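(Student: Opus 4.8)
The plan is to use the product‑system picture recalled just above. Given the $k$‑graph $\Lambda$ and the rank‑$n$ subsemigroup $S\subseteq\bN^k$ containing $\mathbf 0$, I would pass to the associated product system $(E_\Lambda,\alpha)$ with vertex set $\Lambda^{\mathbf 0}$ and graphs $(E_\Lambda)_{\bn}$ for $\bn\in\bN^k$, and then simply restrict the indexing set to $S$. Concretely, set $(E_S)_{\Bs}:=(E_\Lambda)_{\Bs}$ for each $\Bs\in S$, keeping the same common vertex set $\Lambda^{\mathbf 0}$, and keep the graph isomorphisms $\alpha_{\Br,\Bs}$ for $\Br,\Bs\in S$. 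The first point to check is that this is well defined: since $S$ is closed under addition and contains $\mathbf 0$, whenever $\Br,\Bs\in S$ we have $\Br+\Bs\in S$, so $\alpha_{\Br,\Bs}\colon (E_\Lambda)_{\Br}\times_{\Lambda^{\mathbf 0}}(E_\Lambda)_{\Bs}\to (E_\Lambda)_{\Br+\Bs}$ lands inside the restricted family; and $\mathbf 0\in S$ gives $(E_S)_{\mathbf 0}=(E_\Lambda)_{\mathbf 0}=(\Lambda^{\mathbf 0},\Lambda^{\mathbf 0},\id,\id)$, the required identity graph. The associativity condition for $(E_S,\alpha|_S)$ is inherited verbatim from that of $(E_\Lambda,\alpha)$, since it is an identity among the $\alpha_{\bullet,\bullet}$ that only involves triples of indices, all of which now lie in $S$. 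Hence $(E_S,\alpha|_S)$ is a product system of graphs over $S$.

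Next I would identify $S$, as a monoid, with $\bN^n$. Because $S$ is a subsemigroup of $\bN^k$ containing $\mathbf 0$ with rank $n$, it is a finitely generated commutative cancellative torsion‑free monoid; it need not be freely generated, but what we actually need is only a monoid isomorphism onto $\bN^n$ so that "product system over $S$" becomes "product system over $\bN^n$" and the equivalence of Farthing–Sims / Raeburn–Sims applies. Here I would either invoke the standard fact that such an $S$ is isomorphic to $\bN^n$, or — to be safe about the generators — note that the equivalence between $n$-graphs and product systems of graphs can be stated over any such monoid $S$ of rank $n$ exactly as it is over $\bN^n$, the factorization/associativity bookkeeping being identical. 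Applying that equivalence to $(E_S,\alpha|_S)$ produces a category $\Lambda_{E_S}$ with a degree map into $S\cong\bN^n$ satisfying the factorization property, i.e.\ an $n$-graph.

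Finally I would check that this $n$-graph is literally $\Lambda_S=\{\lambda\in\Lambda: d(\lambda)\in S\}$ with the structure induced from $\Lambda$. The morphisms of $\Lambda_{E_S}$ are $\bigsqcup_{\Bs\in S}(E_S)_{\Bs}^1=\bigsqcup_{\Bs\in S}\Lambda^{\Bs}=\{\lambda\in\Lambda:d(\lambda)\in S\}$, with source, range and the degree map restricted from $\Lambda$, and the composition supplied by $\alpha|_S$, which is the restriction of the composition in $\Lambda$; so $\Lambda_{E_S}$ and $\Lambda_S$ coincide as categories‑with‑degree‑map, and the factorization property for $\Lambda_S$ is just the factorization property for $\Lambda$ applied to morphisms whose degree lies in $S$ and factors through $S$. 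This gives the claim.

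The only genuinely delicate point is the middle step: making precise that "product system of graphs over a rank‑$n$ submonoid $S$" is equivalent to "product system over $\bN^n$" (equivalently, that $S\cong\bN^n$), rather than just over some rank‑$n$ monoid. Everything else — closure of $S$ under the operations needed, inheritance of associativity, and the identification $\Lambda_{E_S}=\Lambda_S$ — is routine once the framework above is set up. If one prefers to avoid the monoid‑structure subtlety entirely, an alternative is to verify the factorization property for $\Lambda_S$ directly: for $\lambda\in\Lambda_S$ with $d(\lambda)=\Br+\Bs$, $\Br,\Bs\in S$, the unique factorization $\lambda=\mu\nu$ in $\Lambda$ has $d(\mu)=\Br\in S$, $d(\nu)=\Bs\in S$, so $\mu,\nu\in\Lambda_S$ and uniqueness is inherited; combined with the observation that $\Lambda_S$ is a countable small category (a full subcategory of $\Lambda$ on the same objects, closed under composition because $S$ is closed under addition) and that $d|_{\Lambda_S}$ takes values in $S$ which has rank $n$, this is exactly the definition of an $n$-graph after identifying $S$ with $\bN^n$.
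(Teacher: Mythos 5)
Your proposal is correct and follows exactly the route the paper intends: the Observation is stated without proof immediately after the equivalence between $k$-graphs and product systems of graphs over $\bN^k$ is recalled, precisely so that "restrict the product system to the indices in $S$" is the whole argument. The one delicate point you flag is real --- a rank-$n$ subsemigroup of $\bN^k$ containing $\mathbf{0}$ need not be monoid-isomorphic to $\bN^n$ (e.g.\ $\langle 2,3\rangle\subseteq\bN$), so strictly one should assume $S$ is freely generated --- but in every application in the paper $S$ is of the form $\langle l\epsilon_1,m\epsilon_2,n\epsilon_3\rangle$ and hence free, so your argument (and your direct verification of the factorization property as a fallback) covers everything that is actually used.
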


\medskip 

Let $\Lambda$ be a given $k$-graph.  As mentioned above, there is an isomorphism $\alpha_{\epsilon_i,\epsilon_j}$ 
from $\Lambda^{\epsilon_i}\times_{\Lambda^{\mathbf{0}}} \Lambda^{\epsilon_j}$ onto $\Lambda^{\epsilon_i+\epsilon_j}$ for all $1\le i<j\le k$. Thus 
one obtains the following isomorphism:
\begin{align*}
\theta_{ij}=\alpha_{\epsilon_j,\epsilon_i}^{-1}\circ\alpha_{\epsilon_i,\epsilon_j}:
 \Lambda^{\epsilon_i}\times_{\Lambda^{\mathbf{0}}} \Lambda^{\epsilon_j}\to \Lambda^{\epsilon_j}\times_{\Lambda^{\mathbf{0}}} \Lambda^{\epsilon_i}\quad (1\le i<j\le k).
\end{align*}
A natural question is if a given family $\{\theta_{ij}: 1\le i<j\le k\}$ of isomorphisms of countable graphs determines a $k$-graph. 
This is the case for $k=2$ (\cite{KP00}).  However, this fails for $k\ge 3$.
It is shown in \cite{FS01} that this is the case for $k\ge 3$, if and only if for  $1 \le i< j < \fk \le k$ one has 
\begin{align}
\label{E:kQYB}
(\id_\fk\times \theta_{ij})(\theta_{i\fk}\times \id_j)(\id_i\times \theta_{j\fk})=(\theta_{j\fk}\times \id_i)(\id_j\times \theta_{i\fk})(\theta_{ij}\times \id_\fk),
\end{align}
as isomorphisms from $\Lambda^{\epsilon_i}\times_{\Lambda^{\mathbf{0}}} \Lambda^{\epsilon_j}\times_{\Lambda^{\mathbf{0}}} \Lambda^{\epsilon_\fk}$ to 
$\Lambda^{\epsilon_\fk}\times_{\Lambda^{\mathbf{0}}} \Lambda^{\epsilon_j}\times_{\Lambda^{\mathbf{0}}} \Lambda^{\epsilon_i}$. 
Here $\id_i$ is the identity on $\Lambda^{\epsilon_i}$.
This result is a key to studying the interplay between $k$-graphs and the YBE, and provides the starting point of this paper.

\subsection{Single-vertex $k$-graphs}
\label{SS:single}

Roughly speaking, a single vertex $k$-graph $\Lambda$ is a unital cancellative semigroup of special form \cite{DY09}.
To be more precise, for $1\le i\le k$, let 
\[
E_i:=\{e^i_1,\dots , e^i_{N_i}\} 
\]
be the set of all edges in $\Lambda$ of degree $\ep_i$. 
It follows from the factorization property of $\Lambda$ that,
for $1\le i<j\le k$, there is a bijection $\theta_{ij}: [N_i]\times[N_j]\to [N_j]\times [N_i]$
such that the following \textit{$\theta$-commutation relations} hold:
\begin{align*}
\tag{$\theta$-CR}
 e^i_\fs e^j_\ft = e^j_{\ft'} e^i_{\fs'} 
 \quad\text{if}\quad 
 \theta_{ij}(\fs,\ft) = (\ft',\fs') .
\end{align*}
Then $\Lambda$ coincides with the unital semigroup $\Fth$ defined as follows 
\cite{DY09}\footnote{Note the difference on the notation: the map $\theta_{ij}$ in \cite{DY09} is actually the same as $\vartheta_{ij}$ used 
\eqref{E:kYB} below.}
\begin{align*}
\Fth=\big\langle e_\ft^i\in E_i,\, \ft\in [N_i],\, 1\le i\le k;\, \text{($\theta$-CR)}\big\rangle,
\end{align*}
which is occasionally also written as
\begin{align}
\label{E:Fth}
\Fth=\big\langle e_\ft^i\in E_i,\, 1\le i\le k; \, \theta_{ij}, \, 1\le i<j\le k \big\rangle.
\end{align}
It is worthwhile to mention that $\Fth$ has (left and right) cancellation property due to the factorization property of $\Lambda$. 
It follows from ($\theta$-CR) that every element $x\in \Fth$ can be uniquely written as
$
x=e_{u_1}^1\cdots e_{u_k}^k
$
for some $u_i\in\bF_{N_i}^+$ ($1\le i\le k$). Here we use 
the multi-index notation: $e^i_{u_i}=e^i_{\fs_1}\cdots e^i_{\fs_n}$ if $u_i=\fs_1\cdots\fs_n\in\bF_{N_i}^+$.
The degree map $d:\Fth\to \bN^k$ is given by $d(x)=(|u_1|,\ldots, |u_k|)$,
where $|u_i|$ is the length of $u_i$. 
Sometimes, in order to emphasize that $\Fth$ is a $k$-graph (not just a usual semigroup), 
we write it as a pair $(\Fth,d)$. 

Let $\sigma_{j,i}: [N_j]\times [N_i]\to [N_i]\times [N_j]$ be the flip operator, and $\vartheta_{ij}=\sigma_{j,i}\circ\theta_{ij}$. 
Then $\vartheta_{ij}$ is a bijection on $[N_i]\times [N_j]$.
For convenience, we identify each bijection $\vartheta_{ij}$ as a bijection on $\prod_{i=1}^k [N_i]$, which fixes the coordinates except for
$i,j$, on which it acts as $\vartheta_{ij}$. Abusing the notation, we still denote the corresponding bijection on  $\prod_{i=1}^k [N_i]$ as $\vartheta_{ij}$. 
 Then one can rephrase the identity \eqref{E:kQYB} as follows: the family $\vartheta=\{\vartheta_{ij}:1\le i<j\le k\}$ satisfies the following generalized QYBE:
\begin{align}
\label{E:kYB}
 \vartheta_{ij}\vartheta_{i\fk}\vartheta_{j\fk}=\vartheta_{j\fk}\vartheta_{i\fk}\vartheta_{ij}
 \qforal 1 \le i< j <\fk \le k.
\end{align}
It follows from \eqref{E:kQYB} that Eq.~\eqref{E:kYB} exactly characterizes single-vertex $k$-graphs.  
As observed in \cite{DY09}, using Observation \ref{O:subps} one has the following. 

\begin{obs}
\label{O:3-k}
Let $k \ge 3$ and $\theta_{ij}: [N_i]\times [N_j]\to [N_j]\times [N_i]$ be bijections for all $1\le i<j\le k$. Then $\{\theta_{ij}:1\le i<j\le k\}$ 
determines a single-vertex $k$-graph $\Fth$, if and only if the restriction of $\Fth$
to every triple family $\{ e^i_\fs,\ e^j_\ft,\ e^{\fk}_\fu \}$ determines a (single-vertex) 3-graph.
\end{obs}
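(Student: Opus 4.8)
The plan is to prove Observation \ref{O:3-k} directly from Observation \ref{O:subps} together with the fact (recalled just before the statement) that the family $\{\theta_{ij}\}$ determines a single-vertex $k$-graph if and only if equation \eqref{E:kYB}, equivalently \eqref{E:kQYB}, holds for every triple $1\le i<j<\fk\le k$. The key point is that both the hypothesis and the conclusion are, in the end, conjunctions of conditions indexed by triples $(i,j,\fk)$, so the equivalence will follow once I show that the $(i,j,\fk)$-instance of \eqref{E:kQYB} is \emph{exactly} the condition that the three colours $E_i, E_j, E_{\fk}$ assemble into a $3$-graph.

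First I would set up the forward direction. Suppose $\{\theta_{ij}:1\le i<j\le k\}$ determines a single-vertex $k$-graph $\Fth$. Fix a triple $i<j<\fk$ and let $S=\bN\epsilon_i+\bN\epsilon_j+\bN\epsilon_{\fk}\subseteq\bN^k$, a rank-$3$ sub-semigroup of $\bN^k$ containing $\mathbf 0$. By Observation \ref{O:subps}, the restriction $\Fth_S=\{x\in\Fth: d(x)\in S\}$ is a $3$-graph; and by construction its generators of degree $\epsilon_i,\epsilon_j,\epsilon_{\fk}$ are precisely $E_i,E_j,E_{\fk}$ with the induced $\theta$-commutation relations coming from $\theta_{ij},\theta_{i\fk},\theta_{j\fk}$. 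After relabelling the three colours as $1,2,3$, this is exactly the single-vertex $3$-graph determined by that triple family, proving the "only if" part.

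For the converse, suppose that for every triple $\{e^i_\fs, e^j_\ft, e^{\fk}_\fu\}$ the induced relations $\theta_{ij},\theta_{i\fk},\theta_{j\fk}$ determine a single-vertex $3$-graph. By the $k=3$ case of the characterization recalled before the statement — i.e. the $3$-graph version of \eqref{E:kYB} — this means precisely that $\vartheta_{ij}\vartheta_{i\fk}\vartheta_{j\fk}=\vartheta_{j\fk}\vartheta_{i\fk}\vartheta_{ij}$ holds for that triple. Since this holds for all $1\le i<j<\fk\le k$, the full family $\vartheta=\{\vartheta_{ij}\}$ satisfies \eqref{E:kYB}, and hence, by \eqref{E:kQYB} characterizing single-vertex $k$-graphs, $\{\theta_{ij}\}$ determines a single-vertex $k$-graph $\Fth$. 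I would also remark that this $\Fth$ restricts back to the given $3$-graphs on each triple, so the correspondence is consistent.

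The only mild subtlety — and the step I would be most careful about — is bookkeeping: making sure the $\vartheta_{ij}$ viewed as bijections of $\prod_{i=1}^k[N_i]$ that appear in \eqref{E:kYB} agree, upon passing to a triple, with the $\vartheta$'s appearing in the $3$-graph version of \eqref{E:kYB} on $[N_i]\times[N_j]\times[N_{\fk}]$, and that the identity-map "padding" on the other coordinates is harmless. This is routine because each \eqref{E:kYB} relation for a fixed triple $(i,j,\fk)$ involves only the coordinates $i,j,\fk$, so it is equivalent to its restriction to $[N_i]\times[N_j]\times[N_{\fk}]$; no genuine obstacle arises, but it is the place where one must be precise rather than wave hands. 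With that observed, the equivalence is immediate from Observation \ref{O:subps} and the $k\ge3$ characterization \eqref{E:kQYB}.
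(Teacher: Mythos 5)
Your proposal is correct and follows essentially the same route the paper indicates: the paper gives no written proof but explicitly derives the observation from Observation \ref{O:subps} (for the restriction to a rank-$3$ sub-semigroup) together with the Fowler--Sims characterization \eqref{E:kQYB}/\eqref{E:kYB}, which is triple-by-triple — exactly your two directions. Your remark about the harmlessness of the identity padding on the unused coordinates is the right point to be careful about, and it is indeed routine.
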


\section{Between $k$-graphs and the YBE: back and forth}

\label{S:relation}

This long section, which is the main section of this paper, explores the interplay between single-vertex $k$-graphs and the YBE. 
For this, we provide two very different perspectives which seem contradictory in appearance.
On one hand, we show that to obtain solutions of the YBE is equivalent to study 
a very special class of single-vertex $k$-graphs with $k\ge 3$ (Theorem \ref{T:YBkGr}); 
on the other hand, we prove that all single-vertex $k$-graphs ($k\ge 3$) can be obtained 
from a special class of involutive, square-free solutions of the YBE (Theorem \ref{T:equ}).  

Here are two important consequences. Firstly, we exhibit a family of ``higher level" YBE solutions $\{R^{n,n}: n\ge 1\}$ from 
an arbitrarily given one $R$ (Theorem \ref{T:cons}). The sizes of those solutions can be as large as one wishes.
Moreover, these new solutions $R^{n,n}$ are involutive (resp. of derived-type), if and only if $R$ has respective properties.
This substantially generalizes the main result of \cite{BC14} (also cf. \cite{CJdR10}). 
Secondly, all single-vertex $k$-graphs ($k\ge 3$) are nothing but special cancellative YB-semigroups.  
We believe that this will be very helpful in the future studies on the relations of between operator algebras associated to $k$-graphs 
and algebras associated to YB-semigroups/groups. 

Besides above, the periodicity of a YBE solutions in terms of its associated $k$-graphs is investigated.  
Also, we discuss connections with some known results in \cite{GM, LYZ} from our perspectives. 

\medskip

Our first result in this section follows from Eq.~\eqref{E:kYB} and Observation \ref{O:3-k}.

\begin{thm}[and \textbf{Definition}]
\label{T:YBkGr}
Let $N\ge 1$ be an integer and $R$ be a bijection on $[N]^2$. Then the following statements are equivalent:
\begin{itemize}
 \item[(i)] 
  $R$ is a solution of the YBE on $[N]$;
  
  \item[(ii)] 
 $R$ determines a single-vertex $3$-graph $\bF_R^+$:
 \begin{align*}
 \bF_R^+=
 \left\langle
e_\ft^i,\, \ft\in[N],\, 1\le i\le 3;
\begin{array}{l}
 e_{\fs}^ie_{\ft}^j=e_{\ft'}^j e_{\fs'}^i \text{ for }1\le i<j\le 3\\
 \text{if }  R(\fs,\ft)=(\ft',\fs'),\ \fs,\ft,\fs',\ft'\in[N]
\end{array}
\right\rangle.
\end{align*}

 \item[(iii)] 
 For every integer $k\ge 3$,
$R$ determines a single-vertex $k$-graph $\bF_R^+$: 
\begin{align*}
 \bF_R^+=\left\langle e_\ft^i,\, \ft\in[N],\, 1\le i\le k;
 \begin{array}{l}
 e_{\fs}^ie_{\ft}^j=e_{\ft'}^j e_{\fs'}^i\text{ for }1\le i<j\le k\\
\text{if }  R(\fs,\ft)=(\ft',\fs'),\ \fs,\ft,\fs',\ft'\in[N]
\end{array}\right\rangle.
 \end{align*}
 \end{itemize}
We call $\bF_R^+$ in (iii) the {\rm associated $k$-graph of $R$}. 
\end{thm}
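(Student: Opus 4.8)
The plan is to establish the cycle (i) $\Rightarrow$ (iii) $\Rightarrow$ (ii) $\Rightarrow$ (i), since (iii) $\Rightarrow$ (ii) is immediate (take $k=3$), and the only substantive implications are (i) $\Rightarrow$ (iii) and (ii) $\Rightarrow$ (i). The underlying dictionary is the one already set up in Section \ref{S:ps}: a family $\{\theta_{ij}\}$ of bijections determines a single-vertex $k$-graph precisely when the associated family $\vartheta = \{\vartheta_{ij}\}$ (where $\vartheta_{ij} = \sigma_{j,i}\circ\theta_{ij}$) satisfies the generalized QYBE \eqref{E:kYB}, and by Observation \ref{O:3-k} it suffices to check this on every triple of colours, i.e.\ it reduces to the $k=3$ case.

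First I would treat (i) $\Rightarrow$ (iii). Given a YBE solution $R$ on $[N]$, define, for each pair $1\le i<j\le k$, the bijection $\theta_{ij}: [N]\times[N]\to[N]\times[N]$ by $\theta_{ij} = \sigma\circ R$ (so that $e^i_\fs e^j_\ft = e^j_{\ft'}e^i_{\fs'}$ iff $R(\fs,\ft)=(\ft',\fs')$ matches the stated relations), whence $\vartheta_{ij} = \sigma_{j,i}\circ\theta_{ij} = \sigma\circ\sigma\circ R = R$ for every pair $i<j$. Now fix a triple $1\le i<j<\fk\le k$ and consider the restriction to the three colours $i,j,\fk$; on the ambient product $[N]^3$ (thinking of $\vartheta_{ij}$ as acting on coordinates $i,j$ and fixing coordinate $\fk$, etc.), equation \eqref{E:kYB} for this triple reads $\vartheta_{ij}\vartheta_{i\fk}\vartheta_{j\fk} = \vartheta_{j\fk}\vartheta_{i\fk}\vartheta_{ij}$, which with all $\vartheta_{ij}=R$ becomes exactly $R_{12}R_{13}R_{23} = R_{23}R_{13}R_{12}$ — and this is the QYBE for $r=\sigma R$, equivalently (via Definition \ref{D:YBE}(iii)) the statement that $R$ satisfies the YBE, which is our hypothesis. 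Hence \eqref{E:kYB} holds for every triple; by Observation \ref{O:3-k} (and the fact that \eqref{E:kYB} characterizes single-vertex $k$-graphs), the family determines a single-vertex $k$-graph, which is by construction the presented semigroup $\bF_R^+$. The one point needing care is the bookkeeping translating between the YBE form \eqref{E:YBE} (with $R_{12}R_{23}R_{12}=R_{23}R_{12}R_{23}$) and the QYBE form via the flip $\sigma$, and keeping straight that when $\vartheta_{ij}$ is viewed as acting on $[N]^3$ with a skipped coordinate it really does reproduce the leg notation $R_{12},R_{13},R_{23}$ in the correct order; this is a routine but genuinely necessary translation, and is essentially the content of the remark preceding \eqref{E:kYB}.

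Next, (ii) $\Rightarrow$ (i): if $R$ determines a single-vertex $3$-graph via the stated relations, then setting $\theta_{12}=\theta_{13}=\theta_{23}=\sigma\circ R$ we get $\vartheta_{12}=\vartheta_{13}=\vartheta_{23}=R$, and the fact that this is a single-vertex $3$-graph means (by \eqref{E:kYB} with $k=3$, $i=1,j=2,\fk=3$) that $R_{12}R_{13}R_{23}=R_{23}R_{13}R_{12}$ on $[N]^3$, i.e.\ $r=\sigma R$ is a QYBE solution, i.e.\ $R$ is a YBE solution on $[N]$ — which is (i). Finally (iii) $\Rightarrow$ (ii) is trivial (specialize $k=3$), completing the cycle; and the naming of $\bF_R^+$ in (iii) as ``the associated $k$-graph of $R$'' is then well-posed.

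I do not expect a serious obstacle here: the theorem is, as the text says, essentially a repackaging of Eq.~\eqref{E:kYB} together with Observation \ref{O:3-k}, and the mathematical content is entirely carried by those two earlier results. The only mildly delicate point — the ``main obstacle,'' such as it is — is making the index/flip bookkeeping airtight: verifying that the relations $e^i_\fs e^j_\ft = e^j_{\ft'}e^i_{\fs'}$ with $R(\fs,\ft)=(\ft',\fs')$ correspond under the paper's conventions to $\theta_{ij}=\sigma R$ and hence $\vartheta_{ij}=R$ uniformly in $i<j$, and that \eqref{E:kYB} with this substitution is literally the QYBE for $\sigma R$, hence (by Definition \ref{D:YBE}(iii)) equivalent to the YBE for $R$. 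Once that identification is stated cleanly, all three implications are one-line consequences of Observation \ref{O:3-k} and the characterization \eqref{E:kYB}.
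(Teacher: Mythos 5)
Your overall architecture is exactly the paper's: reduce everything to the dictionary of Subsection \ref{SS:single} (a family $\{\theta_{ij}\}$ gives a single-vertex $k$-graph iff $\vartheta=\{\sigma_{j,i}\circ\theta_{ij}\}$ satisfies \eqref{E:kYB}), check \eqref{E:kYB} on a single triple of colours, and get (ii) $\Leftrightarrow$ (iii) from Observation \ref{O:3-k}. However, there is a concrete error at precisely the step you single out as the delicate one, and as written it makes the key equivalence come out wrong. Under the convention ($\theta$-CR), the relation $e^i_\fs e^j_\ft = e^j_{\ft'}e^i_{\fs'}$ holds exactly when $\theta_{ij}(\fs,\ft)=(\ft',\fs')$; matching this against the relations in the statement, which impose $e^i_\fs e^j_\ft = e^j_{\ft'}e^i_{\fs'}$ when $R(\fs,\ft)=(\ft',\fs')$, forces $\theta_{ij}=R$, not $\theta_{ij}=\sigma\circ R$. (With your choice $\theta_{ij}=\sigma\circ R$ the induced relation would be $e^i_\fs e^j_\ft = e^j_{\fs'}e^i_{\ft'}$, which is not the presented semigroup.) Consequently $\vartheta_{ij}=\sigma_{j,i}\circ\theta_{ij}=\sigma R$ for every pair, and \eqref{E:kYB} becomes $r_{12}r_{13}r_{23}=r_{23}r_{13}r_{12}$ for $r=\sigma R$, i.e.\ the QYBE for $\sigma R$, which by Definition \ref{D:YBE}(iii) is equivalent to the YBE for $R$. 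This is the paper's chain of equivalences.

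Your version, with $\vartheta_{ij}=R$, turns \eqref{E:kYB} into $R_{12}R_{13}R_{23}=R_{23}R_{13}R_{12}$, which is the QYBE for $R$ itself; your assertion that this ``is the QYBE for $r=\sigma R$'' is false, and the QYBE for $R$ is not equivalent to the YBE for $R$ (the two are interchanged by composing with the flip, which is the whole point of Definition \ref{D:YBE}(iii)). So the proof as written establishes ``(ii) holds iff $R$ satisfies the QYBE,'' which is not the theorem. The repair is purely local --- replace $\theta_{ij}=\sigma\circ R$ by $\theta_{ij}=R$ and carry $\vartheta_{ij}=\sigma R$ through --- after which your argument coincides with the paper's proof. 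One minor further remark: the paper proves (i) $\Leftrightarrow$ (ii) directly and then gets (ii) $\Leftrightarrow$ (iii) from Observation \ref{O:3-k} (using that all three $\theta$'s on any triple of colours are equal), rather than running your cycle (i) $\Rightarrow$ (iii) $\Rightarrow$ (ii) $\Rightarrow$ (i); both decompositions are fine and rest on the same two ingredients.
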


\begin{proof}
(i) $\Leftrightarrow$ (ii): 
From Subsection \ref{SS:single}, 
$R$ is a solution of the YBE on $[N]$, if and only if 
$\sigma R$ satisfies the QYBE on $[N]$ (cf. Definition \ref{D:YBE} (iii)), where $\sigma$ is the flip operator on $[N]\times [N]$,
if and only if $\vartheta=\{\vartheta_{12}=\sigma R, \vartheta_{13}=\sigma R, \vartheta_{23}=\sigma R\}$ satisfies 
satisfies Eq.~\eqref{E:kYB} on $[N]^3$, 
if and only if $\bF_R^+$ given in (ii) is a $3$-graph.  

(ii) $\Leftrightarrow$ (iii): This follows from Observation \ref{O:3-k}. 
\end{proof}

Note that $N$ above  can be infinite. 

\begin{rem}
\label{R:2YB}

(i) Note that $R$ is independent of the `colours' of its associated $k$-graph.  

(ii) Let $X$ be an arbitrary set indexed by $[N]$, say $X=\{e_\fs:\fs\in[N]\}$. Let $\bar R:X^2\to X^2$ be defined by
\[
\bar R(e_\fs,e_\ft)=(e_{\ft'},e_{\fs'})\quad \text{whenever}\quad R(\fs,\ft)=(\ft',\fs').
\]
Obviously, $R$ satisfies the YBE on $[N]$ if and only if $\bar R$ satisfies the YBE on $X$. So we
often identity $R$ with $\bar R$. 

(iii) As seen from Section \ref{S:ps}, any bijection (not necessarily satisfying the YBE) $R$ on $X^2$ determines a (single-vertex) $2$-graph.  
\end{rem}

\begin{rem}
Construction $1^\circ$ and Construction $2^\circ$ in Section \ref{S:YBE} are special cases of the multiplication of the second 
cohomological group in \cite{FS01} and the Cartesian product of $k$-graphs in \cite{KP00}, respectively, 
while Construction $3^\circ$ there gives a new construction of $k$-graphs. 
\end{rem}

Due to Theorem \ref{T:YBkGr}, in order to study the relations between the YBE and $k$-graphs,
without loss of generality we will assume in \eqref{E:Fth} that 
\begin{align}
\label{E:ass}
\framebox[1.1\width]{$k=3,\, N_1=N_2=N_3=:N,\, \theta_{12}=\theta_{13}=\theta_{23}=:R$}
\end{align}
in the rest of the paper, unless otherwise specified. 

\medskip
In the subsection below, we will construct  an infinite family of `large' YBE solutions.

\subsection{A New Family of YBE Solutions}
\label{SS:newfamily}

Let $R:[N]^2\to [N]^2$ be an arbitrary bijection, and $E_i$ be a set indexed by $[N]$, say 
\[
E_i=\{e^i_\fs:\fs\in [N]\} \quad (1\le i\le 3).
\] 
Although $R$ is not necessarily a solution of the YBE, 
one can still construct a unital semigroup $\bF_R^+$ as in Theorem \ref{T:YBkGr} (ii). 

Let $u$ and $v$ be two words in $\bF_N^+$ with $|u|=l$ and $|v|=m$. By the commutation relations one has
\[
e_u^i e^j_v=e^j_{v'} e^i_{u'}\quad (1\le i<j\le 3).
\]
Thus the bijection $R$ on $[N]^2$ induces two bijections on `higher levels':
\begin{align*}
 R^{l, m}:\, &[N^l]\times [N^{m}]\to [N^{m}]\times [N^{l}],\quad R^{l, m}(u,v)=(v',u'),\\
 \ R_{ij}^{l, m}: \,&E_i^l\times E_j^{m}\to E_j^{m}\times E_i^l,\quad R_{ij}^{l, m}(e^i_u,e^j_v)=(e^j_{v'},e_{u'}^i).
\end{align*}
Here $E_i^l=\{\mu\in\Fth: d(\mu)=l\ep_i\}$ for $1\le i\le k$ and $l\ge 1$. 
Obviously, $R^{1,1}=R$.

Using the relations between product systems over graphs and $k$-graphs, 
one sees from Observation \ref{O:subps} that 
$R$ determines a single-vertex $3$-graph, if and only if, for all positive integers $l, m,n$, 
the triple $\{R^{l,m}, R^{l,n},R^{m,n}\}$ determines a 
single-vertex $3$-graph by letting $S=\langle l\epsilon_1, m\epsilon_2, n\epsilon_3\rangle$. 
More precisely, we have the following. 

\begin{obs}
\label{O:ext}
Keep the same notation as above. 
 For integers $l,m,n\ge 1$, let 
\begin{align*}
\bE_1&:=E_1^l, \ \bE_2:=E_2^m,\  \bE_3:=E_3^n,\\
\theta_{12}&=R^{l,m}, \ \theta_{13}=R^{l,n}, \ \theta_{23}=R^{m,n},\\
\vartheta_{12}&=\sigma_{2,1}\theta_{12}, \ \vartheta_{13}=\sigma_{3,1}\theta_{13}, \ \vartheta_{23}=\sigma_{3,2}\theta_{23}.
\end{align*}
Then $\bF_R^+$ is a single vertex $3$-graph, 
if and only if  $\{\vartheta_{12},\,  \vartheta_{13},\, \vartheta_{23}\}$ satisfies Eq.~\eqref{E:kYB}, 
if and only if
 $\bF_\theta^+=\big\langle\bE_1,\bE_2,\bE_3; \theta_{12}, \theta_{13}, \theta_{23}\big\rangle$ 
 (refer to \eqref{E:Fth} for notation) is a single vertex $3$-graph. 
\end{obs}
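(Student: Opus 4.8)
The plan is to reduce Observation \ref{O:ext} to the already-established machinery of Observation \ref{O:subps} (the restriction of a $k$-graph to a rank-$n$ sub-semigroup of $\bN^k$ is an $n$-graph) together with the characterization \eqref{E:kYB} of single-vertex $3$-graphs. First I would set up the sub-semigroup $S=\langle l\epsilon_1, m\epsilon_2, n\epsilon_3\rangle\subseteq\bN^3$; since $l,m,n\ge 1$, this is a sub-semigroup containing $\mathbf 0$ of rank $3$, so by Observation \ref{O:subps} the restriction $(\bF_R^+)_S=\{\lambda\in\bF_R^+: d(\lambda)\in S\}$ is a $3$-graph whenever $\bF_R^+$ itself is. The key point is that this restricted $3$-graph is, by construction, precisely the single-vertex $3$-graph generated by the generators $\bE_1=E_1^l$, $\bE_2=E_2^m$, $\bE_3=E_3^n$ with commutation isomorphisms $\theta_{12}=R^{l,m}$, $\theta_{13}=R^{l,n}$, $\theta_{23}=R^{m,n}$: indeed the defining identity $e_u^i e_v^j = e_{v'}^i e_{u'}^j$ for words $u,v$ of the relevant lengths is exactly the statement $R^{l,m}(u,v)=(v',u')$ etc., which is how the higher-level maps $R^{l,m}$ were defined above. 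Hence $(\bF_R^+)_S = \bF_\theta^+=\langle\bE_1,\bE_2,\bE_3;\theta_{12},\theta_{13},\theta_{23}\rangle$ as semigroups-with-degree-map (after rescaling the degree $\bN^3$-grading by $(l,m,n)$).

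Next I would chain the equivalences. The equivalence ``$\bF_\theta^+$ is a single-vertex $3$-graph $\iff$ $\{\vartheta_{12},\vartheta_{13},\vartheta_{23}\}$ satisfies \eqref{E:kYB}'' is simply the general fact, recorded in Subsection \ref{SS:single} (that \eqref{E:kYB} exactly characterizes single-vertex $k$-graphs), applied with $k=3$ and the data $\bE_i$, $\theta_{ij}$, $\vartheta_{ij}=\sigma\theta_{ij}$ — nothing special about $R$ being a YBE solution is used here, only that the $\theta_{ij}$ are bijections of the appropriate products. So it remains to prove ``$\bF_R^+$ is a single-vertex $3$-graph $\iff$ $\bF_\theta^+$ is a single-vertex $3$-graph''. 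The forward direction is the Observation \ref{O:subps} argument just described. For the converse, I would invoke Observation \ref{O:3-k}: $\bF_R^+$ is a single-vertex $3$-graph iff \eqref{E:kYB} holds for the family $\{\vartheta_{12}=\vartheta_{13}=\vartheta_{23}=\sigma R\}$ on $[N]^3$, i.e. iff $\sigma R$ satisfies the QYBE on $[N]$. I would then argue that the $(l,m,n)$-version of \eqref{E:kYB} for $\{\sigma_{2,1}R^{l,m},\sigma_{3,1}R^{l,n},\sigma_{3,2}R^{m,n}\}$ forces the $(1,1,1)$-version by a restriction argument in the other direction: a single-vertex $3$-graph structure on $\bF_\theta^+$ supplies, via the factorization property applied to length-one sub-words, consistent factorizations for the original generators, so $\bF_R^+$ is forced to be associative/cancellative in the required way. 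Concretely, the factorization property of $\bF_\theta^+$ restricted to the $\bN^3$-degree $(l,m,n)$-cube yields exactly the relation that $\{R^{l,m},R^{l,n},R^{m,n}\}$ is compatible, and since these higher maps are built out of $R$, compatibility at level $(1,1,1)$ is a special case obtained by taking $l=m=n=1$.

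The step I expect to be the main obstacle is making the converse direction genuinely rigorous rather than circular: one must be careful that ``$\bF_\theta^+$ is a $3$-graph'' really does feed back to ``$\bF_R^+$ is a $3$-graph'' and not merely that ``$\bF_R^+$ is a $3$-graph on the sub-lattice $(l\bN,m\bN,n\bN)$''. The clean way to handle this is to observe that a word in $\bF_R^+$ of degree $(l,m,n)$ has a well-defined factorization into an $\bE_1$-letter, an $\bE_2$-letter, an $\bE_3$-letter (by definition of $R^{l,m}$ etc. as the induced higher-level maps), and that the consistency of these factorizations across the three ways of reordering a degree-$(l,m,n)$ word is logically equivalent to \eqref{E:kYB} for the $\vartheta$-family on $[N]$ — because both are encoded by the single identity $R^{l,m}R^{l,n}R^{m,n}=R^{m,n}R^{l,n}R^{l,m}$ after peeling off the flip maps, and that identity for arbitrary $l,m,n\ge 1$ is equivalent to its $l=m=n=1$ instance once one knows $R^{l,m}$ is the multi-letter extension of $R$ (an easy induction, which the paper has already implicitly set up). Therefore the whole proof is a bookkeeping assembly of Observations \ref{O:subps} and \ref{O:3-k} plus the characterization \eqref{E:kYB}, with the only care needed being the precise identification $(\bF_R^+)_{\langle l\epsilon_1,m\epsilon_2,n\epsilon_3\rangle}\cong\bF_\theta^+$.
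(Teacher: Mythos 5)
Your forward direction and the identification $(\bF_R^+)_S\cong\bF_\theta^+$ for $S=\langle l\epsilon_1,m\epsilon_2,n\epsilon_3\rangle$ are exactly how the paper justifies this observation (via Observation \ref{O:subps}), and the equivalence between ``$\bF_\theta^+$ is a $3$-graph'' and ``the $\vartheta_{ij}$ satisfy \eqref{E:kYB}'' is indeed just the general characterization from Section \ref{S:ps}. That much is fine.

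The gap is in your converse. You propose to pass from the $(l,m,n)$-instance of \eqref{E:kYB} back to the $(1,1,1)$-instance by ``a restriction argument in the other direction'' and by ``taking $l=m=n=1$''. Neither move exists when $l,m,n$ are fixed: the factorization property of $\bF_\theta^+$ only governs degrees lying in $S$, so it says nothing about degree-$(1,1,1)$ words, and you cannot specialize a hypothesis assumed for a single triple. In fact the fixed-triple implication is false. Take $N=3$ and $R(x,y)=(f(y),g(x))$ with $f=(1\,2)$, $g=(1\,3)$; by Example \ref{Eg:2.7}(i), $R$ is a YBE solution iff $fg=gf$, which fails, so $\bF_R^+$ is not a $3$-graph. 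For $(l,m,n)=(2,1,1)$ one has $R^{2,1}=R_{12}R_{23}$, and \eqref{E:kQYB} for $\{\theta_{12}=\theta_{13}=R^{2,1},\,\theta_{23}=R\}$ reads
\[
R_{23}R_{34}R_{12}R_{23}R_{34}=R_{12}R_{23}R_{34}R_{12}R_{23}
\]
on $[3]^4$; for this $R$ the two sides send $(a,b,c,d)$ to $(f^3(d),f^2g(c),g^2(a),g^2(b))$ and $(f^3(d),gf^2(c),g^2(a),g^2(b))$ respectively, which coincide because $f^2=\id$, even though $fg\ne gf$. So this $\bF_\theta^+$ is a single-vertex $3$-graph while $\bF_R^+$ is not. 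The equivalence is only correct when the right-hand conditions are quantified over all $l,m,n\ge 1$ --- which is how the sentence preceding the observation phrases it --- and in that reading the converse is immediate from the single instance $l=m=n=1$ since $R^{1,1}=R$; no further argument is needed, and none is possible for a fixed triple.
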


Notice that it is not required that $\bF_\theta^+$ in Observation \ref{O:ext} satisfy condition \eqref{E:ass}. 
We are now ready to give our new construction as promised, which describes an infinite family of big YBE solutions from a given one,
and generalizes the main result of \cite{BC14} (cf. also \cite{CJdR10}).

\begin{thm}[\textsf{A New Construction}]
\label{T:cons}
A bijection $R:[N]^2\to [N]^2$ is a YBE solution, if and only if  its $n$th level $R^{n,n}$ is a YBE solution on $[N^n]$ for any integer $n\ge 1$. 

Furthermore, the following holds true:
\begin{itemize}
\item[(i)] if $R$ is involutive, then so is $R^{n,n}$ for every $n\ge 1$;

\item[(ii)] if $R$ is of derived type, then so is $R^{n,n}$ for every $n\ge 1$. 
\end{itemize}
\end{thm}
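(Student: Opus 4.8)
The plan is to deduce Theorem \ref{T:cons} from the equivalence already packaged in Observation \ref{O:ext}. The main equivalence (``$R$ is a YBE solution iff $R^{n,n}$ is'') is the cleanest: by Theorem \ref{T:YBkGr}, $R$ being a YBE solution is equivalent to $\bF_R^+$ being a single-vertex $3$-graph. Taking $l=m=n$ in Observation \ref{O:ext} shows that this holds iff $\{\vartheta^{n,n}_{12},\vartheta^{n,n}_{13},\vartheta^{n,n}_{23}\}$ (all three equal to $\sigma R^{n,n}$) satisfies Eq.~\eqref{E:kYB} on $[N^n]^3$, which by Theorem \ref{T:YBkGr} again is exactly the statement that $R^{n,n}$ is a YBE solution on $[N^n]$. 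For the ``only if'' direction of the main claim this gives all $n$ at once; for the ``if'' direction one just takes $n=1$ since $R^{1,1}=R$. So the first paragraph of the proof is essentially an invocation of the two observations/theorem with the specialization $l=m=n$.

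For part (i), I would argue directly on words. Suppose $R$ is involutive, so $R^2=\id_{[N]^2}$, equivalently (Remark \ref{R:2YB}(ii) / Lemma \ref{L:basic}(ii)) the commutation relations in $\bF_R^+$ are \emph{symmetric}: $e^i_\fs e^j_\ft = e^j_{\ft'}e^i_{\fs'}$ iff $e^i_{\ft'}e^j_{\fs'}$ hold no — more carefully, $R(\fs,\ft)=(\ft',\fs')$ implies $R(\ft',\fs')=(\fs,\ft)$. Now $R^{n,n}(u,v)=(v',u')$ is \emph{defined} by the single identity $e^1_u e^2_v = e^2_{v'} e^1_{u'}$ in $\bF_R^+$ (using colours $1,2$, say), obtained by $n^2$ applications of the elementary relation. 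Applying the reverse rewriting — using the relation in the other direction, which is legitimate precisely because $R$ is involutive — transforms $e^2_{v'}e^1_{u'}$ back to $e^1_u e^2_v$, hence $R^{n,n}(v',u')=(u,v)$, i.e. $(R^{n,n})^2=\id$. One should phrase this via the fact that $\bF_R^+$ has unique normal forms $e^1_{w_1}e^2_{w_2}e^3_{w_3}$ (stated in Subsection \ref{SS:single}), so that the word $e^1_u e^2_v$ has a unique representative of the form $e^2_{(\cdot)}e^1_{(\cdot)}$, making $R^{n,n}$ genuinely well-defined and its inverse computable by the same cancellative semigroup; involutivity of $R$ then says the ``downward'' and ``upward'' rewrites are mutually inverse letter-by-letter, hence block-by-block.

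For part (ii), recall (Remark \ref{R:lrderived}) that $R$ is of derived type iff $R(x,y)=(\beta_x(y),x)$, i.e. the second output coordinate is always the first input. I would check this property is preserved under the word construction: if $e^1_u e^2_v = e^2_{v'}e^1_{u'}$ in $\bF_R^+$ and every elementary relation $e^1_\fs e^2_\ft=e^2_{?}e^1_{\fs}$ leaves the $E_1$-letter unchanged, then pushing the whole block $e^1_u$ (length $n$) past $e^2_v$ (length $n$) leaves $e^1_u$ intact, so $u'=u$ and $R^{n,n}(u,v)=(v',u)$, which is exactly the derived-type form on $[N^n]$. (The symmetric variant $R(x,y)=(y,\beta_y(x))$ is handled identically, or by invoking Remark \ref{R:lrderived}.) This amounts to observing that the $E_1$-content of a word is a conserved quantity under $(\theta\text{-CR})$ when $\theta$ fixes first coordinates.

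The step I expect to be the real obstacle is making the word-level arguments in (i) and (ii) fully rigorous, namely justifying that $R^{n,n}$ is well-defined and that ``rewriting $e^1_u e^2_v$ into $E_2E_1$ order'' is an unambiguous operation whose inverse is given by rewriting in the other direction. This rests on the cancellation / unique-factorization property of $\bF_R^+$, which here is not automatic because we have \emph{not} assumed $R$ satisfies the YBE in the ``furthermore'' clauses — wait, in fact we have, since (i) and (ii) are stated under the standing hypothesis that $R$ is a YBE solution, so $\bF_R^+$ is a genuine $3$-graph and inherits cancellation and unique normal forms from Subsection \ref{SS:single}. Given that, the arguments go through, and the bookkeeping (tracking which colour pair one rewrites and that the intermediate words stay in the two-colour sub-semigroup, which is itself a $2$-graph by Observation \ref{O:subps}) is the only thing requiring care.
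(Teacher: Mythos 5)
Your first paragraph (the main equivalence via Theorem \ref{T:YBkGr} and Observation \ref{O:ext} with $l=m=n$) and your argument for (ii) are correct and essentially coincide with the paper's proof: for (ii) the paper likewise observes that when every elementary relation fixes the $E_1$-letter, the block $e^1_u$ passes through $e^2_v$ unchanged, so $u'=u$.

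Part (i) contains a genuine gap. Your key step is: ``the reverse rewriting transforms $e^2_{v'}e^1_{u'}$ back to $e^1_u e^2_v$, hence $R^{n,n}(v',u')=(u,v)$.'' This is a non sequitur. Rewriting the word $e^2_{v'}e^1_{u'}$ back into colour-$1$-first form computes $(R^{n,n})^{-1}(v',u')$, which equals $(u,v)$ for \emph{any} bijection $R$ — involutivity plays no role, and nothing about $(R^{n,n})^2$ follows. The quantity you need, $R^{n,n}(v',u')$, is by definition read off from the \emph{different} word $e^1_{v'}e^2_{u'}$ (colour $1$ now carrying $v'$, colour $2$ carrying $u'$), and relating its normal form to that of $e^2_{v'}e^1_{u'}$ is exactly where the content of the statement lives; ``mutually inverse letter-by-letter, hence block-by-block'' begs that question. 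Cancellation and unique factorization of $\bF_R^+$ make $R^{n,n}$ well defined, but they do not by themselves yield $(R^{n,n})^2=\id$.

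Two ways to close the gap. The paper's route: write $R^{n,n}$ explicitly as the product of leg operators $(R_{n\,n+1}\cdots R_{12})\cdots(R_{2n-1\,2n}\cdots R_{n\,n+1})$ on $[N]^{2n}$ and telescope $(R^{n,n})^2$ to the identity using $R_{ij}^2=\id$ together with the commutation \eqref{E:comm} of leg operators with disjoint legs; the disjoint-leg commutations are indispensable and are precisely what your block-by-block claim omits. A word-level repair closer to your intent: observe that $R^2=\id_{[N]^2}$ holds if and only if the colour swap $e^1_x\leftrightarrow e^2_x$ extends to a semigroup automorphism of the two-coloured sub-$2$-graph of $\bF_R^+$ (the defining relation $e^1_\fs e^2_\ft=e^2_{\ft'}e^1_{\fs'}$ is carried to $e^1_{\ft'}e^2_{\fs'}=e^2_\fs e^1_\ft$, which is again a defining relation exactly when $R(\ft',\fs')=(\fs,\ft)$). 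Applying this automorphism to $e^1_ue^2_v=e^2_{v'}e^1_{u'}$ gives $e^1_{v'}e^2_{u'}=e^2_ue^1_v$, which by uniqueness of normal forms is the statement $R^{n,n}(v',u')=(u,v)$. Either of these completes (i); as written, your proof does not.
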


\begin{proof}
The first part is immediate from Theorem \ref{T:YBkGr} and Observation \ref{O:ext}.
We now show that they have the required properties. 

(i) Suppose that $R$ is involutive. 
Identify $(x_1\cdots x_n, y_1\cdots y_n)\in [N^n]^2$ with the element $(x_1,\ldots, x_n, y_1,\ldots, y_n)$ in $[N]^{2n}$. 
Then
\[
R^{n,n}
=(R_{n\, n+1}\cdots R_{12})(R_{n+1\, n+2}\cdots R_{23})\cdots (R_{2n-1\, 2n}\cdots R_{n\, n+1}),
\]
where $R_{ij}$'s are the leg notations of $R$ on $[N]^{2n}$. 
It is apparent that 
\begin{align}
\label{E:comm}
R_{m,n}R_{ij}=R_{ij}R_{m,n}\text{ for all }m<n,\, i<j \text{ with } [m,n]\cap[i,j]=\mt.
\end{align}
Then we obtain that 
\begin{align*}
(R^{n,n})^2
=&(R_{n\, n+1}\cdots R_{12})\cdots (R_{2n-1\, 2n}\cdots R_{n\, n+1})\cdot\\ 
& (R_{n\, n+1}\cdots R_{12})\cdots (R_{2n-1\, 2n}\cdots R_{n\, n+1})\\
=&(R_{n\, n+1}\cdots R_{12})\cdots (R_{2n-1\, 2n}\cdots R_{n+1\, n+2})\cdot R_{n\, n+1}R_{n\, n+1}\cdot \\
&(R_{n-1\, n}\cdots R_{12})\cdots (R_{2n-1\, 2n}\cdots R_{n\, n+1})\\
=&(R_{n\, n+1}\cdots R_{12})\cdots (R_{2n-2\, 2n-1}\cdots R_{n-1\, n}) (R_{2n-1\, 2n}\cdots R_{n+1\, n+2})\cdot \\
&(R_{n-1\, n}\cdots R_{12})\cdots (R_{2n-1\, 2n}\cdots R_{n\, n+1})\quad (\text{as }R_{n\, n+1}^2=\id).
\end{align*}
From \eqref{E:comm}, one has that $R_{n-1\, n}$ and $R_{2n-1\, 2n}\cdots R_{n+1\, n+2}$ commute. So we switch their positions to yield
\begin{align*}
(R^{n,n})^2
\stackrel{(a)}=&(R_{n\, n+1}\cdots R_{12})\cdots (R_{2n-2\, 2n-1}\cdots R_{n-1\, n}\cdot R_{n-1\, n}) (R_{2n-1\, 2n}\cdots R_{n+1\, n+2})\cdot \\
&(R_{n-2\, n-1}\cdots R_{12})\cdots (R_{2n-1\, 2n}\cdots R_{n\, n+1})\\
\stackrel{b}=&(R_{n\, n+1}\cdots R_{12})\cdots (R_{2n-2\, 2n-1}\cdots R_{n-2\, n-1}) (R_{2n-1\, 2n}\cdots R_{n+1\, n+2})\cdot \\
&(R_{n-2\, n-1}\cdots R_{12})\cdots (R_{2n-1\, 2n}\cdots R_{n\, n+1})\quad(\text{as }R_{n-1\, n}^2=\id)\\
\stackrel{(c)}=&\cdots (\text{repeating steps }(a) \text{ and } (b))\\
\stackrel{(d)}=&(R_{n\, n+1}\cdots R_{23})\cdots (R_{2n-2\, 2n-1}\cdots R_{n+1\, n+2})\cdot \\
&(R_{n+1\, n+2}\cdots R_{23})\cdots (R_{2n-1\, 2n}\cdots R_{n\, n+1})\\
=&\cdots (\text{repeating step }(a)\text{ - }\text{step } (d))\\
=&R_{n\, n+1}\cdots R_{2n-1\, 2n}\cdot (R_{2n-1\, 2n}\cdots R_{n\, n+1})\\
=&\id.
\end{align*}

(ii) This can be easily checked from the definition of derived type solutions. Indeed, without loss of generality let us assume that $R(x,y)=(\alpha_x(y), x)$. Then 
\[
R^{n,n}(x_1\cdots x_n, y_1\cdots y_n)
=\big(\alpha_{x_1}\circ\cdots \circ\alpha_{x_n}(y_1)\cdots \alpha_{x_1}\circ\cdots \circ\alpha_{x_n}(y_n), x_1\cdots x_n\big).
\]
This ends the proof.
\end{proof}

Note that one also has 
\[
R^{n,n}
=( R_{n\, n+1}\cdots R_{2n-1\, 2n})(R_{23}\cdots R_{n+1\, n+2})\cdots (R_{12}\cdots R_{n\, n+1}).
\]
If we use this formula, the proof is completely similar. 

\begin{rem}
Making use of the brace theory \cite{CJO14, Rum07}, Bachiller-Ced\'o handle the special case of Theorem \ref{T:cons} in \cite[Theorem 3.4]{BC14}, 
where $R$ is required to be non-degenerate and involutive. 
In this case, by Theorem \ref{T:cons} (i) above, $R^{n,n}$ (which is denoted as $R^{(n)}$ in \cite{BC14}) is also involutive
(note that the proofs of \ref{T:cons} (i) and (ii) do not require that $R$ satisfies the YBE).
Let $R^{n,n}(\bar{x},\bar y)=(f_{\bar x}(\bar y),g_{\bar y}(\bar x))$ for all $\bar x, \bar y\in[N^n]$. Then 
$g_{\bar y}(\bar x)=f_{f_{\bar x}(\bar y)}^{-1}(\bar x)$ by Lemma \ref{L:basic}. So it is suffices to determine $f_{\bar x}(\bar y)$. 

Again, let $R(x,y)=(\alpha_x(y), \beta_y(x))$ for all $x,y\in [N]$. 
By Corollary \ref{C:char}, $R$ is a YBE solution if and only if $\alpha$ can be extended an action of $G_R^+$ on $X$
(and so $\beta$ is a right action of $G_R^+$ on $X$). Heuristically, we 
can work in the semigroup $G_R^+$. 
Notice that if $\bar x=x_1\cdots x_n$ and $\bar y=y_1\cdots y_n$ then 
\[
{\bar x}y=\alpha_{\bar x}(y)\Pi_{i=2}^n \beta_{\alpha_{x_i\cdots x_n}(y)}(x_{i-1}) \beta_y(x_n).
\]
So 
\[
\beta_y(\bar x)=\Pi_{i=2}^n \beta_{\alpha_{x_i\cdots x_n}(y)}(x_{i-1}) \beta_y(x_n).
\]
\[
\alpha_{\beta_{y_1\cdots y_{i-1}}(\bar x)}(y_i)=\alpha_{\beta_{y_{i-1}}\cdots \beta_{y_1}(\bar x)}(y_i)
\]
Then 
\begin{align*}
\bar x \bar y
&=\bar x y_1y_2y_n=\alpha_{\bar x}(y_1)\beta_{y_1}(\bar x)y_2\cdots y_n\\
&=\alpha_{\bar x}(y_1)\alpha_{\beta_{y_1}(\bar x)}(y_2)\beta_{y_2}(\beta_{y_1}(\bar x))y_3\cdots y_n\\
&=\alpha_{\bar x}(y_1)\alpha_{\beta_{y_1}(\bar x)}(y_2)\beta_{y_1y_2}(\bar x)y_3\cdots y_n\\
&=\alpha_{\bar x}(y_1)\alpha_{\beta_{y_1}(\bar x)}(y_2)\alpha_{\beta_{y_1y_2}(\bar x)}(\bar x)\beta_{y_1y_2y_3}(y_4)y_5\cdots y_n\\
&=\alpha_{\bar x}(y_1)\Pi_{i=2}^{n}\alpha_{\beta_{y_1\cdots y_{i-1}}(\bar x)}(y_i)\cdot \beta_{y_1\cdots y_{n}}(\bar x).
\end{align*}
One can check that the element $h_i(\bar x,\bar y)$ in \cite[Theorem 3.14]{BC14} coincides with $\alpha_{\beta_{y_1\cdots y_{i-1}}(\bar x)}(y_i)$:
\[
h_i(\bar x,\bar y)=\alpha_{\beta_{y_1\cdots y_{i-1}}(\bar x)}(y_i).
\]
In fact, 
\begin{align*}
& \text{the above identity holds true} \\
&\Longleftrightarrow \alpha_{h_1(\bar x, \bar y)\cdots h_i(\bar x, \bar y)}^{-1}\alpha_{\bar x y_1\cdots y_{i-1}}=\alpha_{\beta_{y_1\cdots y_{i-1}}(\bar x)}\\
&\Longleftrightarrow \alpha_{\bar x y_1\cdots y_{i-1}}= \alpha_{h_1(\bar x, \bar y)\cdots h_{i-1}(\bar x, \bar y)} \alpha_{\beta_{y_1\cdots y_{i-1}}(\bar x)}\\
&\Longleftarrow {\bar x y_1\cdots y_{i-1}}= {h_1(\bar x, \bar y)\cdots h_{i-1}(\bar x, \bar y)} {\beta_{y_1\cdots y_{i-1}}(\bar x)}.
\end{align*}
So it suffices to show that 
\[
 {\bar x y_1\cdots y_{i-1}}= {h_1(\bar x, \bar y)\cdots h_{i-1}(\bar x, \bar y)} {\beta_{y_1\cdots y_{i-1}}(\bar x)}.
\]
To this end, we use induction. We suppose it is true for $i$, and so 
\[
h_i(\bar x,\bar y)=\alpha_{\beta_{y_1\cdots y_{i-1}}(\bar x)}(y_i).
\] 
Now by our inductive assumption, one has 
\begin{align*}
 {\bar x y_1\cdots y_{i}}
 &= {\bar x y_1\cdots y_{i-1}}y_i\\
 &=  {h_1(\bar x, \bar y)\cdots h_{i-1}(\bar x, \bar y)} {\beta_{y_1\cdots y_{i-1}}(\bar x)}y_i\\
 &=  {h_1(\bar x, \bar y)\cdots h_{i-1}(\bar x, \bar y)} \alpha_{{\beta_{y_1\cdots y_{i-1}}(\bar x)}}(y_i)\beta_{y_{i}\cdots y_1}(\bar x)\\
 &=  {h_1(\bar x, \bar y)\cdots h_{i-1}(\bar x, \bar y)} h_i(\bar x,\bar y)\beta_{y_{1}\cdots y_i}(\bar x).
\end{align*}
We are done. 
\end{rem}

\begin{eg}
Let $R(i,j)=(j+1,i)$ for $1\le i,j\le 2$ with addition modulo 2. Then 
\[
R^{3,3}(i_1 i_2 i_3, j_1j_2j_3)=\big((j_1+1)(j_2+1)(j_3+1), i_1i_2i_3\big)
\]
for all $i_1, i_2, i_3, j_1, j_2, j_3\in [2]$. So $R^{3,3}$ is a YBE solution on $[8]$. 
\end{eg}

\subsection{Periodicity of YBE Solutions}

The notion of periodicity of a $k$-graphs is a little bit involved; it essentially reflects periodicity of its infinite path space 
and symmetry of its representations.  
We will not introduce it here. For the interested reader, refer to \cite{DY091, DY09} for all related information which will be needed in the sequel. 

Let $R$ be a given YBE solution on $[N]$. Recall from Theorem \ref{T:YBkGr} and Remark \ref{R:2YB} (iii) that,
for any $k\ge 2$, there is a single-vertex $k$-graph associated to it.

If $N=1$, then $R$ has to be the trivial solution (which is the same as the identity in this case). It is easy to see that the $k$-graph associated
to $R$ is always periodic for any $k\ge 2$ \cite{DY09}. To avoid this trivial case, we always assume that $n>1$ in this subsection. 

\begin{prop}
\label{P:aper}
Let $R:[N]^2\to [N]^2$ be a solution of the YBE. Then its associated $2$-graph is periodic, if and only if its associated $k$-graph is periodic
for any $k\ge 3$.
\end{prop}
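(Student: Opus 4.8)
The plan is to reduce everything to the combinatorics of a single-vertex $k$-graph and to exploit Observation \ref{O:3-k} together with Observation \ref{O:subps}. Recall that periodicity of a single-vertex $k$-graph $\Fth$ is a statement about the existence of a nontrivial element in its ``periodicity group'', equivalently a pair $(\bm,\bn)\in\bN^k\times\bN^k$ with $\bm\ne\bn$ such that the shift maps on infinite paths of degree $\bm$ and $\bn$ agree; this reduces, via the factorization property, to the existence of $p,q\ge 0$ and a bijective correspondence of words coming from the commutation relations. The key point is that the associated $k$-graph $\bF_R^+$ of $R$ has \emph{all} $N_i$ equal to $N$ and \emph{all} $\vartheta_{ij}=\sigma R$ (see \eqref{E:ass}), so any coordinate hyperplane/axis looks the same; periodicity witnessed ``within two colours'' should already be visible within $\Fth$ restricted to those two colours, which by Observation \ref{O:subps} is precisely the associated $2$-graph.

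First I would establish the easy direction: if the associated $k$-graph is periodic for some (equivalently every, by Theorem \ref{T:YBkGr}) $k\ge 3$, then the associated $2$-graph is periodic. A periodic $k$-graph has a nontrivial periodicity pair $(\bm,\bn)$. Because all colours play symmetric roles (same $N$, same $R$), one can transport/project this pair to one supported on just two coordinates, say the first two; restricting $\Fth$ to the sub-semigroup $S=\langle\epsilon_1,\epsilon_2\rangle$ of $\bN^k$ gives, by Observation \ref{O:subps}, the associated $2$-graph, and the projected pair is a nontrivial periodicity pair there. Conversely, if the associated $2$-graph $\langle E_1,E_2;R\rangle$ is periodic with periodicity pair $(p\epsilon_1,q\epsilon_2)$ (or more generally $(\bm,\bn)$ with two-coordinate support), I would promote it to a periodicity pair for the $k$-graph: since $\theta_{ij}=R$ for every pair $i<j$, the relation that makes a word of degree $p\epsilon_1$ correspond bijectively and shift-compatibly to a word of degree $q\epsilon_2$ is exactly the same relation that governs colours $i$ and $j$ for any $i<j$, so the same pair witnesses periodicity of $\bF_R^+$; here the consistency needed to glue the three colours is guaranteed precisely because $R$ solves the YBE (Theorem \ref{T:YBkGr}, Eq.~\eqref{E:kYB}).

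The main obstacle I expect is making the ``projection/transport of the periodicity pair between colours'' rigorous, since a general periodicity pair $(\bm,\bn)$ need not be supported on two coordinates, and one must argue that the homogeneity of $\bF_R^+$ (all $N_i=N$, all $\theta_{ij}=R$) forces a two-coordinate periodicity pair to exist as soon as any periodicity pair does. Concretely I would argue: given $(\bm,\bn)$ with $\bm\ne\bn$, some coordinate $i$ has $m_i\ne n_i$; using that the commutation relations are colour-independent one can ``collapse'' the pair onto coordinates $\{i,j\}$ for a suitably chosen $j$ (or directly produce a pair $(m_i\epsilon_i, n_i\epsilon_j)$-type witness) by composing the original shift-agreement with the colour-permuting symmetry, and then invoke Observation \ref{O:subps} to read it off inside the associated $2$-graph. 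Once the two-coordinate reduction is in hand, both implications follow from the colour-independence of $R$ and the characterization of periodicity in terms of such pairs, with the YBE condition only needed to ensure $\bF_R^+$ is genuinely a $k$-graph so that the notion of periodicity applies; the remaining steps are routine bookkeeping with the factorization property.
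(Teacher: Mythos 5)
Your proposal follows essentially the same route as the paper: both directions rest on the colour-homogeneity of $\bF_R^+$ (all $N_i=N$, all $\theta_{ij}=R$) together with the restriction/extension of periods between the $k$-graph and its two-colour sub-$2$-graph (Observation \ref{O:subps}), and the reduction of an arbitrary period to one supported on two coordinates --- which you correctly single out as the main obstacle --- is resolved in the paper exactly by the colour-permuting symmetry you invoke: one swaps two coordinates $i_0,j_0$ of a period $\sum_i a_i\epsilon_i$ to obtain a second period and subtracts, yielding the two-coordinate period $(a_{j_0}-a_{i_0})(\epsilon_{i_0}-\epsilon_{j_0})$. The other direction (promoting a period $(a,-a)$ of the $2$-graph to the periods $a(\epsilon_i-\epsilon_j)$ of the $k$-graph, citing \cite{DY09}) is likewise the same in both treatments.
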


\begin{proof}
Suppose first that the associated $2$-graph of $R$ is periodic. Let $(a,-a)$ $(0\ne a\in \bN)$ be a period. Then $a(\epsilon_i-\epsilon_j)$ ($1\le i<j\le k$)
are periods of the associated $k$-graph of $R$. So its associated $k$-graph is periodic \cite{DY09}. 

Conversely, assume that the associated $k$-graph of $R$ is periodic ($k\ge 3$). Let us say $\sum_{i=1}^ka_i\epsilon_i\in \bZ^k$ be a 
period. So there are two distinct non-zero coefficients $a_{i_0}, a_{j_0}$. Since all $\theta_{ij}$'s are the same in \eqref{E:Fth}, 
$a_{j_0}\epsilon_{i_0}+a_{i_0}\epsilon_{j_0}+\sum_{i\ne i_0, j_0}^ka_i\epsilon_i$ should also be a period. 
Thus  
\[
a_{j_0}\epsilon_{i_0}+a_{i_0}\epsilon_{j_0}+\sum_{i\ne i_0, j_0}^ka_i\epsilon_i-\sum_{i=1}^ka_i\epsilon_i=(a_{j_0}-a_{i_0})(\epsilon_{i_0}-\epsilon_{j_0})
\]
is a period of the associated $k$-graph. This implies $(a_{i_0}, -a_{j_0})$ is a period of the associated $2$-graph.
Thus the associated $2$-graph is periodic. 
\end{proof}

By Proposition \ref{P:aper}, it makes sense to define the periodicity of $R$ as follows. 

\begin{defn}
A solution $R$ of the YBE is said to be \textit{periodic}, if its associated $k$-graph $\bF_R^+$ is periodic for some/all $k\ge 2$, and \textit{aperiodic} otherwise.
\end{defn}

\begin{cor}
\label{C:nondaPe}
A non-degenerate YBE solution is aperiodic.  
\end{cor}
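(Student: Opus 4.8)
The claim is that a non-degenerate YBE solution $R$ on $[N]$ (with $N>1$) is aperiodic, i.e., its associated $k$-graph $\bF_R^+$ is aperiodic for some/all $k\ge 2$. By Proposition \ref{P:aper} it suffices to work with a single convenient value of $k$; I would take $k=2$ and show the associated $2$-graph cannot be periodic. Recall that a $2$-graph is periodic precisely when there is a nonzero $(a,-a)\in\bZ^2$ (after relabelling the two colours) such that the infinite path space exhibits the corresponding shift symmetry; concretely, for single-vertex $2$-graphs this amounts to the existence of $a\ge 1$ and a bijection between degree-$(a,0)$ words and degree-$(0,a)$ words that is compatible with the commutation relations on all infinite tails. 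The strategy is to show that non-degeneracy forces a counting/injectivity obstruction.

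First I would unwind what a period $(a,-a)$ means for $\bF_R^+$ under assumption \eqref{E:ass}: using the higher-level bijections $R^{a,a}:[N^a]^2\to[N^a]^2$ introduced before Theorem \ref{T:cons}, periodicity with period $(a,-a)$ is equivalent to $R^{a,a}$ having a very rigid form — essentially that $R^{a,a}$ (or a suitable associated map on infinite words) acts as a ``constant'' in one variable, i.e. the first coordinate of $R^{a,a}(\bar x,\bar y)$ does not depend on $\bar y$ in the appropriate asymptotic sense (this is the single-vertex analogue of the periodicity criterion in \cite{DY091, DY09}). Second, I would invoke Theorem \ref{T:cons}: $R$ non-degenerate does not immediately give $R^{a,a}$ non-degenerate, but one checks directly from the formula $R^{a,a}(\bar x,\bar y)=(\bar y',\bar x')$ built from iterating $R$ that non-degeneracy of $R$ propagates — for fixed $\bar x$, the map $\bar y\mapsto \bar y'$ is a composition of the bijections $\alpha$ at each stage, hence bijective (and similarly $\bar x\mapsto \bar x'$ for fixed $\bar y$). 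Indeed this is exactly the statement that the non-degeneracy of $R$ implies the non-degeneracy of its higher levels, flagged in the introduction as a consequence of Section \ref{S:nondeg}. Third, I would combine these: if $\bF_R^+$ had a period $(a,-a)$, then $R^{a,a}$ would have to be degenerate in the way described above (the output in one coordinate being independent of one input), contradicting the non-degeneracy of $R^{a,a}$ unless $N^a=1$, i.e. $N=1$, which is excluded.

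The cleanest route, and the one I would actually write, is probably to phrase periodicity directly in terms of $\vartheta=\sigma R$ and the generalized QYBE, mirroring the proof of Proposition \ref{P:aper}: a period $\sum a_i\epsilon_i$ with a nonzero $(a,-a)$-part reduces (via that proposition's argument) to asking whether $R$ itself, or $R^{a,a}$, can satisfy the algebraic identity that encodes the path-space symmetry, and then reading off from that identity that $\alpha_x$ (equivalently $\beta_y$) would fail to be bijective. The main obstacle is pinning down precisely the single-vertex periodicity criterion in self-contained algebraic terms — the paper deliberately does not introduce periodicity of $k$-graphs in detail — so I would need to either cite the exact characterization from \cite{DY09} (periodicity $\Leftrightarrow$ a certain ``shift equivalence'' of the commutation data) and show non-degeneracy contradicts it, or reduce to the known fact that periodic single-vertex $k$-graphs have a specific structure incompatible with all $\alpha_x,\beta_y$ being permutations. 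Everything else — propagation of non-degeneracy to $R^{a,a}$, and the reduction to $k=2$ — is routine given Proposition \ref{P:aper} and Theorem \ref{T:cons}.
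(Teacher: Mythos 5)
Your proposal is correct in outline, but it takes a genuinely different (and longer) route than the paper for the key step. The paper's proof is essentially two lines: since $R$ is non-degenerate, all the maps $\alpha_\fs$ and $\beta_\fs$ are bijections on $[N]$, so \cite[Corollary 4.1]{DY091} applies directly to give aperiodicity of the associated $2$-graph, and Proposition \ref{P:aper} then transfers this to all $k\ge 2$. You instead reduce to $k=2$ the same way but then reconstruct the argument from the periodicity criterion of \cite{DY091} in the form ``periodic $\Leftrightarrow$ $R^{n,n}(u,v)=(\gamma(u),\gamma^{-1}(v))$ for some bijection $\gamma$ and some $n$,'' observe that such a form forces $v\mapsto(\text{first output})$ to be constant, hence degenerate, and derive a contradiction from the propagation of non-degeneracy to the higher levels $R^{n,n}$. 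This is exactly the mechanism the paper uses for the \emph{next} corollary (``$R$ is periodic iff $R^{n,n}=\id_{[N^n]^2}$ for some $n$''), so your route is legitimate and would in fact make Corollary \ref{C:nondaPe} an immediate consequence of that later characterization. Two caveats: first, the ``obstacle'' you flag (pinning down the periodicity criterion) is resolved only by citing \cite{DY091}, which puts you on the same footing as the paper's citation of \cite[Corollary 4.1]{DY091} --- you do not avoid the external input, you just use a different result from the same source; second, your claim that non-degeneracy of $R$ propagates to $R^{n,n}$ because the relevant maps are ``compositions of bijections'' is true but needs more care than stated (the map $\bar y\mapsto\bar y'$ for fixed $\bar x$ is bijective by a triangular/inductive argument, which is what the paper formalizes via the unique pullback and pushout properties in Lemma \ref{L:pullpush} and the Proposition of Section \ref{S:nondeg}); that material is logically independent of this corollary, so there is no circularity, but you should either invoke it explicitly or write out the induction. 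What the paper's approach buys is brevity; what yours buys is a self-contained algebraic mechanism that also yields the subsequent characterization of periodic solutions.
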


\begin{proof}
Let $R$ a be non-degenerate solution on $[N]$, and suppose that $R(\fs,\ft)=(\alpha_\fs(\ft),\beta_\ft(\fs))$. 
Then, for fixed $\fs\in [N]$, $\alpha_\fs$ and $\beta_\fs$ are bijections on $[N]$. 
Then by \cite[Corollary 4.1]{DY091},  the associated $2$-graph is aperiodic. So, for any $k\ge 2$, the associated $k$-graph is aperiodic by Proposition \ref{P:aper}.
Hence $R$ is aperiodic by definition. 
\end{proof}

But the converse of Corollary \ref{C:nondaPe} is not true. For example, let $R(i,j)=(i,j)$ for $1\le i, j\le 2$, 
and $R(m,3)=(3,m)$ for $1\le m\le 3$. It is the trivial extension of $\id$ on $\{1,2\}$ and $\id$ on $\{3\}$, and so it satisfies YBE. 
Clearly $R$ is degenerate, while the associated $2$-graph to $R$ is aperiodic by \cite[Corollary 4.1]{DY091}. 

Based on \cite{DY091, DY09}, the following characterization is not surprising. 

\begin{cor}
A YBE solution $R$ on $[N]$ is periodic, if and only if $R^{n,n}=\id_{[N^n]^2}$ for some $1\le n\in \bN$. 
\end{cor}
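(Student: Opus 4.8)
The plan is to translate periodicity of the associated $k$-graph into a statement about the higher-level maps $R^{n,n}$, using the equivalence between single-vertex $k$-graphs and (cancellative) unital semigroups recalled in Subsection~\ref{SS:single}, together with Observation~\ref{O:ext}. By Proposition~\ref{P:aper} we may work with the associated $2$-graph $\bF_R^+$ generated by two families $E_1,E_2$ of $N$ edges each, with $\theta$-commutation relation given by $R$. Periodicity of a single-vertex $2$-graph means (by \cite{DY091, DY09}) precisely that there exist $m,n\ge 1$, not both zero, such that left multiplication by words of degree $(m,0)$ agrees with left multiplication by words of degree $(0,n)$ up to a \emph{bijective} relabelling — equivalently, that the $2$-graph ``collapses'' so that some $E_1^m$ and $E_2^n$ become identified via the commutation map. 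The cleanest route is: periodicity with period $(n,-n)$ for the associated $2$-graph is equivalent to $R^{n,n}=\id_{[N^n]^2}$, and general periodicity reduces to this symmetric case because in $\bF_R^+$ all the maps $\theta_{ij}$ coincide with $R$.

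First I would pin down what period $(n,-n)$ means. For the single-vertex $2$-graph $\bF_R^+$, a degree-$(n,n)$ element can be factored as $e^1_u e^2_v$ (first colour then second) or as $e^2_{v'} e^1_{u'}$ (second then first), and the factorization property says $R^{n,n}(u,v)=(v',u')$ is a bijection $[N^n]\times[N^n]\to[N^n]\times[N^n]$. Following \cite{DY091, DY09}, the $2$-graph has period $(n,-n)$ exactly when the two ``coordinate'' maps obtained from $R^{n,n}$ — namely $u\mapsto v'$ for fixed $v$, and the dependence on $v$ — degenerate to the identity after relabelling; concretely, I expect period $(n,-n)$ $\iff$ $R^{n,n}$ is (conjugate to) the flip, and after the standard identification used throughout Section~\ref{S:relation} (Remark~\ref{R:2YB}(ii)), this is $R^{n,n}=\id_{[N^n]^2}$ in the paper's normalization where $\theta_{ij}$ plays the role of $R$ directly (recall $\vartheta_{ij}=\sigma\theta_{ij}$, so ``trivial'' $\vartheta$ corresponds to $\theta = $ flip, i.e. $R=\id$). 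I would cite the precise periodicity criterion from \cite{DY091} for single-vertex $2$-graphs and match it against $R^{n,n}$.

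Next, the reduction of a general period to a $(n,-n)$ period. If $\bF_R^+$ (as a $k$-graph, any $k\ge2$) is periodic, Proposition~\ref{P:aper} already gives periodicity of the associated $2$-graph, and the argument in its proof (exploiting that all $\theta_{ij}$ equal $R$, so coordinates may be permuted) shows that one may take the period of the $2$-graph of the form $(a,-a)$ with $a\ne 0$; replacing $a$ by $|a|$ we get a genuine period $(n,-n)$ with $n\ge1$. Conversely, if $R^{n,n}=\id_{[N^n]^2}$ then by Observation~\ref{O:ext} (with $l=m=n$) the single-vertex $2$-graph on $E_1^n, E_2^n$ with commutation map $R^{n,n}=\id$ is just $\bN^2$, which is visibly periodic with period $(n,-n)$; pulling this back, $(n,-n)$ is a period of $\bF_R^+$, so $R$ is periodic by definition. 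Combining the two directions with the equivalence of the previous paragraph closes the proof.

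The main obstacle I anticipate is the first paragraph's identification: making precise, and correctly oriented, the statement ``the associated $2$-graph has period $(n,-n)$ $\iff$ $R^{n,n}=\id$.'' This requires carefully invoking the definition of periodicity of a single-vertex $k$-graph from \cite{DY091, DY09} (which the paper deliberately does not reproduce) and checking the normalization conventions — in particular the $\sigma$-twist between $\theta_{ij}$ and $\vartheta_{ij}$ and the flip in $R(x,y)=(\alpha_x(y),\beta_y(x))$ — so that ``$R^{n,n}$ acts trivially on the infinite path space'' comes out as $R^{n,n}=\id_{[N^n]^2}$ rather than $R^{n,n}=\sigma$. Everything else (the reduction to symmetric periods, and the easy converse via Observation~\ref{O:ext}) is essentially bookkeeping already carried out in Proposition~\ref{P:aper}.
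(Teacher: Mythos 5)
Your overall route (reduce to the associated $2$-graph via Proposition~\ref{P:aper}, quote the periodicity criterion of \cite{DY091}, and match it against $R^{n,n}$; handle the converse by exhibiting $(n,-n)$ as a period when $R^{n,n}=\id$) is the same as the paper's, and the converse direction is fine. But there is a genuine gap exactly at the step you flag as the ``main obstacle,'' and it is not resolved by checking normalization conventions. The criterion from \cite{DY091} does \emph{not} say that periodicity forces $R^{n,n}$ to be the identity ``after relabelling'' in a way you can absorb: it says the $2$-graph is periodic if and only if for some $n$ there is a bijection $\gamma$ on $[N^n]$ with
\[
R^{n,n}(u,v)=(\gamma(u),\gamma^{-1}(v)),
\]
i.e.\ $R^{n,n}$ is of product (diagonal) form with a possibly nontrivial $\gamma$. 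The statement you must prove is $R^{n,n}=\id_{[N^n]^2}$ on the nose, so you still owe an argument that $\gamma=\id$, and no choice of conventions or relabelling supplies it.

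The paper closes this gap with two ingredients you never invoke: by Theorem~\ref{T:cons}, $R^{n,n}$ is itself a YBE solution on $[N^n]$, and by Example~\ref{Eg:2.7}(ii) the only YBE solution of the form $(u,v)\mapsto(f(u),g(v))$ is the identity (since the braid relation forces $f^2=f$, $g^2=g$ for the bijections $f,g$). Applied to $f=\gamma$, $g=\gamma^{-1}$ this gives $\gamma=\id$ and hence $R^{n,n}=\id_{[N^n]^2}$. Without this step your first paragraph asserts the desired equivalence rather than proving it; with it, the rest of your outline (the reduction of a general period to one of the form $(n,-n)$, and the easy converse) goes through.
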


\begin{proof}
From \cite{DY091}, the associated $2$-graph of $R$ is periodic if and only if there is a bijection 
$\gamma$ on $[N^n]$ for some $n\in \bN$ such that 
$R^{n,n}(u,v)=(\gamma(u),\gamma^{-1}(v))$. By Theorem \ref{T:cons}, $R^{n,n}$ is  also a YBE solution on $[N^n]$. 
It follows from Example \ref{Eg:2.7} (ii) that  $\gamma=\id_{[N^n]}$, and so $R^{n,n}=\id_{[N^n]^2}$. 
\end{proof}

\noindent
\textbf{Note:} If $R$ is a YBE solution, its $n$th iteration $R^n$ is not necessarily a YBE solution. Also, there are $R$ which are not  YBE solutions, but $R^n$ 
can be YBE solutions. 
Let us take the following as an example: Let $R(i,j)=(2j,i)$ for $i,j=1,2,3$, where the multiplication is modulo $3$. Then $R^2(i,j)=(2i,2j)$, and $(R^2)^2(i,j)=(i,j)$. It 
is easy to check that both $R$ and $(R^2)^2$ are YBE solutions on $[3]$, 
however $R^2$ is not a YBE solution by Example \ref{Eg:2.7} (ii).

\subsection{Connections with Some Known Results}

The following is \cite[Theorem 3.6]{GM}, which is now an immediate consequence of Theorem \ref{T:YBkGr} and Observation \ref{O:ext}. 

\begin{prop}[\cite{GM}]
\label{P:3.6}
A YBE solution can be uniquely extended a YBE solution on its YB-semigroup.
\end{prop}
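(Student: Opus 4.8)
The plan is to translate the statement "a YBE solution $R$ on $X$ extends uniquely to a YBE solution on its YB-semigroup $G_R^+$" into the language of the associated $k$-graph, and then invoke the structural results already established. First I would recall the setup: by Theorem \ref{T:YBkGr}, $R$ determines a single-vertex $k$-graph $\bF_R^+$ for each $k\ge 3$, and by Observation \ref{O:ext} (together with Observation \ref{O:subps}) the ``higher level'' maps $R^{l,m}$ assemble into a compatible family that again satisfies \eqref{E:kYB} whenever $R$ does. The key observation is that the YB-semigroup $G_R^+$ is, by definition, exactly the unital semigroup $\bF_R^+$ with all colours identified (i.e. it is the semigroup underlying the $k$-graph $\bF_R^+$ after collapsing the grading from $\bN^k$ to $\bN$, remembering only total word length). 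Concretely, an element of $G_R^+$ of length $n$ is a word $e_{w}$ with $w\in\bF_N^+$, $|w|=n$, modulo the $\theta$-CR relations; this is precisely the set $E^n$ in the $k$-graph picture.

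The main steps, in order, would be: (1) Show existence of the extension. Define $\widehat R$ on $G_R^+\times G_R^+$ by setting, for words $u,v\in\bF_N^+$ with $|u|=l$, $|v|=m$, $\widehat R(e_u, e_v) = (e_{v'}, e_{u'})$ where $R^{l,m}(u,v)=(v',u')$; this is forced by the requirement $e_u e_v = e_{v'} e_{u'}$ in the semigroup, so it is the only possible candidate, giving uniqueness essentially for free. (2) Check $\widehat R$ is well-defined on $G_R^+$, i.e. independent of the chosen representative word — this follows because the $\theta$-CR relations are exactly the relations $e_u e_v = e_{v'} e_{u'}$, so the multiplication-and-reordering operation descends to the quotient. (3) Check $\widehat R$ is a bijection: its inverse is built the same way from $R^{-1}$ (equivalently, from the reverse factorization in $\bF_R^+$), or one notes $R^{l,m}$ is a bijection $[N^l]\times[N^m]\to[N^m]\times[N^l]$ because $\bF_R^+$ is a $k$-graph and these are the $\alpha_{\epsilon_i,\epsilon_j}$-derived reshuffles. (4) Verify $\widehat R$ satisfies the YBE on $(G_R^+)^3$. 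This is where Observation \ref{O:ext} does the real work: for any triple $l,m,n\ge1$ the restriction of $\bF_R^+$ to $S=\langle l\epsilon_1, m\epsilon_2, n\epsilon_3\rangle$ is again a single-vertex $3$-graph, so $\{R^{l,m},R^{l,n},R^{m,n}\}$ satisfies \eqref{E:kYB}, which is exactly the QYBE/YBE for $\widehat R$ restricted to the subsets of words of these lengths; since every triple of elements of $G_R^+$ has some lengths $(l,m,n)$, the braided relation holds everywhere.

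For uniqueness I would argue: any YBE solution $\widetilde R$ on $G_R^+$ extending $R$ must respect the defining relations of the semigroup, so on length-$1$ elements it agrees with $R$, and since $\widetilde R$ must encode the commutation $e_u e_v = \widetilde R\text{-reordered}$ consistent with the semigroup identities, an easy induction on $|u|+|v|$ (pushing $v$ past $u$ one generator at a time and using that the reordering in $G_R^+$ is unique by the unique-factorization/cancellation property of the $k$-graph $\bF_R^+$) forces $\widetilde R = \widehat R$.

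The main obstacle I expect is step (2)/(4): making precise and airtight the claim that ``the higher-level reshuffles $R^{l,m}$ are well-defined and the $3$-graph condition on all length-triples is equivalent to the YBE for $\widehat R$.'' This is intuitively clear from the product-system picture and is essentially the content of Observations \ref{O:subps} and \ref{O:ext}, but one must be careful that the identification of $G_R^+$ with the ``colour-collapsed'' $k$-graph semigroup is faithful — i.e. that distinct reduced normal forms in $G_R^+$ are not accidentally identified — which again comes down to the cancellation/unique-factorization property of $\bF_R^+$ noted after \eqref{E:Fth}. Once that identification is in hand, the proof is a short invocation of Theorem \ref{T:YBkGr} and Observation \ref{O:ext} rather than a computation.
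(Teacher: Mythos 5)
Your proposal follows essentially the same route as the paper's own (sketched) proof: define the extension on length-$(l,m)$ pairs via the higher-level reshuffles $R^{l,m}$ coming from the associated $k$-graph, and then reduce the braided relation on $G_R^+$ to the statement that $\big\langle E_1^l,E_2^m,E_3^n;\ R^{l,m},R^{l,n},R^{m,n}\big\rangle$ is a single-vertex $3$-graph for all $l,m,n$, which is exactly Observation \ref{O:ext}. The extra care you take with well-definedness on the quotient and with uniqueness is consistent with (and slightly more explicit than) the paper's sketch, but it is not a different argument.
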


\begin{proof}[{\rm\textbf{Sketched Proof}}]
Let $R$ be a YBE solution on $X$. Define a map $\tilde R$ on $G_R^+\times G_R^+$ as follows:
\begin{align*}
\tilde R(x, e_\mt)&=(e_\mt, x),\quad \tilde R(e_\mt, x)=(x,e_\mt) \qforal x\in G_R^+,\\
\tilde R(e_u, e_v)&=(e_{v'},e_{u'}) \qforal u,v\in\bF_X^+,
\end{align*}
where $u',v'\in\bF_X^+$ are uniquely determined by 
\[
R_{12}^{|u|,|v|}(e_u^1, e_v^2)=(e_{v'}^2, e_{u'}^1).
\]

Using the notation in Subsection \ref{SS:newfamily}, for our purpose it suffices to  
show that, for integers $l,m,n\ge 1$, 
\[
\big\langle E_1^l,\bE_2^m,\bE_3^n; \theta_{12}:=R^{l,m}, \theta_{13}:=R^{l,n}, \theta_{23}:=R^{m,n}\big\rangle
\] 
is a $3$-graph. This follows from Observation \ref{O:ext}.  
\end{proof}

\begin{rem}
Two remarks are in order:

(i) Notice that $u',v'\in\bF_N^+$ above are also uniquely determined by 
\[
R_{ij}^{|u|,|v|}(e_u^i, e_v^j)=(e_{v'}^j, e_{u'}^i) \quad (1\le i<j\le 3)
\]
as $R_{12}=R_{13}=R_{23}$. 

(ii) Actually, as claimed in \cite[Theorem 3.9]{GM}, $(\tilde R, G_R^+)$ is a graded braided monoid (see \cite{GM} for definition).  The above proposition already shows that 
$\tilde R$ is a braided relation on $G_R^+$, and the first two identities in Proposition \ref{P:3.6} give the required properties {ML0} and {MR0} there, while 
\begin{itemize}
\item[ ]
{ML1} and {MR2}  correspond to the rule for $R^{|ab|,|u|}_{12}(e_a^1 e_b^1, e_u^2)$,
\item[ ]
{ML2} and {MR1} corresponds to the rule for  $R^{|a|,|uv|}_{12}(e_a^1, e_u^2 e_v^2)$.
\end{itemize}
\end{rem}


Let $R$ be a non-degenerate YBE solution on a set $X$. 
Before going further, let us first recall that another construction of its YB-group $G_R$ described in \cite[Theorem 4]{LYZ}. 

Let $X'$ be another copy of $X$ with $e_\fs'$ denoting the element corresponding to $e_\fs\in X$.
We use $\ol{X}= X\sqcup X'$ to stand for the disjoint union of $X$ and $X'$. 
Let
\[
U(X)=\bigsqcup_{i=0}^\infty \, \ol{X}^i
\]
be the unital free semigroup generated by $\ol{X}$. Let $\sim$ be the equivalence relation on $U(X)$ generated by 
\begin{align*}
& g e_\fs e_\fs' h\sim gh, \ g e_\fs' e_\fs h\sim gh, \\
& ge_\fs e_\ft h\sim g e_j e_i h \quad \text{whenever}\quad R(\fs,\ft)=(j,i),
\end{align*}
where $g,h\in U(X)$, $e_\fs, e_\ft,e_i,e_j\in X$, $e_\fs'\in X'$. 
Then 
\[
G_R=U(X)\slash\sim.
\] 

Repeatedly making use of the graphical interpretation of the braided relation, Lu-Yan-Zhu has shown the following theorem in \cite{LYZ}: 

\begin{prop}[\cite{LYZ}]
\label{P:thm4}
Keep the same notation as above, and let $\iota$ be the canonical map from $X$ to $G_R$. 
Then $R$ can be uniquely extended to a solution $\tilde R$ of YBE on $G_R$ such that  
$\iota \times \iota$ intertwines $R$ and $\tilde R$. 
\end{prop}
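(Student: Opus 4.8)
The plan is to extend $R$ to $G_R$ in two stages: first to the YB-semigroup $G_R^+$, then from $G_R^+$ to the group $G_R$. For the first stage I would invoke Proposition \ref{P:3.6}, which already furnishes a unique braided relation $\tilde R$ on $G_R^+$ compatible with $R$ via $\iota\times\iota$. The only new content here is that when $R$ is non-degenerate, $G_R^+$ embeds into $G_R$ (this is a standard fact for non-degenerate solutions; alternatively one checks it directly from the $U(X)/{\sim}$ presentation by comparing normal forms), so that $G_R^+$ can be regarded as a sub-semigroup of $G_R$.

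For the second stage I would argue that the braided map $\tilde R$ on $G_R^+\times G_R^+$ has components that are \emph{bijective} when $R$ is non-degenerate, so it extends to $G_R\times G_R$. Concretely, writing $\tilde R(g,h)=(\alpha_g(h),\beta_h(g))$, non-degeneracy of $R$ propagates to the higher levels $R^{l,m}$ by Theorem \ref{T:cons} together with the non-degeneracy result alluded to in Section \ref{S:nondeg} of the excerpt (non-degeneracy of $R$ implies non-degeneracy of all $R^{n,n}$, hence of the mixed levels $R^{l,m}$ as well, since these are the $\theta_{ij}$ of the sub-$3$-graphs of Observation \ref{O:ext}). Thus each $\alpha_g$ and each $\beta_h$ is a bijection of $X$-words up to the relation $\sim$, and the left action $g\mapsto\alpha_g$ of $G_R^+$ on $G_R$ and the right action $h\mapsto\beta_h$ both extend to actions of the group $G_R$ by taking inverses of the generators: $\alpha_{e_\fs'}:=\alpha_{e_\fs}^{-1}$, $\beta_{e_\fs'}:=\beta_{e_\fs}^{-1}$. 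One then checks that the three conditions of Corollary \ref{C:char} — $\alpha$ a left action of $G_R$, $\beta$ a right action of $G_R$, and the compatibility condition \eqref{E:comp} — all survive the passage to inverses; the first two are immediate from the group-action extension, and \eqref{E:comp} is an identity among the $\alpha,\beta$ that, once it holds on generators of $G_R^+$, holds on all of $G_R$ because both sides are expressed through the (two-sided) actions. This yields a YBE solution $\tilde R$ on $G_R$, and $\iota\times\iota$ intertwines $R$ and $\tilde R$ by construction.

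For uniqueness, suppose $\tilde R_1,\tilde R_2$ are two YBE solutions on $G_R$ with $(\iota\times\iota)R=\tilde R_i(\iota\times\iota)$. By Corollary \ref{C:char} each is determined by its $\alpha$- and $\beta$-actions; the intertwining condition pins these down on the image $\iota(X)$, which generates $G_R$ as a group, and since $\alpha,\beta$ must be (group) actions they are determined on all of $G_R$ — here one uses that a group action is determined by its values on a generating set, and that the compatibility \eqref{E:comp} forces the two actions to be mutually consistent. Hence $\tilde R_1=\tilde R_2$.

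\textbf{Main obstacle.} The delicate point is the embedding $G_R^+\hookrightarrow G_R$ for non-degenerate $R$, and the attendant claim that the higher-level maps $R^{l,m}$ remain bijective on equivalence classes in $U(X)/{\sim}$ rather than merely on $X$-words. One must be careful that the relation $\sim$ (which both cancels $e_\fs e_\fs'$ pairs and applies $R$) does not collapse distinct words in a way that destroys the well-definedness of $\alpha_g,\beta_h$ on $G_R$; establishing this cleanly is where the real work lies, and it is presumably here that the graphical/diagrammatic calculus of \cite{LYZ} is used to track words through cancellations. I would isolate this as a lemma (well-definedness of $\alpha,\beta$ on $G_R$) and then let Corollary \ref{C:char} do the rest mechanically.
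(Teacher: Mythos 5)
Your plan factors the extension through the YB-semigroup: first extend $R$ to $G_R^+$ via Proposition \ref{P:3.6}, then embed $G_R^+$ into $G_R$ and push the actions up to the group by inverting the generators. The paper explicitly warns against exactly this route: immediately after its proof of Proposition \ref{P:thm4} it notes that Proposition \ref{P:3.6} \emph{cannot} be applied to prove Proposition \ref{P:thm4}, since in general $G_R^+$ may fail to have the cancellation property, and it remarks elsewhere (after Theorem \ref{T:equ}) that it is \emph{generally unknown} whether $G_R^+$ is even left cancellative for a non-degenerate $R$. Your assertion that the embedding $G_R^+\hookrightarrow G_R$ ``is a standard fact for non-degenerate solutions'' is therefore the gap: it is standard only for \emph{symmetric} (involutive and non-degenerate) solutions, where $G_R^+$ is of I-type. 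Without that embedding, your stage one produces a braided relation on a semigroup that need not sit inside $G_R$, and your stage two (setting $\alpha_{e_\fs'}:=\alpha_{e_\fs}^{-1}$ on the quotient) has nothing well-defined to act on. You correctly isolate well-definedness on $U(X)/{\sim}$ as the delicate point, but the fix is not to prove the embedding; it is to bypass $G_R^+$ altogether.

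The paper's actual argument works with the free semigroup $U(X)$ on the \emph{doubled} alphabet $\ol X=X\sqcup X'$. Non-degeneracy (via Lemma \ref{L:nondeg}: either ``mixed'' pair of an $R$-square determines the whole square) is what allows one to define $\tilde R$ directly on $\ol X$ by the three formulas $\tilde R(e_{i}',e_{j}')=(e_\ft',e_\fs')$, $\tilde R(e_\fs', e_{j})=(e_\ft,e_i')$, $\tilde R(e_i, e_\ft')=(e_{j}',e_\fs)$ whenever $R(e_\fs,e_\ft)=(e_{j},e_i)$. One then proves the YBE for $\tilde R$ on the \emph{set} $\ol X$ (Step 1, which is \cite[Proposition 5]{LYZ}), extends $\tilde R$ to all of $U(X)$ by the product-system mechanism of Observation \ref{O:ext} (Step 2 --- this is the point the paper's $k$-graph viewpoint streamlines), and only at the end checks compatibility with the relation $\sim$ defining $G_R=U(X)/{\sim}$ (Step 3). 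Since $U(X)$ is free, no cancellativity of any structure semigroup is ever needed; the entire difficulty your proposal defers to a lemma is dissolved by never forming $G_R^+$ in the first place.
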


\begin{proof}[{\rm\textbf{Sketched Proof}}]
Define $\tilde R: \ol X\to \ol X$ as follows:
\begin{align*}
\tilde {R}(e_{i}',e_{j}')=(e_\ft',e_\fs'),\  \tilde {R}(e_\fs', e_{j})=(e_\ft,e_i'), \  \tilde {R}(e_i, e_\ft')=(e_{j}',e_\fs)
\end{align*}
whenever $R(e_\fs,e_\ft)=(e_{j},e_i)$.

The proof given in \cite{LYZ}  consists of three steps:

\textsf{Step 1:} It is proved that $\tilde R$ satisfies YBE on the set $\ol X$ (\cite[Proposition 5]{LYZ}). 

\textsf{Step 2:} It is shown that $\tilde R$ can be further extended to a YBE solution, still denoted as $\tilde R$, on $U(X)$. 

\textsf{Step 3:} $\tilde R$ on $U(X)$ is shown to be consistent with $\sim$. 

The point we wish to make here is that one can now easily apply Observation \ref{O:ext} to obtain Step 2. Indeed,  
Step 1 shows that $\tilde R$ determines a $3$-graph $\bF_{\tilde R}^+$. 
Then by Observation \ref{O:ext} one can further extend $\tilde R$ to a solution of YBE on $U(X)\setminus\{e_\mt\}$. 
Then define 
$\tilde R(e_\mt, x)=(x,e_\mt)$ and $\tilde R(x,e_\mt)=(e_\mt, x)$ for all $x\in U(X)$
to finish the proof. 
\end{proof}

The non-degenerateness required above is to guarantee that $\tilde R$ is well-defined (cf. \cite[Proposition 3]{LYZ}).  
Probably it should be mentioned that Proposition \ref{P:3.6} can not be applied to prove Proposition \ref{P:thm4},
since, as mentioned in \cite{GM}, generally it can be that $G_R^+$ has no cancellation property.

\subsection{Single-Vertex $k$-Graphs are YB-Semigroups}

\label{SS:kGrYB}

In this subsection, using a very different perspective from above, we prove that every single-vertex $k$-graph is the YB-semigroup of 
a square-free, involutive, (generally) degenerate YBE solution. This could be very useful in future research. 

We do \textit{not}  use our notation assumption \eqref{E:ass} in this subsection. 
Let $k\ge 3$, $1\le N_i\in \bN$ for $1\le i\le k$, and for $1\le i<j\le k$,
$
\theta_{ij}: [N_i]\times [N_j]\to [N_j]\times [N_i]
$
be a bijection. 
As in Section \ref{S:ps}, we write $\theta=\{\theta_{ij}: 1\le i<j\le k\}$, and  $\vartheta=\{\sigma_{j,i}\circ\theta_{ij}: 1\le i<j\le k\}$.

Let $X:=\bigsqcup_{i=1}^k [N_i]$. We now define a bijection $R:X^2\to X^2$ as follows:
\begin{itemize}
\item[(pii)] 
$R|_{[N_i]^2}=\id_{[N_i]^2} \quad (1\le i\le k)$,
\item[(pij)]
$R|_{[N_i]\times [N_j]}= \theta_{ij} \quad (1\le i<j\le k)$,
\item[(pji)]
$R|_{[N_j]\times [N_i]}=\theta_{ij}^{-1}\quad(1\le i<j\le k)$.
\end{itemize}

\begin{thm}
\label{T:equ}
Keep the above notation. Then $\theta$ determines a (single-vertex) $k$-graph $\Fth$, if and only if $R$ is 
a solution of the YBE on $\bigsqcup_{i=1}^k [N_i]$. 

Furthermore, the following properties hold:
\begin{itemize}
\item
 $\Fth\cong G_R^{+}$,
 \item
 $R$ is involutive and square-free,
 \item 
 $R$ is degenerate 
unless $N_i=1$ for all $1\le i\le k$.
\end{itemize}
\end{thm}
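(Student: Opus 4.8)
## Proof Proposal

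The plan is to reduce everything to the triple-overlap condition \eqref{E:kYB} and then verify the three "Furthermore" properties by direct bookkeeping. First I would establish the main equivalence. Recall from Section \ref{S:ps} that $\theta=\{\theta_{ij}\}$ determines a single-vertex $k$-graph $\Fth$ if and only if the family $\vartheta=\{\vartheta_{ij}=\sigma_{j,i}\circ\theta_{ij}\}$ satisfies \eqref{E:kYB}, and by Observation \ref{O:3-k} it suffices to check this on every triple $\{[N_i],[N_j],[N_\fk]\}$ with $i<j<\fk$. On the other side, $R$ is a YBE solution on $X=\bigsqcup_i[N_i]$ if and only if $R_{12}R_{23}R_{12}=R_{23}R_{12}R_{23}$ on $X^3$; since $R$ preserves the "colour type" of each coordinate, both sides of this identity respect the decomposition of $X^3$ into the $k^3$ blocks indexed by colour triples, so it suffices to check the braid relation block by block. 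I would then observe that the definition of $R$ via (pii), (pij), (pji) is designed precisely so that on the block $[N_i]\times[N_j]\times[N_\fk]$ the braid identity for $R$ unwinds, after composing with the appropriate flips, to exactly Eq.~\eqref{E:kYB} for that triple; the blocks with a repeated colour contribute only identities and flips and are automatically consistent (this is the same computation that underlies the trivial extension $2^\circ$(i) and $\id_{X^2}\times_\theta\id_{Y^2}$ in $2^\circ$(ii), iterated over all pairs). This gives the "if and only if".

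For $\Fth\cong G_R^+$: by construction the defining relations of $G_R^+={}_{\mathrm{sgp}}\langle X; xy=y'x' \text{ when } R(x,y)=(y',x')\rangle$ are exactly, in block $(i,i)$, the trivial relations $e^i_\fs e^i_\ft = e^i_\fs e^i_\ft$ (no relation), in block $(i,j)$ with $i<j$ the relations $e^i_\fs e^j_\ft = e^j_{\ft'}e^i_{\fs'}$ coming from $\theta_{ij}(\fs,\ft)=(\ft',\fs')$, i.e. the $(\theta\text{-CR})$, and in block $(j,i)$ the relations $e^j_\ft e^i_\fs = \ldots$ coming from $\theta_{ij}^{-1}$, which are the same relations read backwards. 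Hence $G_R^+$ has exactly the presentation \eqref{E:Fth} of $\Fth$, so the identity map on generators induces the isomorphism.

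For involutivity and square-freeness: square-freeness is immediate since $R|_{[N_i]^2}=\id$ gives $R(e^i_\fs,e^i_\fs)=(e^i_\fs,e^i_\fs)$, and elements from distinct colours never coincide. For $R^2=\id_{X^2}$ I would again go block by block: on $(i,i)$ it is $\id$; on $(i,j)$ with $i<j$, $R$ maps $[N_i]\times[N_j]\to[N_j]\times[N_i]$ by $\theta_{ij}$ and then $R$ on $[N_j]\times[N_i]$ applies $\theta_{ij}^{-1}$, so $R^2=\id$; the block $(j,i)$ is symmetric. For degeneracy, by Lemma \ref{L:basic}(iii) it suffices to exhibit one $x\in X$ for which $\alpha_x$ (or $\beta_x$) is not a bijection of $X$; taking $x=e^i_\fs$, the map $y\mapsto\alpha_x(y)$ sends every element of $[N_i]$ to itself and sends $[N_j]$ into $[N_i]$'s complement in the appropriate way, so its image misses $e^i_\fs$ as soon as there is a second colour, i.e. unless $k$ effectively collapses — more carefully, $\alpha_{e^i_\fs}$ maps $[N_j]$ ($j\ne i$) into $[N_j]$ and maps $[N_i]$ identically, hence cannot be surjective onto $X$ unless $X=[N_i]$, which forces all $N_\ell$ with $\ell\ne i$ to be empty; since every $N_\ell\ge 1$, this happens only if $k=1$, contradicting $k\ge3$. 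The genuinely needed statement "$R$ is degenerate unless all $N_i=1$" then follows by checking the remaining case: if some $N_i\ge 2$, pick two distinct elements of $[N_i]$ and note $\beta$ restricted there is the identity, so $\beta_{e^j_\ft}$ for $j\ne i$ cannot hit both... — here I would instead argue cleanly that non-degeneracy of $R$ would by Corollary \ref{C:nondaPe} force aperiodicity, while the $k$-graph $\Fth$ with all $\theta_{ij}$ acting trivially within colours is periodic when some direction is "trivial", giving the contradiction; alternatively just count: $\alpha_{e^i_\fs}$ fixes all of $[N_i]$ pointwise and maps the rest of $X$ into $\bigsqcup_{j\ne i}[N_j]$, so it is a bijection of $X$ only if $|[N_i]|=1$ for...

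The main obstacle I expect is pinning down exactly the degeneracy claim with the correct quantifiers: showing $R$ is degenerate \emph{whenever} some $N_i\ge 2$, not merely when $k$ is large. The clean route is: suppose $R$ is non-degenerate; fix $i$ with $N_i\ge 2$ and fix $j\ne i$ (exists since $k\ge 3$); then $\alpha_{e^j_\ft}$ must be a bijection of $X$, but $R(e^j_\ft, e^j_{\ft''})=(e^j_{\ft},e^j_{\ft''})$ shows $\alpha_{e^j_\ft}$ fixes $[N_j]$ pointwise, while $R(e^j_\ft, e^i_\fs)=\theta_{ij}^{-1}(\ft,\fs)\in[N_i]\times[N_j]$ shows $\alpha_{e^j_\ft}$ maps $[N_i]$ into $[N_i]$; so $\alpha_{e^j_\ft}$ is a bijection on $[N_i]$; symmetrically every $\beta$ is too, and combined with square-freeness and the cocycle relations \eqref{E:hom}--\eqref{E:comp} one derives $\theta_{ij}=\sigma$ (the flip) for all $i<j$, i.e. $R$ is the trivial solution — but a direct check (or the periodicity criterion, last Corollary of Section \ref{SS:newfamily}) shows that forces all $N_i=1$ for the solution to be non-degenerate, a contradiction. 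I would present this last implication carefully since it is the one genuinely new piece of reasoning; everything else is routine block-decomposition.
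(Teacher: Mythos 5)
Your treatment of the main equivalence and of $\Fth\cong G_R^+$ is essentially the paper's own argument: decompose $X^3$ into colour blocks, observe that the blocks with a repeated colour are automatically consistent (the paper invokes \cite[Theorem 4.9]{GM} here, but the direct computation you allude to does work), and identify the braid relation on the three-distinct-colour blocks with Eq.~\eqref{E:kYB}; the presentation of $G_R^+$ collapses to ($\theta$-CR) exactly as you say. Involutivity and square-freeness are also fine.

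The degeneracy bullet, however, contains a genuine error. You repeatedly assert that $\alpha_{e^i_\fs}$ \emph{fixes $[N_i]$ pointwise}. It does not: since $R|_{[N_i]^2}=\id_{[N_i]^2}$ means $R(x,y)=(x,y)$, comparing with $R(x,y)=(\alpha_x(y),\beta_y(x))$ gives $\alpha_x(y)=x$ for all $y\in[N_i]$, i.e.\ $\alpha_{e^i_\fs}$ is \emph{constant} on $[N_i]$ (you have confused the identity solution, for which $\alpha_x\equiv x$, with the flip, for which $\alpha_x(y)=y$). The correct reading makes the claim a one-liner: if some $N_i\ge 2$, then $\alpha_{e^i_\fs}$ sends two distinct elements of $[N_i]$ to the same point, so it is not injective and $R$ is degenerate by Lemma \ref{L:basic}(iii); if all $N_i=1$ one checks directly that every $\alpha_x$ and $\beta_x$ is $\id_X$. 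Your misreading sends you down a much longer road that never closes: the ``clean route'' ends with ``one derives $\theta_{ij}=\sigma$ for all $i<j$, i.e.\ $R$ is the trivial solution, \dots which forces all $N_i=1$ for the solution to be non-degenerate.'' That last implication is false --- the flip is non-degenerate on \emph{any} set --- and the intermediate derivation of $\theta_{ij}=\sigma$ is neither carried out nor needed. The appeal to Corollary \ref{C:nondaPe} and periodicity is likewise a detour that you do not complete. So the degeneracy assertion, which the paper dismisses as immediate from the definition of $R$, is the one place where your proposal as written does not establish the claim.
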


\begin{proof} 
For the equivalence, suppose first that $\theta$ determines a $k$-graph. 
Note that property (pii) of $R$ says that $R|_{[N_i]^2}$ is a YBE solution on $[N_i]$.
To verify that $R$ satisfies Eq.~\eqref{E:YBE}, it now suffices to check \eqref{E:YBE} holds true in the following two cases:
 
\textsf{Case (a):} on $[N_i]\times [N_i]\times [N_j]$, 
$[N_j]\times [N_i]\times [N_i]$,  $[N_i]\times [N_j]\times [N_i]$ for $1\le i\ne j\le k$;
 
\textsf{Case (b):}  on $[N_i]\times [N_j]\times [N_\fk]$ for $1\le i\ne j\ne \fk\le k$. 

Case (a) is proved by \cite[Theorem 4.9]{GM} (also cf. Subsection \ref{SS:oldnew} Construction $2^\circ$ (ii)). 
Since $\theta$ determines a $k$-graph,  $\vartheta$ satisfies Eq.~\eqref{E:kYB}.  
Combing this fact with properties (pij), (pji) proves Case (b).

Conversely, if $R$ is a YBE solution on $\bigsqcup_{i=1}^k[N_i]$, then combining \eqref{E:YBE} and properties (pij), (pji) of $R$ yields that
$\vartheta$ satisfies Eq.~\eqref{E:kYB}. Thus $\theta$ determines a $k$-graph. 

For the second part of the theorem, it suffices to show $\Fth\cong G_R^+$, since the other two properties can be easily seen from the definition of $R$. 
To this end, let us write the copy of $[N_i]$ in $G_R^+$ as $\{x_\fs^i: \fs\in [N_i]\}$ ($1\le i\le k$). Thus
\begin{align*}
G_R^+
&=\big\langle {x_\fs^i},  1\le i\le k, \fs\in[N_i]; x^i_\fs x^j_\ft=x^j_{\ft'} x^i_{\fs'} \text{ if } R(x_\fs^i,x_\ft^j)=(x_{\ft'}^j, x_{\fs'}^i),1\le i,j\le k\big\rangle\\
&\stackrel{(\text{pii})}=\big\langle x_\fs^i,  1\le i\le k, \fs\in[N_i]; x^i_\fs x^j_\ft=x^j_{\ft'} x^i_{\fs'} \text{ if } R(x_\fs^i,x_\ft^j)=(x_{\ft'}^j, x_{\fs'}^i), 1\le i<j\le k\big\rangle\\
&=\big\langle x_\fs^i, 1\le i\le k, \fs\in[N_i]; x^i_\fs x^j_\ft=x^j_{\ft'} x^i_{\fs'} \text{ if } \theta_{ij}(\fs,\ft)=(\ft',\fs')\big\rangle
(\text{by }(\text{pij})\, \&\, (\text{pji})) \\
&\cong\big\langle e_\fs^i, 1\le i\le k, \fs\in[N_i]; e^i_\fs e^j_\ft=e^j_{\ft'} e^i_{\fs'} \text{ if } \theta_{ij}(\fs,\ft)=(\ft',\fs')\big\rangle\\
&=\Fth.
\end{align*}
This ends the proof. 
\end{proof}

Some remarks are in order.

\begin{rem}
Thus far, there are two approaches to obtain the relations between $k$-graphs and the YBE -- Theorem \ref{T:YBkGr} and Theorem \ref{T:equ}. 
As mentioned before, the natures of these two approaches are rather different, since they yield two seemingly ``contradictory" inclusions: 
very roughly
\[
\text{Theorem \ref{T:YBkGr}}\Rightarrow \{\text{YBE solutions}\} \subsetneq \{\text{single-vertex }k\text{-graphs with }k\ge 3\},
\]
while 
\[
\text{Theorem \ref{T:equ}}\Rightarrow\{\text{single-vertex }k\text{-graphs with }k\ge 3\}\subsetneq \{\text{YBE solutions}\}.
\]
\end{rem}

\begin{rem}
Keep in mind that it is generally unknown if $G_R^+$ is (even left) cancellative  for a given non-degenerate YBE solution $R$ (cf. \cite{GM}). 
So Theorem \ref{T:equ} is also interesting in the sense that $\Fth$ is always cancellative being a $k$-graph. 
\end{rem}


\begin{rem}
The solution $R$ on $\bigsqcup_{i=1}^k[N_i]$ given in Theorem \ref{T:equ} can be though of being obtained from the iterations of regular extensions
in \cite{GM}. To make precise, for $1\le n\le k$, let $R_n=R|_{(\bigsqcup_{i=1}^n[N_i])^2}$. Then $R_n$ is a solution of the YBE on $\bigsqcup_{i=1}^n[N_i]$
(and so $R_k=R$). Then $R_n$ is a regular extension of solutions $R_{n-1}$ and $\id_{[N_n]^2}$ for all $2\le n\le k$. 
\end{rem}

\begin{rem}
The above proposition is for $k$-graphs with $k\ge 3$. Note that the case of $k=2$ is nothing but Construction $2^\circ$ (ii) in Subsection \ref{SS:oldnew}.   
\end{rem}

Two examples related to Theorem \ref{T:equ} are given below. 

\begin{eg}
Let $\theta_{13}(\fs,\ft)=\theta_{23}(\fs,\ft)=((\fs+\ft) \text{\ mod 2}, \ft)$ for all $\fs,\ft\in[2]$, and $\theta_{12}=\id$ on $[2]$. 
It is easy to check that $\theta=\{\theta_{12}, \theta_{13}, \theta_{23}\}$ determines a $3$-graph (also cf. \cite[Example 2.3]{DY09} but with
notation difference caution). 
 Then the YBE solution $R$ on $\bigsqcup_{i=1}^3[2]$ determined by $\theta$ in Theorem \ref{T:equ} is given by 
\begin{align*}
R(x_\fs^i,x_\ft^i)&=(x_\fs^i,x_\ft^i), \  R(x_\fs^1,x_\ft^2)=(x_\fs^2,x_\ft^1),\\
R(x_\fs^1,x_\ft^3)&=(x_{\fs+\ft}^3,x_\ft^1),\
R(x_\fs^2,x_\ft^3)=(x_{\fs+\ft}^3,x_\ft^2)  
\end{align*}
for all $1\le i\le 3$ and $\fs,\ft\in[2]$. 
\end{eg}

\begin{eg}
Let $\theta_{ij}=\id_{[2]^2}$ for all $1\le i<j\le 3$.
 Using the notation in the above example, then the YBE solution $R$ on $\bigsqcup_{i=1}^3[2]$ determined by $\theta$ in Theorem \ref{T:equ} is given by 
\begin{align*}
R(x_\fs^i,x_\ft^i)&=(x_\fs^i,x_\ft^i)\quad (1\le i\le 3),\\
  R(x_\fs^i,x_\ft^j)&=(x_\fs^j,x_\ft^i)\quad (1\le i<j\le 3).
\end{align*}
Notice that $R$ is not the identity solution any more. 
\end{eg}

\section{Product conjugacy and YB-homomorphisms}
\label{S:pc}

In this very short section, we briefly discuss product conjugacy and YB-homomorphisms of solutions of the YBE. 
The former gives an invariant of their associated $k$-graphs, while the latter yields a length-preserving invariant of their 
YB-semigroups. 

\begin{defn}
Let $\theta$ and $R$ be bijections on $[N]\times [N]$.
Say that $\theta$ and $R$ are \textit{product conjugate} if there are $\tau, \rho\in \sym_{[N]}$ 
such that $R\circ(\tau\times \rho)=(\tau\times \rho)\circ \theta$. 
\end{defn}

Suppose that $\theta$ and $R$ are product conjugate. Then it is easy to check that 
their associated single-vertex $2$-graphs $\Fth$ and $\bF_R^+$ are isomorphic \textit{semigroups}.
But in general, this is not an invariant (one could switch the two colours)\cite{Pow07}. However, if one requires that the isomorphism is degree-preserving, 
then \cite[Proposition 2.11]{FS01} gives the converse. 

Before giving the statement, note that ,for $k\ge 3$, a $k$-graphs $\Fth$ is a symmetric tensor groupoids in the sense of \cite{FS01}: Indeed, 
$\sigma_{X, Y}: X\otimes Y\to Y\otimes X, (x,y)\mapsto (y,x)$ is the required flip functor. Note that $xy$ and $yx$ always make sense
because of $\Fth$ being single-vertex.

\begin{prop}[\cite{FS01}]
\label{P:2.11}
Two solutions $\theta, R:  [N]^2\to [N]^2$ of the YBE are product conjugate, if and only if,
for any $k\ge 2$, 
their associated $k$-graphs $(\bF_{\theta}^+,d)$ and $(\bF_R^+,d)$ are (graph) isomorphic. 
\end{prop}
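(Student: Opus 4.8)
The plan is to prove both directions via the product-system / $k$-graph dictionary from Section \ref{S:ps}, reducing everything to the $k=2$ case and then propagating up. First I would dispose of the easy direction: if the associated $k$-graphs are graph isomorphic for all $k\ge 2$, then in particular the associated single-vertex $2$-graphs $(\bF_\theta^+,d)$ and $(\bF_R^+,d)$ are isomorphic as $k$-graphs, i.e.\ via a degree-preserving isomorphism. A degree-preserving isomorphism restricts to bijections $\tau:\Lambda_\theta^{\epsilon_1}\to\Lambda_R^{\epsilon_1}$ and $\rho:\Lambda_\theta^{\epsilon_2}\to\Lambda_R^{\epsilon_2}$ on the two colours of edges; identifying each edge set with $[N]$, these are elements $\tau,\rho\in\sym_{[N]}$, and the fact that the isomorphism respects the factorization property (equivalently, intertwines the graph isomorphisms $\alpha_{\epsilon_i,\epsilon_j}$) translates exactly into $R\circ(\tau\times\rho)=(\tau\times\rho)\circ\theta$. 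This is essentially a restatement of \cite[Proposition 2.11]{FS01}, so I would cite it and keep the argument short.

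For the converse, suppose $\theta$ and $R$ are product conjugate via $\tau,\rho\in\sym_{[N]}$, so $R(\tau\times\rho)=(\tau\times\rho)\theta$. Fix $k\ge 2$. Using the presentation \eqref{E:Fth} and the assumption \eqref{E:ass} (all $N_i=N$, all $\theta_{ij}$ equal), I would build a degree-preserving map $\Phi\colon \bF_\theta^+\to\bF_R^+$ on generators. The subtlety is that under \eqref{E:ass} the \emph{same} permutation pair $(\tau,\rho)$ governs all pairs of colours, but a colour $i$ plays the role of ``first slot'' relative to colours $j>i$ and the role of ``second slot'' relative to colours $j<i$. So I would set, for the $i$-th colour, $\Phi(e^i_\fs)=e^i_{\pi_i(\fs)}$ where the permutations $\pi_i\in\sym_{[N]}$ are chosen to be consistent across all $\binom{k}{2}$ commutation relations simultaneously. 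Concretely, I expect one can take $\pi_i$ to be an alternating composite of $\tau$ and $\rho$ (or their inverses) depending on $i$; the check is that for each $i<j$ the induced map on $[N]_i\times[N]_j$ conjugates $\theta_{ij}=\theta$ to $\theta_{ij}=R$, i.e.\ that the chosen $\pi_i,\pi_j$ satisfy $R\circ(\pi_i\times\pi_j)=(\pi_i\times\pi_j)\circ\theta$. Once $\Phi$ is well-defined on generators and respects every ($\theta$-CR) relation, it extends to a semigroup homomorphism; it is degree-preserving by construction and invertible (inverse built from $\pi_i^{-1}$), hence a $k$-graph isomorphism.

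The main obstacle is precisely the consistency problem in the previous paragraph: finding a single family $\{\pi_i\}_{1\le i\le k}$ of permutations that simultaneously conjugates the relation between colours $i$ and $j$ for \emph{every} pair $i<j$, given only the one product-conjugacy relation $R(\tau\times\rho)=(\tau\times\rho)\theta$. Here one must use that $\theta=R$ as abstract bijections on $[N]^2$ whenever they come from YBE solutions under \eqref{E:ass} — more carefully, one uses $R(\tau\times\rho)=(\tau\times\rho)\theta$ twice (once in each slot order) to see that $\tau$ and $\rho$ can be transported through the tower of colours. I would first verify it for $k=3$ by hand (three relations, three unknowns $\pi_1,\pi_2,\pi_3$), which is the substantive content, and then handle general $k\ge 3$ by the standard reduction: by Observation \ref{O:3-k} a $k$-graph is determined by its restrictions to triples of colours, and an isomorphism of $k$-graphs is likewise determined by its restrictions to such triples, so the $k=3$ construction propagates. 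For $k=2$ the statement is just the easy direction's converse, already covered. Finally I would remark that the ``symmetric tensor groupoid'' structure noted before the proposition is what guarantees the flip functor $\sigma_{X,Y}$ is available, so that the ($\theta$-CR) words $xy$ and $yx$ used throughout make sense; this is why the single-vertex hypothesis is essential.
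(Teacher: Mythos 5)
The paper does not actually prove this proposition --- it is quoted from \cite[Proposition 2.11]{FS01}, and the surrounding text only supplies the remarks that product conjugacy easily gives a semigroup isomorphism and that degree-preservation is what makes the converse work. So your attempt has to stand on its own. Your ``easy'' direction is fine: a degree-preserving isomorphism of the associated $2$-graphs is exactly a pair $\tau,\rho\in\sym_{[N]}$ intertwining the factorizations, which is product conjugacy, and this is genuinely the content of \cite[Proposition 2.11]{FS01}.

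The problem is the converse for $k\ge 3$, and the ``consistency problem'' you flag is not a technicality you can defer --- it is fatal to your construction. A degree-preserving isomorphism of the $k$-graphs forces \emph{every} pair $(\pi_i,\pi_j)$, $i<j$, to conjugate $\theta$ to $R$, and a single conjugating pair $(\tau,\rho)$ need not extend to such a family. Your proposed alternating composite already fails formally (with $\pi_1=\tau$, $\pi_2=\rho$, $\pi_3=\tau$ the pair $(\pi_1,\pi_3)=(\tau,\tau)$ would have to conjugate $\theta$ to $R$, which is not implied), and no choice can work in general: take the permutation solutions $\theta(\fs,\ft)=\big((1\,2)\ft,(1\,2)\fs\big)$ and $R(\fs,\ft)=\big((3\,4)\ft,(1\,2)\fs\big)$ on $[4]$ (both satisfy the YBE by Example \ref{Eg:2.7}(i) since the two permutations commute). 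These are product conjugate via $\tau=\id$, $\rho=(1\,3)(2\,4)$. But in any degree-preserving isomorphism of the associated $3$-graphs, the relation between colours $1$ and $2$ forces $\pi_2^{-1}(3\,4)\pi_2=(1\,2)$, i.e.\ $\pi_2(\{1,2\})=\{3,4\}$, while the relation between colours $2$ and $3$ forces $\pi_2$ to centralize $(1\,2)$, i.e.\ $\pi_2(\{1,2\})=\{1,2\}$ --- a contradiction. So the $3$-graphs are not isomorphic even though the solutions are product conjugate, and the implication you are trying to prove for $k\ge3$ cannot be established by any refinement of your argument; the result one can actually prove along these lines is the $k=2$ equivalence (or the version where the hypothesis is strengthened to a compatible family of conjugating permutations, e.g.\ YB-isomorphism, where all $\pi_i$ coincide). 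Your appeal to Observation \ref{O:3-k} to propagate from $k=3$ to general $k$ is harmless but does not touch this existence problem.
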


\begin{defn}
Let $R_X$ and $R_Y$ be solutions of the YBE on $X$ and $Y$, respectively. We say that $R_X$ and $R_Y$ are \textit{YB-homomorphic}, 
if there is a map $\phi: X\to Y$ such that 
\[
(\phi\times \phi)\circ R_X=R_Y\circ(\phi\times\phi).
\]  
If the homomorphism $\phi$ is bijective, then $R_X$ and $R_Y$ are called \textit{YB-isomorphic}, and $\phi$ is a called 
a \textit{YB-isomorphism} from $R_X$ to $R_Y$. 
\end{defn}

Clearly, $R_X$ and $R_Y$ being YB-isomorphic is stronger than $R_X$ and $R_Y$ being product conjugate. 

Let $R$ be a YBE solution on $X$. From the definition of its YB-semigroup, it is easy to see that 
$G_R^+=\bigsqcup_{n=0}^\infty X_n$, where $X_n$ is the set of all elements in $G_R^+$ of length $n$. 
But applying Proposition \ref{P:2.11} and the relations between $\bF_R^+$ and $G_R^+$, one has 

\begin{cor}
Two YBE solution are YB-isomorphic, if and only if their YB-semigroups are length-preserving isomorphic. 
\end{cor}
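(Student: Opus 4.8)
The plan is to unwind both implications from the definitions, using the structural decomposition $G_R^+=\bigsqcup_{n=0}^\infty X_n$ already noted just above the statement, together with Proposition \ref{P:2.11}. First I would fix notation: let $(X,R_X)$ and $(Y,R_Y)$ be the two YBE solutions, and recall that $X\subseteq G_{R_X}^+$ and $Y\subseteq G_{R_Y}^+$ are exactly the length-one elements, so $X=X_1$ and $Y=Y_1$ in the grading. The key observation is that both the YB-semigroup $G_R^+$ and the associated $2$-graph $\bF_R^+$ are presented on the generating set $X$ by the same defining relations $xy=y'x'$ whenever $R(x,y)=(y',x')$; the only difference is that $\bF_R^+$ additionally carries the degree map $d$ (here $d$ takes values in $\bN^2$ and records the two colour-lengths). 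Since in the single-colour case (one family of generators, one $\theta=R$) a degree-preserving isomorphism of $2$-graphs is the same datum as a length-preserving semigroup isomorphism of the $G_R^+$'s, the corollary should follow by translating Proposition \ref{P:2.11} through this dictionary.

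For the forward direction I would argue: suppose $\phi:X\to Y$ is a YB-isomorphism, i.e.\ $\phi$ is a bijection with $(\phi\times\phi)\circ R_X=R_Y\circ(\phi\times\phi)$. Then $\phi$ carries the defining relations of $G_{R_X}^+$ to those of $G_{R_Y}^+$ bijectively, hence extends to a semigroup isomorphism $\Phi:G_{R_X}^+\to G_{R_Y}^+$; because $\phi$ sends generators to generators it sends a word of length $n$ to a word of length $n$, so $\Phi(X_n)=Y_n$ for all $n$, i.e.\ $\Phi$ is length-preserving. (Alternatively, and more in the spirit of the surrounding text, a YB-isomorphism is in particular a product conjugacy with $\tau=\rho=\phi$, so Proposition \ref{P:2.11} already gives a degree-preserving graph isomorphism of the associated $2$-graphs, which restricted to each $d$-fibre count is length-preserving on $G_R^+$.) For the converse, suppose $\Psi:G_{R_X}^+\to G_{R_Y}^+$ is a length-preserving semigroup isomorphism. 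Then $\Psi$ restricts to a bijection $\phi:=\Psi|_{X_1}:X\to Y$ between the length-one elements. A length-$2$ relation $xy=y'x'$ in $G_{R_X}^+$, which encodes $R_X(x,y)=(y',x')$, is mapped by $\Psi$ to $\phi(x)\phi(y)=\phi(y')\phi(x')$ in $G_{R_Y}^+$, and since $G_{R_Y}^+$'s length-$2$ relations are exactly those coming from $R_Y$, this forces $R_Y(\phi(x),\phi(y))=(\phi(y'),\phi(x'))$, i.e.\ $(\phi\times\phi)\circ R_X=R_Y\circ(\phi\times\phi)$. Hence $\phi$ is a YB-isomorphism.

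The one point needing a little care — and the main obstacle — is the claim that the length-$2$ part of $G_R^+$ "sees exactly" the bijection $R$: one must check that two distinct pairs $(x,y)\ne(a,b)$ with $R(x,y)=(y',x')$ and $R(a,b)=(b',a')$ cannot collapse to the same element of $G_R^+$, i.e.\ that the natural map $X^2\to G_R^+$ (word of length two) is injective onto the length-$2$ fibre, equivalently that $R$ has no "hidden" coincidences forced by longer relations. This is really the content of the equivalence in Proposition \ref{P:2.11} restricted to $k=2$ (and ultimately goes back to the factorization property / \cite[Proposition 2.11]{FS01}): the associated $2$-graph $\bF_R^+$ has $\bF_R^+{}^{(1,1)}$ in degree-preserving bijection with $X\times X$, and $\bF_R^+{}^{(1,1)}$ is canonically the degree-$(1,1)$ slice of $G_R^+$. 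I would cite Proposition \ref{P:2.11} and the identification $\bF_R^+ \leftrightarrow G_R^+$ (generators and relations coincide, $d$ just records word-length in each colour) to dispatch this, rather than re-prove injectivity from scratch. Once that identification is in hand, "YB-isomorphic $\iff$ degree-preserving $2$-graph isomorphic $\iff$ length-preserving semigroup isomorphic" is immediate, and the corollary is proved.
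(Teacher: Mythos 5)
Your overall route is the same as the paper's: decompose $G_R^+$ by length, get the forward direction by extending a YB-isomorphism to the semigroups, and get the converse by restricting a length-preserving isomorphism to the degree-one generators. The forward direction is fine. The problem is the converse, at the step ``since $G_{R_Y}^+$'s length-$2$ relations are exactly those coming from $R_Y$, this forces $R_Y(\phi(x),\phi(y))=(\phi(y'),\phi(x'))$.'' The degree-$2$ component of $G_R^+$ is $X^2$ modulo the equivalence relation \emph{generated} by $(x,y)\sim R(x,y)$, i.e.\ the orbit partition of the cyclic group $\langle R\rangle$ acting on $X^2$. So the equality $\phi(x)\phi(y)=\phi(y')\phi(x')$ in $G_{R_Y}^+$ only tells you that $(\phi(y'),\phi(x'))$ lies in the same $\langle R_Y\rangle$-orbit as $(\phi(x),\phi(y))$; it does not single out the image under $R_Y$ itself unless every orbit has length at most $2$ (as happens for involutive solutions). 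Relatedly, the ``point needing care'' you isolate is misdiagnosed: the map $X^2\to G_R^+$ is never injective on the length-$2$ fibre (by construction $xy=y'x'$ there); the real issue is the opposite one, namely that its fibres are whole $\langle R\rangle$-orbits and an orbit does not remember the dynamics on it.

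This is not a removable technicality. The semigroups $G_R^+$ and $G_{R^{-1}}^+$ always have literally the same presentation (the unordered relation $\{xy,uv\}$ for $R(x,y)=(u,v)$ is exactly the relation imposed by $R^{-1}$ at $(u,v)$), so the identity map is a length-preserving isomorphism between them; yet $R$ and $R^{-1}$ need not be YB-isomorphic. For the permutation solution $R(i,j)=(j+1,i)$ on $[3]$, with $R^{-1}(i,j)=(j,i-1)$, a YB-isomorphism $\phi$ would have to satisfy $\phi(j+1)=\phi(j)$ for all $j$, which is impossible. To be fair, the paper's own justification (``the restriction $\pi|_X$ gives a YB-isomorphism'') asserts exactly the same unproved step, so your proposal faithfully reproduces the intended argument; but as written neither version closes the converse. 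You would need either to restrict to a class of solutions (e.g.\ involutive ones) where the orbit partition of $X^2$ determines $R$, or to extract more from the isomorphism than its effect on the degree-$2$ component.
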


In fact, if $\pi: G_{R_X}^+\to G_{R_Y}^+$ is an isomorphism preserving length, then its restriction $\pi|_X$ onto the generator set $X$
gives a YB-isomorphism between $R_X$ and $R_Y$. 


\begin{rem}
The above results can \textit{not} be generalized to YB-groups, since the canonical mapping from $X$ to $G_X$ may not be an embedding. 
If $R_1$ and $R_2$ are YB-isomorphic solutions, then one can easily check that $G_{R_1}$ and $G_{R_2}$ are isomorphic. But the converse is not true. 
Let us look at the following example. 

Let $R_1(i,j)=(j,i)$ for $1\le i,j\le 2$. Then $R_1$ is a YBE solution on $[2]$.
Let $R_2(i,j)=((j+1)\, {\rm mod} 2,i)$ for $1\le i,j\le 2$ and $R_2(i,3)=(3,i)$ for $1\le i\le 3$. Then $R_2$ is a YBE solution on $[3]$ being the trivial extension.
Clearly $G_{R_1}\cong G_{R_2}\cong \bZ^2$. 
However, it is trivial to see that $R_1$ and $R_2$ are not isomorphic. 
\end{rem}

\section{Non-degeneracy $=$ unique pullback and pushout properties}
\label{S:nondeg}

As seen from above, a solution of the YBE and its associated $k$-graph heavily interact with each other. We in this section show that 
the non-degeneracy of a YBE solution is characterized by the unique pullback and pushout properties of its associated $k$-graphs. 
As a result, we show that a YBE solution is non-degenerate if and only if so are all of its higher levels. 

\begin{lem}
\label{L:nondeg}
Let $R(x,y)=(\alpha_x(y),\beta_y(x))$ be a bijection on $X^2$. Then $R$ is non-degenerate, if and only if 
the following properties hold: 

\begin{itemize}
\item[(i)] for any $(x,y')$ there is a unique $(y,x')$ such that $R(x,y)=(y',x')$; 

\item[(ii)] for any $(y,x')$, there is a unique $(x,y')$ such that $R(x,y)=(y',x')$. 
\end{itemize}
\end{lem}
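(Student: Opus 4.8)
The plan is to unwind the definitions and match the two halves of the non-degeneracy condition with the two halves of the claimed statement. Recall that $R(x,y)=(\alpha_x(y),\beta_y(x))$, and that writing $R(x,y)=(u,v)$ we have $u=\alpha_x(y)$ and $v=\beta_y(x)$. The definition of non-degeneracy says: for each fixed $y$ the map $x\mapsto v=\beta_y(x)$ is a bijection of $X$, and for each fixed $x$ the map $y\mapsto u=\alpha_x(y)$ is a bijection of $X$. By Lemma~\ref{L:basic}(iii) this is the same as saying each $\alpha_x$ and each $\beta_y$ is a bijection on $X$. So the proof reduces to showing that condition (i) of the lemma is equivalent to ``$\alpha_x$ is a bijection for every $x$'' and condition (ii) is equivalent to ``$\beta_y$ is a bijection for every $y$''.

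For the first equivalence: fix $x$. Condition (i) says that for every $y'\in X$ there is a unique pair $(y,x')$ with $R(x,y)=(y',x')$. Since $R(x,y)=(\alpha_x(y),\beta_y(x))$, the first coordinate being $y'$ forces $\alpha_x(y)=y'$, and then $x'=\beta_y(x)$ is determined by $y$. Hence existence and uniqueness of $(y,x')$ for every $y'$ is exactly existence and uniqueness of $y$ with $\alpha_x(y)=y'$, i.e.\ surjectivity and injectivity of $\alpha_x$. Ranging over all $x$ gives the equivalence with non-degeneracy in the $\alpha$-variable. The second equivalence is the symmetric statement: fixing $y$, condition (ii) says for every $x'$ there is a unique $(x,y')$ with $R(x,y)=(y',x')$; the second coordinate being $x'$ forces $\beta_y(x)=x'$, and then $y'=\alpha_x(y)$ is determined by $x$, so this is exactly bijectivity of $\beta_y$. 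Combining the two halves and invoking Lemma~\ref{L:basic}(iii) finishes the proof.

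I would also double-check the subtle point that in condition (i) the freedom is genuinely in the pair $(y,x')$ rather than $y$ alone: once $y$ is chosen, $x'=\beta_y(x)$ is not free, so the ``unique $(y,x')$'' phrasing is just a convenient way of saying ``unique $y$ completing the factorization'', and similarly in (ii). There is no real obstacle here; the only thing requiring a moment's care is making sure the quantifier structure (``for any $(x,y')$ there is a unique $(y,x')$'') is correctly read as ``for all $x$ and all $y'$'', which matches the ``for any fixed $x$'' in the definition of non-degeneracy. The argument is a short unwinding, so I would present it in a few lines, treating the $\beta$ case as ``symmetrically'' analogous to the $\alpha$ case.

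One remark worth noting in the write-up: properties (i) and (ii) are exactly the ``unique pushout'' and ``unique pullback'' conditions for the associated $2$-graph alluded to in the section title, since in the graph picture $R(x,y)=(y',x')$ records a commuting square of edges, so that (i) says every ``outgoing corner'' $(x,y')$ extends uniquely to a square and (ii) says every ``incoming corner'' $(y,x')$ does. This observation motivates the lemma and will be used in the theorem that follows, but the proof of the lemma itself needs only the elementary equivalence described above.
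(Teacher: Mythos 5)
Your proof is correct and follows essentially the same route as the paper: both arguments fix one coordinate and identify condition (i) with bijectivity of $y\mapsto\alpha_x(y)$ and condition (ii) with bijectivity of $x\mapsto\beta_y(x)$, the remaining entry of the pair being determined once the first is. Your explicit remark that $x'=\beta_y(x)$ carries no extra freedom is a slightly cleaner way of handling the point the paper attributes to ``the bijectivity of $R$.''
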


\begin{proof}
(i) The ``Only if" part is \cite[Lemma 4]{LYZ}. Actually, the uniqueness of $y$ follows from the bijectivity of $y\mapsto y'$ for fixed $x$. Then the uniqueness of $x'$ 
comes from the bijectivity of $R$. 

For the ``If" part, let us fix $x\in X$. The surjectivity and injectivity of the map $y\mapsto y'$ is from the give existence and uniqueness, respectively.

(ii) This is proved similarly.
\end{proof}

Pictorially,  one can draw $R:X\times X\to X\times X$ as a crossing, which is called an \textit{$R$-square}:
\[
\input xy
\xyoption{all} \xymatrix@!@C=.1in@R=.1in{
&x \ar@{-}[dr]&&y\ar@{->}[ddll]\\
&& \ar@{->}[dr]&\\
&y'&& x'
}
\]
It follows from Lemma \ref{L:nondeg} that any row/column of the $R$-square completely determines it if $R$ is bijective/non-degenerate.

Motivated by the notions of the little pullback and pushout properties introduced in \cite{Exe}, let us give the following definition.

\begin{defn} 
\label{D:pu}
Let $\Fth=\big\langle e_\fs^i; \text{($\theta$-CR)}\big\rangle$ be an arbitrary single-vertex $k$-graph. We say that 
\begin{itemize}
\item[(a)]
$\Fth$ has the \textit{unique pullback property} if for any $(e^i_\fs, e^j_\ft)$ $(i\ne j$) there is a unique pair
$(e^j_{\ft'}, e^i_{\fs'})$ such that $e_\fs^i e_{\ft'}^j=e_{\ft}^j e_{\fs'}^i$; and 
\item[(b)]
$\Fth$ has the \textit{unique pushout property} if for any $(e^i_{\fs'}, e^j_{\ft'})$ $(i\ne j$) there is a unique pair
$(e^j_{\ft}, e^i_{\fs})$ such that $e_{\fs}^i e_{\ft'}^j=e_{\ft}^j e_{\fs'}^i$. 
\end{itemize}
\end{defn}

Graphically, the unique pullback (resp. pushout) property of $\Fth$ says that the lower (resp. upper) part determines the upper (resp. lower) part in the following commuting
diamond:
\[
\begin{xy}
{\ar (0, 0); (10,-10)};
?*!/_3mm/{e_{\fs'}^i};
{\ar@{-->} (10, -10); (0,-20)};
?*!/_3mm/{e_{\ft}^j};
{\ar@{-->}(0, 0); (-10,-10)};
?*!/^3mm/{e^j_{\ft'}};
{\ar (-10,-10); (0,-20)};
?*!/^3mm/{e_{\fs}^i};
\end{xy}
\]
\\
Therefore, combining the factorization property, one has that any two (not necessarily directed) adjacent edges completely determine the whole commuting diamond.  

Note that the difference between the little pullback property defined in \cite{Exe} and the unique pullback property
in Definition \ref{D:pu} lies in that the former requires at most one such pair $(e^j_{\ft'}, e^i_{\fs'})$, but not require the existence. 
Likewise for the unique pushout property. 

The following lemma implies that the unique pullback and pushout properties can be generalized to some `higher levels' of $k$-graphs. 

In the sequel, for $\bm,\bn\in \bN^k$, as usual, the symbol $\bm\wedge \bn$ denotes the coordinate-wise minimum of $\bm$ and $\bn$. 

\begin{lem}
\label{L:pullpush}
Let $\Fth$ be an arbitrary single-vertex $k$-graph, and 
$\mu,\nu\in\Fth$ be such that $d(\mu)\wedge d(\nu)=\mathbf{0}$.
\begin{enumerate}
\item Suppose that $\Fth$ has the unique pullback property. Then 
there are unique $\tilde\mu,\tilde\nu\in\Fth$ with $d(\mu)=d(\tilde\mu)$ and $d(\nu)=d(\tilde\nu)$ such that $\mu\tilde\nu=\nu\tilde\mu$.

\item Suppose that $\Fth$ has the unique pushout property. Then 
there are unique $\tilde\mu,\tilde\nu\in\Fth$ with $d(\mu)=d(\tilde\mu)$ and $d(\nu)=d(\tilde\nu)$ such that $\tilde\mu\nu=\tilde\nu\mu$.   
\end{enumerate}
\end{lem}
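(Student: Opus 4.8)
The plan is to prove both statements by induction on the number of ``colours'' appearing in $\mu$ (equivalently, on the support of $d(\mu)$), reducing everything to repeated application of the unique pullback (resp. pushout) property together with the factorization property. I will only write out part (1); part (2) is completely analogous with arrows reversed. First I would use the factorization property of $\Fth$ to write $\mu = \mu_1\cdots\mu_p$ as a product of edges (elements of degree $\epsilon_{i}$ for various $i$) and similarly $\nu = \nu_1\cdots\nu_q$ as a product of edges; since $d(\mu)\wedge d(\nu)=\mathbf 0$, no colour of $\mu$ equals a colour of $\nu$, so every pair $(\mu_a,\nu_b)$ is a genuinely ``mixed'' pair of distinct colours to which the unique pullback property applies.

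The key step is a ``row of squares'' argument. Fix $\nu_1$ (a single edge) and slide it leftward past $\mu=\mu_1\cdots\mu_p$ one edge at a time: by the unique pullback property applied to $(\mu_p,\nu_1)$ there is a unique pair $(\tilde\nu_1^{(p)},\mu_p')$ with $\mu_p\tilde\nu_1^{(p)} = \nu_1\mu_p'$ (degrees matching); repeating with $(\mu_{p-1},\tilde\nu_1^{(p)})$, etc., one obtains unique $\mu_1',\dots,\mu_p'$ and a unique edge $\nu_1'$ with $\mu\,\nu_1' = \nu_1\,\mu_1'\cdots\mu_p'$, where $d(\mu_a')=d(\mu_a)$ and $d(\nu_1')=d(\nu_1)$. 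Now iterate this over $\nu_2,\dots,\nu_q$ in turn, each time pushing the next edge of $\nu$ through the current (uniquely determined) word of the $\mu$-block; after $q$ steps one reaches $\mu\,\td\nu = \nu\,\td\mu$ with $\td\mu=\mu_1^{(q)}\cdots\mu_p^{(q)}$, $\td\nu = \nu_1'\cdots\nu_q'$, $d(\td\mu)=d(\mu)$, $d(\td\nu)=d(\nu)$, and every intermediate edge uniquely determined by the unique pullback property; hence $\td\mu,\td\nu$ exist. For uniqueness of $\td\mu,\td\nu$ as elements of $\Fth$ (not just of the chosen factorizations), I would invoke the factorization property: any $\td\mu,\td\nu$ with the stated degrees and $\mu\td\nu=\nu\td\mu$ factor uniquely into edges, and running the above sliding argument backwards shows each edge is forced; alternatively, left cancellation in $\Fth$ reduces uniqueness of the pair to the uniqueness already built into each square.

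The main obstacle I anticipate is bookkeeping rather than conceptual: making sure the inductive sliding is independent of the order in which edges are processed (so that ``$\td\mu$, $\td\nu$'' are genuinely well-defined and not artifacts of a chosen factorization), and checking that the condition $d(\mu)\wedge d(\nu)=\mathbf 0$ is exactly what guarantees every square encountered is a mixed-colour square (so the unique pullback property — which is only postulated for $i\neq j$ — always applies, and one never needs a square of the form $e_\fs^i e_\ft^i$). A clean way to sidestep the order-dependence worry is to phrase the whole thing as: the unique pullback property says precisely that for single edges of distinct colours the ``lower pair determines the upper pair,'' and then note that by the factorization property the generalized diamond with sides $\mu,\nu$ decomposes uniquely into a grid of $p\times q$ elementary squares, each of which is determined by the pullback property once the bottom-left corner data $\mu,\nu$ is fixed; existence and uniqueness of the top-right sides $\td\mu,\td\nu$ then follow from existence and uniqueness square-by-square. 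I would present it in this grid language to keep the argument short.
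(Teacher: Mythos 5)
Your proposal is correct and is essentially the paper's own argument: the paper proves the lemma by induction on $|d(\mu)|+|d(\nu)|$, splitting $\nu$ into two sub-paths and applying the inductive hypothesis twice, which when unrolled is exactly your $p\times q$ grid of elementary mixed-colour squares, with uniqueness obtained (as you propose) by factoring any competing pair and forcing each square via the unique pullback property. The only point to watch is the order in which you fill the grid — each elementary square is determined by its \emph{lower} pair of edges, so the sliding must proceed in the direction consistent with that (your ``left cancellation'' alternative for uniqueness does not work as stated, since the two competing products need not be equal in $\Fth$, but your primary square-by-square argument is the right one and matches the paper's).
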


\begin{proof} 
For $\bn=(n_1,\ldots, n_k)\in\bN^k$, let us write $|\bn|=n_1+\cdots +n_k$. 

(i) It is proved by the higher dimensional version of mathematical induction.  
If $d(\mu)=\ep_i$ and $d(\nu)=\ep_j$ for some $i\ne j$, then (i) holds true by the definition of the unique pullback property. 
Assume that (i) holds true for $\mu$, $\nu\in\Fth$ such that 
$|d(\mu)+d(\nu)|\le N$ and $d(\mu)\wedge d(\nu)=\mathbf{0}$.  
We wish to show that (i) is true if $\mu,\nu\in\Fth$ are such that $|d(\mu)+d(\nu)|=N+1$ and $d(\mu)\wedge d(\nu)=\mathbf{0}$. 

Let us write $\nu=\nu_2\nu_1$ satisfying $d(\nu_1), d(\nu_2)>\mathbf{0}$. One then has $d(\mu)\wedge d(\nu_i)=\mathbf{0}$ for $i=1,2$. 
By our inductive assumption, there are unique elements $\mu'$ and $\nu_2'$ such that 
 $d(\mu')=d(\mu)$,  $d(\nu_2')=d(\nu_2)$ and $\mu \nu_2'=\nu_2 \mu'$. Applying our inductive assumption again, 
 one has unique elements $\mu''$ and $\nu_1'$ such that 
 $d(\mu'')=d(\mu)$,  $d(\nu_1')=d(\nu_1)$ and $\nu_1\mu'' =\mu' \nu_1'$. 
 Hence the pair $(\tilde\mu,\tilde\nu):=(\mu'', \nu_2'\nu_1')$ satisfies
 $\mu\tilde\nu=\nu\tilde\mu$ with $d(\tilde\mu)=d(\mu)$ and $d(\tilde\nu)=d(\nu)$. 
 This finishes the proof of existence.
The above proof can be easily summarized in the following picture:  
\[
\begin{xy}
{\ar@{-->} (-20, 0); (-10,-10)};
?*!/^3mm/{\nu_1};
{\ar@{-->} (0, 10); (10,0)};
?*!/_3mm/{\nu_1'};
{\ar@{-->} (10, 0); (20,-10)};
?*!/_3mm/{\nu_2'};
{\ar (0, 10); (-20,0)};
?*!/^3mm/{\mu''};
{\ar (20, -10); (0,-20)};
?*!/_3mm/{\mu};
{\ar(10, 0); (-10,-10)};
?*!/^3mm/{\mu'};
{\ar@{-->} (-10,-10); (0,-20)};
?*!/^3mm/{\nu_2};
\end{xy}
\]
  
It remains to show uniqueness. To this end, let us assume that $(\tilde \mu'', \tilde\nu')$ also satisfies 
the required properties. Write $\tilde\nu'= \tilde\nu_2'\tilde\nu_1'$ such that  $d(\nu_2')=d(\tilde\nu_2')$ and 
$d(\nu_1')=d(\tilde\nu_1')$. So we have $\nu_2\nu_1\tilde\mu''=\mu\tilde\nu_2'\tilde\nu_1'$. This implies 
$\nu_2\tilde\mu'\tilde\nu_1=\mu\tilde\nu_2'\tilde\nu_1'$ using $\nu_1\tilde\mu''=\tilde\mu'\tilde\nu_1$. 
By the unique pullback property, $(\mu,\nu_2)$ gives $(\tilde\mu',\tilde\nu_2')=(\mu',\nu_2')$. 
Then applying the unique pullback property to $(\mu',\nu_1)$ gives
$(\nu_1', \mu'')=(\nu_1',\tilde\mu'')$. 
 Thus $(\tilde\mu'', \tilde\nu')=(\tilde\mu'', \tilde\nu_2'\tilde\nu_1')=(\mu'',\nu_2'\nu_1')=(\mu'',\nu')$, as desired.
 
 (ii) It is proved similarly. 
\end{proof}

The following result follows immediately from Definition \ref{D:pu} and Lemma \ref{L:nondeg}.  

\begin{cor}
\label{C:nondeg}
Let  $R: X^2\to X^2$ be a bijection. 
\begin{enumerate}
\item
$R$ is non-degenerate, if and only if the associated $2$-graph $\bF_R^+$ has the unique pullback and pushout properties. 

\item
Suppose that $R$ is a YBE solution. Then it is non-degenerate, if and only if its associated $k$-graph $\bF_R^+$ has the unique pullback and pushout properties
for any $k\ge 2$. 
\end{enumerate}
\end{cor}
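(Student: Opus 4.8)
The plan is to deduce Corollary \ref{C:nondeg} directly from the lemmas immediately preceding it, so that essentially no new computation is needed. For part (1), I would simply match up the two characterizations: Lemma \ref{L:nondeg} says that a bijection $R(x,y)=(\alpha_x(y),\beta_y(x))$ is non-degenerate if and only if (i) for each $(x,y')$ there is a unique $(y,x')$ with $R(x,y)=(y',x')$, and (ii) for each $(y,x')$ there is a unique $(x,y')$ with $R(x,y)=(y',x')$. Now recall that the associated $2$-graph $\bF_R^+$ has relations $e^1_\fs e^2_{\ft'}=e^2_\ft e^1_{\fs'}$ exactly when $R(\fs,\ft)=(\ft',\fs')$ (Remark \ref{R:2YB} (iii), and the convention in Definition \ref{D:pu}). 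So the unique pullback property — for each $(e^1_\fs,e^2_\ft)$ a unique $(e^2_{\ft'},e^1_{\fs'})$ with $e^1_\fs e^2_{\ft'}=e^2_\ft e^1_{\fs'}$ — is literally condition (i) rewritten, reading off $(y',x')=R(x,y)$ from the given $(x,y)$; and the unique pushout property is literally condition (ii), recovering $(x,y)$ from the given $(y',x')=(\ft',\fs')$-data. Hence $R$ non-degenerate $\iff$ $\bF_R^+$ has both properties. One small bookkeeping point I would be careful about: the roles of ``row'' and ``column'' in the $R$-square picture, i.e.\ which of the two graph colours plays the part of $x$ versus $y$; I would spell this correspondence out once so the equivalence with Lemma \ref{L:nondeg}(i),(ii) is unambiguous.

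For part (2), the ``only if'' direction is the substantive one and is exactly where Lemma \ref{L:pullpush} enters. Assume $R$ is a non-degenerate YBE solution. By part (1), the associated $2$-graph already has the unique pullback and pushout properties; but I need these properties for the associated $k$-graph $\bF_R^+$ with $k\ge 3$. Here I invoke Theorem \ref{T:YBkGr}: since $R$ solves the YBE, $\bF_R^+$ is a genuine single-vertex $k$-graph for every $k\ge 3$, with all $\theta_{ij}$ equal to (the flip composed with) $R$. The unique pullback property of $\bF_R^+$ as a $k$-graph is a statement about pairs $(e^i_\fs,e^j_\ft)$ with $i\ne j$; because all $\theta_{ij}$ coincide, this reduces for each such pair to the $2$-colour relation governed by $R$, which holds by part (1). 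Alternatively — and this is the cleaner packaging — Lemma \ref{L:pullpush} shows that the unique pullback/pushout properties, once known for single edges of distinct colours, automatically propagate to all pairs $\mu,\nu$ with $d(\mu)\wedge d(\nu)=\mathbf 0$; applying it with the base case supplied by part (1) gives the $k$-graph statement for all $k\ge 2$ at once. For the ``if'' direction, if $\bF_R^+$ has the unique pullback and pushout properties for some $k\ge 2$ (in particular for $k=2$ after restricting to two colours via Observation \ref{O:subps}), then part (1) immediately yields that $R$ is non-degenerate.

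I then record the promised consequence about higher levels as a short corollary-style remark: taking $\bE_1=E_1^n$, $\bE_2=E_2^n$ (and a third colour, using Theorem \ref{T:YBkGr}(iii) for $k=3$), Observation \ref{O:ext} identifies $R^{n,n}$ with a $2$-colour slice of the $k$-graph $\bF_R^+$; since non-degeneracy of $R$ passes to all of $\bF_R^+$ having the unique pullback and pushout properties (part (2)), the restriction to the $n\ep_1,n\ep_2$ directions again has those properties, so by part (1) applied to $R^{n,n}$ we conclude $R^{n,n}$ is non-degenerate. And conversely $R=R^{1,1}$, so non-degeneracy of all higher levels trivially implies it for $R$.

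The main obstacle is not conceptual depth but a potential mismatch of conventions: I must make sure that the ``for any $(e^i_\fs,e^j_\ft)$ with $i\ne j$'' quantifier in the $k$-graph definition of the unique pullback property really does collapse to the single equivalence of Lemma \ref{L:nondeg} under the standing assumption \eqref{E:ass} that all $\theta_{ij}$ equal $R$ — in particular that ordered pairs $(i,j)$ with $i>j$, governed by $\theta_{ij}^{-1}$, are handled by the \emph{pushout} half rather than giving a genuinely new condition. Checking that the ``$i>j$'' case contributes nothing new beyond pullback$+$pushout for a single $R$ is the one place where I would slow down and verify carefully rather than wave hands; everything else is a direct citation of Lemma \ref{L:nondeg}, Lemma \ref{L:pullpush}, Theorem \ref{T:YBkGr}, and Observation \ref{O:ext}.
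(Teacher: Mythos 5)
Your proposal is correct and takes essentially the same approach as the paper, which offers no argument beyond the remark that the corollary ``follows immediately from Definition \ref{D:pu} and Lemma \ref{L:nondeg}'' --- i.e.\ exactly your part (1) dictionary between the $R$-square and the commuting diamond, with part (2) reducing to part (1) because under \eqref{E:ass} every pair of colours in $\bF_R^+$ is governed by the same $R$. The only caveat is that your ``cleaner packaging'' of part (2) via Lemma \ref{L:pullpush} does not actually apply there (that lemma propagates the properties to higher-degree paths \emph{within} a fixed $k$-graph already assumed to have them, not from $k=2$ to $k\ge 3$), but your primary argument never needs it; Lemma \ref{L:pullpush} is instead what drives the subsequent proposition on the levels $R^{n,n}$, as in your closing remark.
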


\begin{prop}
Suppose that $R:X^2\to X^2$ is a bijection. 
\begin{enumerate}
\item
$R$ is non-degenerate, if and only if so is its $n$th level $R^{n,n}$ for every $n\ge 1$. 

\item 
$R$ is a non-degenerate YBE solution on $X$, if and only if $R^{n,n}$ is a non-degenerate YBE solution on $X^n$ for every $n\ge 1$.
\end{enumerate}
\end{prop}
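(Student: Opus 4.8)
The plan is to deduce everything from the characterization of non-degeneracy in terms of the unique pullback and pushout properties (Corollary \ref{C:nondeg}) together with the higher-level extension of those properties (Lemma \ref{L:pullpush}), rather than manipulating the maps $\alpha_x,\beta_y$ directly.

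First I would prove part (1). Recall from Remark \ref{R:2YB} (iii) that \emph{any} bijection $R$ on $X^2$ determines a single-vertex $2$-graph $\bF_R^+$, and from Corollary \ref{C:nondeg} (1) that $R$ is non-degenerate iff $\bF_R^+$ has the unique pullback and pushout properties. Now $R^{n,n}$ is precisely the $2$-graph structure obtained by restricting $\bF_R^+$ (viewed via the product-system picture) to the sub-semigroup $S=\langle n\epsilon_1, n\epsilon_2\rangle$, i.e. $E_1^n\times E_2^n\to E_2^n\times E_1^n$. Thus $R^{n,n}$ is non-degenerate iff the $2$-graph $\bF_{R^{n,n}}^+$ has the unique pullback and pushout properties. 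For the forward direction: if $R$ is non-degenerate, then $\bF_R^+$ has the unique pullback property, and applying Lemma \ref{L:pullpush} (1) with $\mu=e^1_u\in E_1^n$, $\nu=e^2_v\in E_2^n$ (which satisfy $d(\mu)\wedge d(\nu)=\mathbf 0$) gives the unique pullback property for $R^{n,n}$; symmetrically Lemma \ref{L:pullpush} (2) gives the unique pushout property. Hence $R^{n,n}$ is non-degenerate. For the converse, take $n=1$, since $R^{1,1}=R$; alternatively note that non-degeneracy of the row/column determination at level $n$ restricts down to level $1$ because the level-$1$ $R$-squares sit inside the level-$n$ ones, but invoking $R^{1,1}=R$ is cleanest.

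Next, part (2). One direction is immediate: if $R$ is a non-degenerate YBE solution, then by Theorem \ref{T:cons} $R^{n,n}$ is a YBE solution on $X^n=[N^n]$ (identifying $X$ with $[N]$ as in Remark \ref{R:2YB} (ii)), and by part (1) it is non-degenerate. For the converse, if $R^{n,n}$ is a non-degenerate YBE solution for every $n\ge1$, then in particular $R=R^{1,1}$ is a non-degenerate YBE solution.

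The only real subtlety, and the step I would be most careful about, is making rigorous the identification ``$R^{n,n}$ is the $2$-graph $\bF_R^+$ restricted to $S=\langle n\epsilon_1,n\epsilon_2\rangle$'' and checking that the hypothesis $d(\mu)\wedge d(\nu)=\mathbf 0$ in Lemma \ref{L:pullpush} is genuinely met (it is, since $\mu\in E_1^n$ has degree $n\epsilon_1$ and $\nu\in E_2^n$ has degree $n\epsilon_2$, so their coordinate-wise minimum is $\mathbf 0$) — but this is exactly the content of Observation \ref{O:subps} and the discussion preceding Observation \ref{O:ext}, so the argument is short once that bookkeeping is set up. No estimate or hard combinatorics is needed; everything is a packaging of results already established in the excerpt.
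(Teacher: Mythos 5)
Your proposal is correct and follows essentially the same route as the paper: the paper likewise proves part (1) by combining Corollary \ref{C:nondeg} (i) with Lemma \ref{L:pullpush} to transfer the unique pullback and pushout properties to the $n$th level, and then deduces part (2) directly from part (1) and Theorem \ref{T:cons}. Your explicit handling of the converse via $R^{1,1}=R$ and your check of the hypothesis $d(\mu)\wedge d(\nu)=\mathbf{0}$ are fine points the paper leaves implicit, but they do not change the argument.
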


\begin{proof}
(i) Suppose that $R$ is non-degenerate. By Corollary \ref{C:nondeg} (i), its associated $2$-graph $\bF_R^+$ has the unique pullback and pushout properties. 
It follows from Lemma \ref{L:pullpush} that the $n$th level $2$-graph $\bF_{R^{n,n}}^+$ has the unique pullback and pushout properties.  
Applying Corollary \ref{C:nondeg} (i) again gives the non-degeneracy of $R^{n,n}$. 

(ii)  This is straightforward from (i) and Theorem \ref{T:cons}. 
\end{proof}

\section{Homology and cohomology Theories for the YBE}

\label{S:hom}

In this last section, we investigate the homology and cohomology theories for the YBE. 
The homology theory for the YBE was first introduced and studied in \cite{CES04}, 
which has important applications to link invariants in knot theory \cite{CJKLS03, CES04}. 
We here give two notions of the homology for the YBE -- the YB-homology and the semigroup homology of the YBE. 
Likewise for the cohomology. Both notions are motivated from the interplay between the YBE and $k$-graphs studied above, and 
the ideas of Kumjian-Pask-Sims in \cite{KPS12, KPS}.

Our purpose of doing so is twofold: First of all, it makes the arguments in \cite{CES04} more transparent in a certain sense. 
For instances, the uniqueness result on colouring \cite[Lemma 2.3]{CES04}, which is vital in defining the homology of the YBE in \cite{CES04}, 
now can be easily proved from our viewpoint. Secondly, we hope that our point of view would be useful in further studies of the homology and cohomology
theories for the YBE.

\subsection{YB-Homology}

Let $R$ be a YBE solution on a set $X$.

\begin{defn}
\label{D:HomYBE}
Set $C_n(R)=\bZ X^n$. Define $\partial_n:C_n(R)\to C_{n-1}(R)$ by 
\begin{align*}
\partial_n(x_1,\ldots,x_n)=\sum_{i=1}^n(-1)^i\big(F_i^0(x_1,\ldots,x_n)-F_i^1(x_1,\ldots,x_n)  \big),
\end{align*}
where 
\begin{align*}
F_i^0(x_1,\ldots,x_n)&=R_{n-1\, n}\cdots R_{i,i+1}(x_1,\ldots,x_n) \ominus \text{the }n\text{th} \text{ component},\\
F_i^1(x_1,\ldots,x_n)&=R_{12}\cdots R_{i-1,i}(x_1,\ldots,x_n) \ominus \text{the }1\text{st} \text{ component}.
\end{align*}
Here $R_{ij}$ is the leg notation of $R$ on $X^n$, and $\ominus$ means ``deleting". 
\end{defn}

\begin{rem}
\label{R:hom}
It is useful to think of the operations $F_i^0$ and $F_i^1$ as follows.
The operation $F_i^0$ moves ``$x_i$"  from $(x_1,\ldots,x_n)$ to its right end by $R$  while also preserving the order of other elements,
and then deletes ``$x_i$":
\begin{align*}
(x_1,\ldots, x_n)\stackrel{R_{n-1\, n} \cdots R_{i,i+1}}\longrightarrow &(x_1,\ldots,x_{i-1},x'_{i+1},\ldots,x'_n,x'_i)\\
\stackrel{\text{delete }e'_{\fs_i}}\longrightarrow \  \ & (x_1,\ldots,x_{i-1},x'_{i+1},\ldots,x'_n)\\
=\quad\ &F_i^0(x_1,\ldots,x_n).
\end{align*}

The operation $F_i^1$ moves ``$x_i$"  from $(x_1,\ldots,x_n)$ to its left end by $R$ while also preserving the order of other elements,
and then deletes ``$x_i$":
\begin{align*}
(x_1,\ldots, x_n)\stackrel{R_{12} \cdots R_{i-1,i}}\longrightarrow &(x'_i,x'_1,\ldots,x'_{i-1},x_{i+1}\ldots,x_n)\\
\stackrel{\text{delete }x'_{i}}\longrightarrow \ & (x_1',\ldots,x_{i-1}',x_{i+1}\ldots,x_n)\\
=\quad &F_i^1(x_1,\ldots,x_n).
\end{align*}
\end{rem}

\begin{rem}
\label{R:2}
As we already mentioned above, the idea here is motivated from \cite{KPS12, KPS} by Kumjian-Pask-Sims.
As a matter of fact, let $\lambda=(x_1,\ldots,x_n)$. We can think the operations $F_i^0(\lambda)$ and $F_i^1(\lambda)$ in view of the associated $n$-graph $\bF_R^+$ to 
$R$ (cf. Theorem \ref{T:YBkGr} (iii))\footnote{Actually, one can obtain $F_i^0(\lambda)$ and $F_i^1(\lambda)$ from any $k$-graph ($k\ge n$) associated to $R$.}:
regard $x_i$ as an edge of degree of $\epsilon_i$, and 
so $\lambda$ is a path of degree $d(\lambda)=\epsilon_1+\cdots+\epsilon_n$ in $\Lambda$. Then $F^0_i(\lambda)$ and $F^1_i(\lambda)$ are 
unique paths (with the increasing subscript order) in $\bF_R^+$ such that  
\[
F_i^0(\lambda)\mu_i=\lambda=\nu_iF^1_i(\lambda)
\]
with $d(\mu_i)=d(\nu_i)=\epsilon_i$. 
So we really connect the YB-homology of $R$ with the homology of its ``$\infty$-graph" in the sense of \cite{KPS12, KPS}. 
\end{rem}

\begin{rem}
It is known from \cite{KPS} that, for the homology of a $k$-graph $\Lambda$, its $n$th boundary map is always trivial for every $n>k$. 
But this is not the case any more for the YB-homology from Remark \ref{R:2} above. 
\end{rem}

As in \cite{KPS12, KPS}, it is not hard to check that $(X_*,\partial_*)$ is a chain complex.  
Define the \textit{$n$th YB-homology group} of $(X,R)$ by 
\[
H_n^\text{YB}(R)=\ker\partial_n/\im\partial_{n+1}.
\]
We call $H_*^\text{YB}(R)$ the \textit{YB-homology} of $(X,R)$. 

For an abelian group $A$, let $C^n(X, A)$ be the set of homomorphisms from $C_n(R)$ to $A$. Define $\delta^n:C^n(X, A) \to C^{n+1}(X, A)$ by
\[
(\delta^n f)(x_1,\ldots, x_{n+1})=f\circ\partial_{n+1}(x_1,\ldots, x_{n+1}).
\]
Define the \textit{$n$th cohomology group} by 
\[
H^n_{\text{YB}}(R,A)=\ker\delta^n/\im\delta^{n-1}
\] 
and call $H^*_{\text{YB}}$ the \textit{YB-cohomology} of $(X,R)$.

\subsection{Semigroup Homology of the YBE}

Let $R$ be a YBE solution on $X$, and $G_R^+$ be its YB-semigroup. The \textit{semigroup homology and cohomology of $R$},
by definition, are the homology and cohomology of $G_R^+$, respectively: 
\begin{align*}
\ul{H}_n^\text{YB}(R)&:=H_n(G_R^+),\\
\ul{H}^n_{\text{YB}}(R,A)&:=H^n(G_R^+,A)
\end{align*}
for any ablian group $A$. 

%
%

Motivated by \cite{KPS12, KPS}, it would be very interesting to know the answer to the following question.

\begin{ques}
Do the YB-cohomology and semigroup cohomology of the YBE coincide?
\end{ques}

If the answer to the above question is affirmative, then $H^*_{\text{YB}}(R,A)=H^*(G_R^+,A)$.
So a non-classical topic becomes a very classical one. 
Furthermore, if $G_R$ is a group of fractions for $G_R^+$, then the situation is even better: $H^*_{\text{YB}}(R,A)=H^*(G_R,A)$.
In this case, we will have a lot of tools to apply in our hands. 


\subsection{An Applications to Derived-Type YBE Solutions}

For derived-type YBE solutions, $\partial_n$ (and so $\delta_n$) has particularly nice form. 
The YB-homology and YB-cohomology are closely related to the quandle homology and cohomology, respectively. 
In fact, a quandle is a (non-empty) set $X$ with a binary relation $*$ such that 
$
R(x,y)=(y, x*y)) 
$ 
($x,y\in X$)
is a square-free YBE solution on $X$, which is automatically of derived-type. 
Thus all known quandle homology and cohomology results, e.g., in \cite{CES04, CJKLS03}, can be applied to this special class of the YBE solutions.

Let 
\[
R(x,y)=(y,\beta_y(x))=:(y, x*y)
\] 
be a derived type YBE solution on $X$. So in this case one has
\begin{align*}
x_1\cdots x_n
&=x_1\cdots x_{i-1}x_{i+1}\ldots x_n\big((x_i*x_{i+1})*\cdots *x_n\big)\\
&=x_i(x_1*x_i)\cdots (x_{i-1}*x_i)x_{i+1}\cdots x_n.
\end{align*}
So 
\begin{align*}
&\partial_n(x_1\cdots x_n)\\
&=\sum_{i=2}^{n}(-1)^i\big(x_1\cdots x_{i-1}x_{i+1}\cdots x_n-(x_1*x_i)\cdots (x_{i-1}*x_i)x_{i+1}\cdots x_n\big).
\end{align*}
For example, it is easy to compute 
\begin{align*}
\partial_1(x)&=0,\\
\partial_2(x_1, x_2)&=x_1-x_1*x_2,\\
\partial_3(x_1,x_2,x_3)&=(x_1,x_3)-(x_1*x_2,x_3)-(x_1,x_2)+(x_1*x_3,x_2*x_3).
\end{align*}
Hence any 1-cocycle $\phi:X\to A$ is of the form 
\[
\phi(x_1)=\phi(x_1*x_2)\qforal x_1,x_2\in X,
\]
and any $2$-cocycle $\phi:X\times X\to A$ is of the form
\[
\phi(x_1,x_2)+\phi\big(x_1*x_2, x_3\big)=\phi(x_1,x_3)+\phi\big(x_1*x_3, x_2*x_3\big).
\]

If we let $O_\beta=\{\beta_y(x): x,y \in X\}$ be the orbit of the right action $\beta$, then the following consequence is immediate: 
$H_1^{YB}(R)$ is isomorphic to the orbit class of $\beta$, and $H^1_{YB}(R)$ is the set of functions $\phi: X\to A$ which are $\beta$-invariant.


\end{document}